\newenvironment{proof}{\paragraph{Proof:}}{\hfill$\square$}
\newtheorem{definition}{Definition}[section]
\newtheorem{theorem}[definition]{Theorem}
\newtheorem{corollary}[definition]{Corollary}
\newtheorem{remark}[definition]{Remark}
\newtheorem{proposition}[definition]{Proposition}
\newtheorem{lemma}[definition]{Lemma}
\numberwithin{equation}{section}
\title{Nonlocal critical exponent singular {problems under} mixed Dirichlet-Neumann boundary {conditions}}
\date{}
\begin{document}

\author[1]{Tuhina Mukherjee}
 \author[2]{Patrizia Pucci}
 \author[1]{Lovelesh Sharma}
 \affil[1]{\small Department of Mathematics,
Indian Institute of Technology Jodhpur}
\affil[2]{\small Dipartimento di Matematica e Informatica,
Universit\`a degli Studi di Perugia }
\providecommand{\keywords}[1]
{
  \small	
  \textbf{\textit{Keywords---}} #1
}
\maketitle
\begin{abstract}
\vspace{1cm}
 In this paper, we study the following singular problem,
 under mixed Dirichlet-Neumann boundary conditions, and  involving
the fractional Laplacian
 \begin{equation*} \label{1}
      \begin{cases}
      (-\Delta)^{s}u =  \lambda u^{-q} + u^{2^*_s-1}, \quad u>0 \quad
 \text{in }\Omega,\\
 \mathcal A(u) = 0 \quad \text{on}~ \partial\Omega =  \sum_{D} \cup \sum_{\mathcal{N}},
  \end{cases} \tag{$P_\lambda$}
 \end{equation*}
 where  $\Omega \subset \mathbb{R}^N$  is a bounded domain with smooth boundary $\partial{\Omega}$, $1/2<s<1$, $\lambda >0$ is a  real parameter, $ 0 < q < 1 $, $N>2s$, $2^*_s=2N/(N-2s)$  and
 $$\mathcal{A}(u)= u \mathcal{X}_{\sum_{D}} + {\partial_{\nu}u}\mathcal{X}_{ \sum_{\mathcal{N}}}, \quad{\partial_{\nu}=\frac{\partial }{\partial{\nu}}}.$$
 Here $\sum_{D}$, $\sum_{\mathcal{N}}$ are smooth $(N-1)$ dimensional
 submanifolds of $\partial \Omega$ such that $\sum_{D} \cup \sum_{\mathcal{N}}= \partial\Omega$, $\sum_{D} \cap \sum_{\mathcal{N}}=  \emptyset $ and $\sum_{D} \cap \overline{\sum_{\mathcal{N}}} = \tau'$ is a smooth $(N-2)$ dimensional submanifold of $\partial{\Omega}$. Within a suitable range of $\lambda$, we establish existence of at least two opposite energy solutions for \eqref{1} using the standard Nehari manifold technique.
\end{abstract}
\maketitle
\numberwithin{equation}{section}
\allowdisplaybreaks
\keywords {Fractional Laplacian, Mixed boundary conditions, Critical exponent, singular nonlinearity, Nehari manifold.}

\section{Introduction}
We investigate the existence and multiplicity of solutions to the following problem
\begin{equation*}\label{Main problem}
\begin{cases}
          (-\Delta)^{s}u =  \lambda u^{-q} + u^{2^*_s-1}, \quad u>0 \quad
 \text{in }\Omega,\\
 \mathcal{A}(u) = 0 \quad \text{on}~ \partial\Omega =  \sum_{D} \cup \sum_{\mathcal{N}},\end{cases}\tag{$P_\lambda$}
\end{equation*}
where $\lambda > 0$ is a real parameter, $2^*_s=\frac{2N}{N-2s}$, $N>2s$, $s\in (1/2,1)$, $q\in (0,1)$, $\Omega \subset \mathbb{R}^N$  is a bounded domain with smooth boundary $\partial{\Omega}$ and
the mixed Dirichlet-Neumann boundary conditions are given by
$$\mathcal{A}(u)= u \mathcal{X}_{\sum_{D}}
+ {\partial_\nu u}\mathcal{X}_{ \sum_{\mathcal{N}}},
\quad {\partial_{\nu}=\frac{\partial }{\partial{\nu}}}.$$
Here, $\nu$ is the outward unit normal vector to $\partial\Omega$ and $\mathcal{X}_{M}$ is  the indicator function of the set $M$. The boundary manifolds $\sum_{D}$ and {$\sum_{\mathcal{N}}$} are assumed to be such that
$\sum_{D}$, $\sum_{\mathcal{N}}$ are smooth $(N-1)$ dimensional submanifolds of $\partial \Omega$ and $\sum_{D}$ is a closed manifold of positive $(N-1)$ dimensional Lebesgue measure, where  $|\sum_{D}| = \gamma \in(0,| \partial{\Omega}|)$ and $\sum_{D} \cup \sum_{\mathcal{N}}= \partial \Omega$ , $\sum_{D} \cap \sum_{\mathcal{N}}=  \emptyset $ and  $\sum_{D} \cap \overline{\sum_{\mathcal{N}}} = \tau'$ is a smooth  $(N-2)$ dimensional
submanifold of $\partial{\Omega}.$ 
 The study of fractional Laplacian is a famous topic in mathematical analysis and probability. The application of this theory is the obstacle problem, which arises in a variety of fields such as population dynamics, chemical reactions in liquids, and image processing. Nonlinear problems driven by fractional Laplace operators form a common feature in many mathematical models  of biology and finance. Many articles on the study of obstacle problems focus on the fractional power of the Laplace operator which appears as the infinitesimal generator of the L\'{e}vy stable diffusion process, for instance, refer \cite{MR2064019,MR1918950,MR2178227,MR2480109} for more details.

Over the past few decades, mixed Dirichlet and Neumann boundary problems associated with elliptic equations
  \begin{equation}\label{Main equation} { -\Delta u = g(x,u)}  ~~\;\text{in}\;~~ \Omega,
 \end{equation}
 has been extensively studied. In particular, the study of wave phenomena, heat transfer, electrostatics, elasticity and hydrodynamics in physics and engineering often involves elliptic problems with mixed Dirichlet-Neumann boundary conditions. For example, one can see \cite{MR647124,MR1472351,MR2378088} and the references therein for motivation. For a detailed study on the Laplace operator with mixed Dirichlet-Neumann boundary conditions, we refer \cite{MR2100910, MR2238024,MR3912863, MR1837538, MR1971262}. Motivated by this, researchers have recently started looking at the non-local version of elliptic equations with mixed boundary conditions but there is only limited literature available so far, such as \cite{MR3912863, MR4065090, MR2078192, MR4319042, MR4343111}. 
 The non local fractional Laplace operator, for $s\in (0,1)$, has been defined through spectral decomposition, that is 
\[(-\Delta)^{s} v =\sum_{j\geq{1}} \lambda^s_{j}\langle v,\phi_{j}\rangle \phi_{j},\]
where $(\lambda_j,\phi_j)$ are eigenvalue-eigen function pair of $(-\Delta)$ with necessary boundary conditions (see Section $2$) as well as through singular integrals, that is 
\[{(- \Delta)^{s}u(x)} = C(N,s)~ P.V. \int_{\mathbb{R}^N} {\dfrac{u(x)-u(y)}{|x-y|^{N+2s}}} ~ dy\]
in literature. Although stated with same name, these operators are in general different, we refer \cite{MR3233760} for details. In \cite{MR3912863}, authors reconsidered the celebrated  Brezis-Nirenberg problem  utilizing the spectral fractional Laplace operator under mixed Dirichlet-Neumann boundary conditions, precisely
\begin{equation}\label{BN}
\begin{cases}
          (-\Delta)^{s}u =  \lambda u + u^{2^*_s-1}, \quad u>0 \quad
 \text{in }\Omega,\\
 \mathcal{A}(u) = 0 \quad \text{on}~ \partial\Omega =  \sum_{D} \cup \sum_{\mathcal{N}},\end{cases}
\end{equation}
where all parameters and notations means same as in \eqref{1}. They established parallel results as in Brezis-Nirenberg original article with a striking difference that \eqref{BN} has a solution when $\lambda =0$ contrasting the Dirichlet case.
Authors in \cite{MR4319042} proves that any solution of
\begin{equation}\label{sm1}
\begin{cases}
          (-\Delta)^{s}u =  f\quad
 \text{in }\Omega,\\
 \mathcal{A}(u) = 0 \quad \text{on}~ \partial\Omega =  \sum_{D} \cup \sum_{\mathcal{N}},\end{cases}
\end{equation}
is in $C^\gamma(\bar{\Omega})$ for $\gamma \in (0,\frac12)$, when $f\in L^p(\Omega)$, $p>\frac{N}{2s}$, via De Giorgi's technique.
The essential strong maximum principles for problems of the type \eqref{sm1} have been established in
 \cite{MR4065090} and \cite{MR4343111} for fractional Laplacian defined via singular integrals and spectral decomposition, respectively.

 Now, we head towards importance of elliptic problems consisting singular nonlinearity or blow-up nonlinearity and critical exponent term, especially of the type
 \begin{equation}\label{singular}
\begin{cases}
          -\Delta u =  \lambda u^{-q}+ u^r,\quad u>0\quad
 \text{in }\Omega,\\
u = 0 \quad \text{on}~ \partial\Omega,\end{cases}
\end{equation}
 where $q>0$ and $r\in (1,\frac{N+2}{N-2}]$. The article by Crandall, Rabinowitz and Tartar was the starting point of such singular problems, \cite{MR427826}.  Due to presence of singular term, the associated energy functional is no longer $C^1$, therefore the standard critical point theory fails which makes such problems very interesting. For $q\in (0,1)$, Haitao et al. in \cite{MR1964476} studied \eqref{singular} with the sub-super solution techniques whereas authors in \cite{MR2099611} studied the same problem \eqref{singular} via Nehari manifold technique. Both articles establish existence of at least two solutions and regularity results. The article \cite{MR2446183} deals with the delicate $q\geq1$ case, where again existence of at least two weak solutions were established using the advanced theory of nonsmooth analysis. The nonlocal extension of these problems, that is \eqref{singular} in presence of fractional Laplacian {defined via singular integrals}, has been studied in \cite{MR4018284, MR3466525, MR3356049,MR3680366}.
Different types of singular elliptic problems {involving Laplace operator} under mixed Dirichlet-Neumann boundary conditions can be found in \cite{MR2100910, MR1421215}.

With all the above literature as motivation, we are inquisitive about singular critical exponent problems with fractional Laplacian {defined via spectral decomposition} under mixed boundary conditions. We use Nehari manifold set and Gat\'{e}aux differentiability of energy functional(since $q\in (0,1)$) efficiently to tackle the singular power term.  Our problem lacks compactness in sense of Sobolev embedding due to an additional critical exponent term which makes \eqref{1} difficult as well as interesting. To overcome this, we seek the help of the Brezis-Lieb lemma and for establishing existence of second solution, we needed sharp estimates of minimizers of best constant in the inequality \eqref{sobolev-ineq}, given by \eqref{talenti}. Lemma \ref{new-1} is most important in this connection. Using these tools, we prove that \eqref{1} admits at least two weak solutions and our solutions are as regular as $L^\infty(\Omega)$. To the best of our findings, there is no literature dealing with multiplicity results to problems of the type \eqref{1}, so our article is  a lead to singular nonlocal problems in presence of mixed boundary conditions. We, further, remark that our work extends the existing  work of Mukherjee and Sreenadh \cite{MR3466525}, which was studied with fractional Laplacian through singular integral, in the Dirichlet-Neumann mixed boundary condition case. But we remark that our technique shall go well for \eqref{1} with singular integral definition of the fractional Laplacian under the functional setting adapted from \cite{MR3809115}.
We shall be interested to look upon \eqref{1} for $q>1$ case as a future direction.\\

The article is structured as follows: Section 2 presents the functional framework suitable for the spectral fractional Laplace operator with mixed boundary conditions. In this section, we also review the Caffarelli and Silvestre extension technique \cite{MR2354493}, which offers an alternative definition of the fractional Laplace operator using an auxiliary problem. In section 3 and section 4, we analyze fiber maps and Nehari manifold to prove existence of minimizers over suitable subsets of it. Section 5 is devoted to the main result where we show that the infimum of the energy functional over $\mathcal{N}_{\lambda}^{+}$ and  $\mathcal{N}_{\lambda}^{-}$ are achieved and are our desired weak solutions. The regularity  results have been proved in Section 6.

\section{Functional setting and Preliminaries}
\vspace{.5cm}
For the fractional Laplace operator, we recall its definition via spectral decomposition.  Let $(\phi_{j}, \lambda_{j})_j$ {denote} the eigenfunctions (standardised with respect to the $L^2(\Omega)$ norm) and the eigenvalues of $(-\Delta)$ with homogeneous mixed Dirichlet-Neumann boundary data.
Then, the pairs $(\phi_{j}, \lambda_{j}^{s})_j$ are the eigenfunctions and the corresponding eigenvalues of  the fractional Laplace operator $(-\Delta)^{s}$, with the same boundary conditions. Consequently, if
the functions $v_{j}$, $j=1,2$, are given by  $v_{j}(x)= \sum_{i\geq{1}} \langle v_{j},\phi_{i}\rangle \phi_{i}$, then we define
$$ \langle (-\Delta)^{s} v_1,v_2\rangle =\sum_{j\geq{1}} \lambda^s_{j}\langle v_1,\phi_j\rangle \langle v_2,\phi_j \rangle.$$
As a direct result of the preceding definition, we obtain
\begin{equation*}
(-\Delta)^{s} v_1 =\sum_{j\geq{1}} \lambda^s_{j}\langle v_1,\phi_{j}\rangle \phi_{j}.
\end{equation*}
In fact, we consider the spectral decomposition of any smooth function as
$$v=\sum_{j\geq{1}} b_{j}\phi_{j},\mbox{ where },b_{j}=\langle v,\phi_{j}\rangle \in \ell^2$$
and define the following norm
$${\|v\|_{H^{s}(\Omega)} = \left(\sum_{j\geq{1}}b^{2}_{j}{\lambda^{s}_{j}}\right)^{1/2}}<\infty.$$
Now we consider the Sobolev space ${H^{s}_{\sum_{D}}(\Omega)}$ which is defined as
$${H^{s}_{\sum_{D}}(\Omega)}= \overline{C^{\infty}_{0}(\Omega \cup~{\mathcal{N}})}^{\|.\|_{{H}^{s}(\Omega)}}$$
equipped with the norm
$$\|v\|_{H^{s}_{\sum_{D}}(\Omega)}=\|(-{\Delta)^{\frac{s}{2}}{v}\|_{L^{2}(\Omega)}} ,$$
The range $s \in (\frac{1}{2},1)$ is the appropriate range for mixed boundary problems due to the natural embedding of the associated functional space, see the following remark.
\begin{remark} \label{re1.1}
According to \cite{MR3393253}, it has already been emphasised that if $s\in (0,{1/2}]$ then $H^{s}_{0}(\Omega) = H ^{s}(\Omega)$ and consequently, $H^{s}_{\sum_{D}}(\Omega) =H ^{s}(\Omega)$, whereas for $s\in (1/2,1)$,  $H^{s}_{0}(\Omega) \subsetneq H^{s}(\Omega)$. So as a result, if $s\in(1/2,1)$, we obtain $H^{s}_{\sum_{D}}(\Omega) \subsetneq H^{s}(\Omega)$ which serves as an appropriate functional space to study the mixed boundary problem.\\
\end{remark}
 \begin{definition}
We say that a positive function $u\in H^{s}_{\sum_{D}}(\Omega)$ is a solution of \eqref{Main problem}  if
\begin{equation*}\label{Pl1}
    \int_{\Omega} (-\Delta)^\frac{s}{2} u (-\Delta)^\frac{s}{2} \phi dx = \int_{\Omega} (\lambda u^{-q}+ u^{2^*_s-1}) \phi dx
\end{equation*}
for all $\phi \in C^{\infty}_{0}(\Omega)$.
\end{definition}

By the preceding definition, we associate {to problem \eqref{1} the
corresponding} energy functional
$I : H^{s}_{\sum_{D}}(\Omega) \to (-\infty, \infty]$,   defined as
 \begin{equation*}\label{Pl}
I(u) = \frac12 \int_ {\Omega} |(-\Delta)^{\frac{s}{2} }u|^{2}  dx
- \lambda \int_\Omega  \frac{|u|^{1-q}}{1-q} dx
- \frac{1}{2^*_s} \int_\Omega |u|^{2^*_s}dx.
\end{equation*}
This functional $I$ is well defined in the space
$H^{s}_{\sum_{D}}(\Omega)$having critical points that correspond to solutions of \eqref{1}.

Our proof arguments depend on the techniques provided by Caffarelli and Silvestre in \cite{MR2354493} along with \cite{MR3023003, MR2646117}, in order to get a similar definition of  $(-\Delta)^{s}$ via an auxiliary problem that we
shall introduce below.

Given a domain $\Omega$, we denote the extension cylinder as $\mathcal{C}_{\Omega} = \Omega\times (0,\infty) \subset  \mathbb{R}^{N+1}_{+}$ and its lateral boundary as {$\partial_{L}\mathcal{C}_{\Omega} = {\partial {\Omega}} \times [0,\infty)$}.
 For a function $u \in H^{s}_{\sum_{D}}(\Omega)$,  its $s$-extension
 $\mathcal{E}_{s}: H^{s}_{\sum_{D}}(\Omega) \hookrightarrow \mathcal X^{s}_{\sum_{D}}(\mathcal{C}_{\Omega})$ is given by
 $$\mathcal{E}_{s}[u]= w,\quad u \in H^{s}_{\sum_{D}}(\Omega),$$
 where $w$ defined in the cylinder $\mathcal{C}_{\Omega}$ is the solution of the problem
\begin{equation}\label{ext-op}
\begin{cases}-\mbox{div}(y^{1-2s}\nabla w) = 0,&\mbox{in }\mathcal{C}_{\Omega},\\
\mathcal{A}^{*}(w) = 0,&\mbox{on }\partial_{L}\mathcal{C}_{\Omega},\\
w(x,0) = u(x),&\mbox{in }\Omega \times \{y=0 \},
\end{cases}\end{equation}
and here
$$\mathcal{A}^{*}(w)= w \mathcal{X}_{\sum^{*}_{D}}
+ \partial_\nu w \mathcal{X}_{ \sum^{*}_{\mathcal{N}}},$$
with {${\sum^{*}_{D}}={\sum_{D}}\times[0,{\infty})$~,~${\sum^{*}_\mathcal{N}}={\sum_\mathcal{N}}\times[0,{\infty})$. We use $\nu$ to denote the outward normal to $\partial \Omega$ as well as $\partial_L\mathcal{C}_\Omega$, with abuse of notation. But we remark that if $\nu$ denotes the outwards normal vector to $\partial \Omega$ and $\nu(x,y)$ as the outwards normal vector to $\partial \mathcal{C}_\Omega$ then, by
construction, $\nu(x,y)=(\nu,0), y>0$, refer \cite{MR2354493}.} The extension function $w$ belongs to the space
$ \mathcal X^{s}_{\sum_{D}}(\mathcal{C}_{\Omega},dxdy) = \overline{C^{\infty}_{0}((\Omega \cup{\sum_{\mathcal{N}}}~)\times[0,\infty)})^{\|.\|_{\mathcal{X}^{s}_{\sum_{D}}}(\mathcal{C}_{\Omega},dxdy)}$
which is a Hilbert space, with respect to the Hilbertian norm
\begin{equation}\label{norm-def}
\|f\|_{\mathcal{X}^{s}_{\sum_{D}} (\mathcal{C}_{\Omega},dxdy)} =
\left(\kappa_{s} \int_{\mathcal{C}_{\Omega}}y^{1-2s} |\nabla f(x,y)|^{2} dx dy
\right)^{1/2},
\end{equation}
where $\kappa_{s}$ is {a} suitable constant
(see for exact value of $\kappa_{s}$ in \cite{MR3023003}).

The spaces $H^{s}_{\sum_{D}}(\Omega)$ and $\mathcal{X}^{s}_{\sum_{D}}(\mathcal{C}_\Omega,dxdy)$ are isometric through the extension operator {$\mathcal{E}_{s}$, that is}
\begin{equation} \label{EO}
    \|\mathcal{E}_{s}[\phi]\|_{\mathcal{X}^{s}_{\sum_{D}}(C_\Omega,dxdy)}= \|\phi\|_{\mathcal{H}^{s}_{\sum_{D}}(\Omega)}
\end{equation}
for all $\phi \in {H}^{s}_{\sum_{D}}(\Omega)$.
The important feature of the extension function {$w$} is how it is connected to the original function $u$ via the fractional Laplacian by the formula
\[{\partial_{\nu^s} w}:= -\kappa_{s}  \lim_{y\to 0^{+}} y^{1-2s}
{\partial_y} w= (-\Delta)^{s}{u}.\]

\begin{remark}
For any $w\in \mathcal{X}^{s}_{\sum_{D}}(\mathcal{C}_\Omega,dxdy)$, from \cite{MR2354493,MR3023003} it is evident that trace of $w$ on $\partial \mathcal{C}_\Omega$ is well defined, since $s>1/2$. Moreover for any $\psi \in C^{\infty}_{0}((\Omega \cup{\sum_{\mathcal{N}}}~)\times[0,\infty))$, the following Green's type formula holds-
\begin{equation}\label{new1}
    -\int_{\mathcal{C}_\Omega} \psi~ div (y^{1-2s}\nabla w)~dxdy=
    \int_{\mathcal{C}_\Omega} y^{1-2s}\nabla w.\nabla \psi~dxdx - \int_{\partial\mathcal{C}_\Omega} \psi \left(y^{1-2s}~\frac{\partial w}{\partial \eta}\right)~dS.
\end{equation}
Thus $\langle \partial w/\partial \eta, \psi\rangle_{\partial \mathcal{C}_\Omega}$ can be defined in the spirit of \eqref{new1}, where $\langle .,\rangle_{\mathcal{C}_\Omega}$ denotes the duality pairing between $\mathcal{X}^{s}_{\sum_{D}}(\mathcal{C}_\Omega,dxdy)$ and $\big(\mathcal{X}^{s}_{\sum_{D}}(\mathcal{C}_\Omega,dxdy)\big)^*$. Now we can give sense to $\partial v /\partial \eta$ on $\partial \Omega$, for any $v\in H^s_{\sum_D}(\Omega)$ using
\[\frac{\partial v}{\partial \eta}{\bigg\vert}_{\partial \Omega}:= \frac{\partial (\mathcal{E}_s[v])}{\partial \eta}{\bigg\vert}_{\partial \mathcal{C}_\Omega}\]
which is well defined. For more details, we refer \cite{antil,lions2012non,MR2772469}.
\end{remark}

We recall that $N > 2s$ and the trace inequality proved in {\cite{MR4319042}}.
For any $r$, with $1\leq r\leq 2^*_s$, there exists a
constant $C=C(N,s, r,\Omega)>0$ such that
\begin{equation}\label{sobolev-ineq}
\int_{\mathcal{C}_{\Omega}}y^{1-2s} |\nabla f(x,y)|^{2} dx dy\geq C \Big(\int_{\Omega} |f(x,0)|^r dx \Big)^{2/r}
\end{equation}
for  any $f \in {\mathcal{X}^{s}_{0}(\mathcal{C}_\Omega,dxdy)}$.

When $r=2^*_s$,  the best constant in \eqref{sobolev-ineq}  is denoted by $S(s,N)$ and is independent of the domain $\Omega$. The exact value of $S(s,N)$ is given by
\[S(s,N)= \frac{2\pi^s\Gamma(1-s)\Gamma(\frac{N+2s}{2})(\Gamma(\frac{N}{2}))^{\frac{2s}{N}} }{\Gamma(s)\Gamma(\frac{N-2s}{2})(\Gamma(N))^s}\]
and $S(s,N)$ is not achieved in any bounded domain. Whereas in the whole space
$\mathbb R^N$ the inequality \eqref{sobolev-ineq} becomes
\begin{equation}\label{sobolev-ineq1}
\int_{\mathbb R^{N+1}_+}y^{1-2s} |\nabla f(x,y)|^{2} dx dy\geq S(s,N) \Big(\int_{\mathbb R^N} |f(x,0)|^{2^*_s} dx \Big)^{2/2^*_s}
\end{equation}
for any $f\in \mathcal{X}^{s}(\mathbb R^{N+1}_+):= {\overline{ C^\infty_0(\mathbb R^N\times (0,\infty))}^{\|\cdot\|_{\mathcal{X}^{s}(\mathbb R^{N+1}_+)}} }$, where $\|\cdot\|_{\mathcal{X}^{s}(\mathbb R^{N+1}_+)}$ is defined as in \eqref{norm-def} replacing $\mathcal C_{\Omega}$ by $\mathbb R^{N+1}_+$. The inequality
\eqref{sobolev-ineq1} is an equality for
$z= \mathcal E_s[u]$, where $u$ is given by
\begin{equation}\label{talenti}
   u(x)= u_{\epsilon}(x)= \frac{\epsilon^{\frac{N-2s}{2}}} (\epsilon^2 + |x|^2)^{\frac{N-2s}{2}},~~ x\in \mathbb{R}^N,
\end{equation}
with any arbitrary $\epsilon>0$. {We refer to} \cite{MR3023003,MR3912863} for more details.

\begin{remark}  If $s\in(\frac12,1)$ then following inclusions hold
\begin{equation}\label{inc}
\mathcal{X}^{s}_{0}(\mathcal{C}_{\Omega},dxdy)\subset \mathcal{X}^{s}_{\sum_{D}} (\mathcal{C}_{\Omega},dxdy)\subsetneq \mathcal{X}^{s} (\mathcal{C}_{\Omega},dxdy)
\end{equation}
i.e. the space  $\mathcal{X}^{s}_{0}(\mathcal{C}_{\Omega},dxdy)$ is subspace of $\mathcal{X}^{s} (\mathcal{C}_{\Omega},dxdy)$ and its  functions also vanish
on the lateral boundary $\partial_{L}\mathcal{C}_{\Omega}$ of $\mathcal{C}_{\Omega}$.
More specifically,
$$\mathcal{X}^{s}_{0}(\mathcal{C}_{\Omega}{,dxdy})= \bigg\{f\in L^2(\mathcal{C}_{\Omega}) : f=0 \mbox{ a.e. on }\partial_{L} \mathcal{C}_{\Omega}
\mbox and  \int_{\mathcal{C}_{\Omega}}y^{1-2s} |\nabla f(x,y)|^{2} dxdy<\infty\bigg\}.$$
\end{remark}

In the spirit of \eqref{ext-op}, we reformulate the original problem \eqref{1} in terms of the extension problem $(P_\lambda^*)$ below
\begin{equation*} \label{p}
\begin{cases}-\mbox{div}(y^{1-2s}{ \nabla }w) = 0,
&\mbox{in }\mathcal{C}_{\Omega},\\
\mathcal{A}^{*}(w) = 0,&\mbox{on }\partial_{L} \mathcal{C}_{\Omega},\\
{\partial_{\nu^s}w} = \lambda w^{-q}+ w^{2^*_s -1},\quad w>0,
&\mbox{in } \Omega \times \{y=0 \},
\end{cases}
\tag{$P^{*}_\lambda$}
\end{equation*}

\begin{definition}
 A function $w \in \mathcal{X}^{s}_{\sum_{D}}(\mathcal{C}_\Omega,dxdy)$, with $w>0$ in $ \Omega \times \{y=0 \}$,  is said to be a weak solution of  \eqref{p} {\rm if
\[\kappa_{s} \int_{\mathcal{C}_{\Omega}} y^{1-2s} \langle{\nabla w(x,y), \nabla \phi (x,y)} \rangle \,dxdy
 = \int_\Omega \big(\lambda  w(x,0)^{-q}  + w(x,0)^{2^*_s-1}\big)\phi(x,0) dx\]
 for all $\phi \in C^{\infty}_{0}(\mathcal{C}_\Omega{,dxdy})$.}
\end{definition}
{If $w \in \mathcal{X}^{s}_{\sum_{D}}{(\mathcal{C}_\Omega,dxdy)}$ is an energy or a weak solution of the problem $(P^{*}_{\lambda})$, then $u(x) = Tr[w](x) = w(x,0)$ belongs to the space  ${H}^{s}_{\sum_{D}}(\Omega)$ and is a weak solution of} problem \eqref{1}. {Also the} vice versa is true. That is
the $s$-extension $w = \mathcal{E}_{s}[u] \in \mathcal{X}^{s}_{\sum_{D}}(C_\Omega,dxdy)$ of any solution $u \in {{H}^{s}_{\sum_{D}}(\Omega)}$ of \eqref{1} is a solution of  $(P^{*}_{\lambda})$. Hence, both formulations of  problems $(P_\lambda)$ and $(P_\lambda^*)$
are equivalent.
Now, the energy functional associated to the problem $(P^{*}_{\lambda})$
is
\[
 J(w) = \frac{\kappa_{s}}{2} \int_{\mathcal{C}_{\Omega}} y^{1-2s} |\nabla w(x,y)|^2dxdy
- \frac{\lambda}{1-q} \int_\Omega |w(x,0)|^{1-q} dx
- \frac{1}{2^*_s} \int_\Omega |w(x,0)|^{2^*_s}dx,
\]
well defined for all $w\in~ \mathcal{X}^{s}_{\sum_{D}}(\mathcal{C}_{\Omega},dxdy)$.

The next theorems are main results of {the} paper.

\begin{theorem}\label{2}
 There exists a threshold $\Lambda>0$
 such that problem \eqref{1} possesses at least two weak solutions $u_\lambda^*$
and $v_\lambda^*$ in ${H}^{s}_{\sum_{D}}(\Omega)$, whenever $\lambda \in (0, \Lambda)$.
\end{theorem}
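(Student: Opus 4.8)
The plan is to work with the equivalent extension formulation $(P_\lambda^*)$ and its energy functional $J$ on $\mathcal{X}^{s}_{\sum_{D}}(\mathcal{C}_\Omega,dxdy)$, using that for $0<q<1$ the singular term $\int_\Omega |w(x,0)|^{1-q}\,dx$ is continuous and $J$ is Gât\'eaux differentiable along nonnegative directions, even though it is not $C^1$. For each $w$ with $w(\cdot,0)$ nonnegative and not identically zero I would analyse the fiber map $\phi_w(t)=J(tw)$, $t>0$; because of the ordering of the homogeneities $1-q<2<2^*_s$, $\phi_w$ starts at $0$, is negative for small $t$, and tends to $-\infty$, so that $\phi_w$ has at most two critical points $t^+(w)<t^-(w)$, the first a local minimum with $\phi_w(t^+(w))<0$ and the second a local maximum. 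This yields the decomposition of the Nehari set $\mathcal{N}_\lambda=\mathcal{N}_\lambda^{+}\cup\mathcal{N}_\lambda^{0}\cup\mathcal{N}_\lambda^{-}$ according to the sign of $\phi_w''(1)$, and a standard scaling computation produces $\Lambda_0>0$ such that $\mathcal{N}_\lambda^{0}=\emptyset$ for $\lambda\in(0,\Lambda_0)$; hence for such $\lambda$ the sets $\mathcal{N}_\lambda^{\pm}$ are nonempty, disjoint, bounded away from $0$, and, using \eqref{sobolev-ineq}, $J$ is coercive and bounded below on each of them.

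Next I would obtain the first solution by minimising $J$ over $\mathcal{N}_\lambda^{+}$. One checks $c^{+}:=\inf_{\mathcal{N}_\lambda^{+}}J<0$, takes a minimising sequence and extracts a weakly convergent subsequence; on $\mathcal{N}_\lambda^{+}$ the critical contribution is controlled, so the subcritical and singular terms pass to the limit and the weak limit $w_\lambda^{*}$ lies in $\mathcal{N}_\lambda^{+}$ and realises $c^{+}$. A now-classical argument (testing with the projection onto $\mathcal{N}_\lambda$ of $w_\lambda^{*}+\varepsilon\phi$ for $\phi\ge 0$ and letting $\varepsilon\to 0^{+}$, which exploits precisely $q<1$) then shows that $w_\lambda^{*}$ is a nonnegative weak solution of $(P_\lambda^*)$, so $u_\lambda^{*}=w_\lambda^{*}(\cdot,0)\in H^{s}_{\sum_{D}}(\Omega)$ solves \eqref{1}, with $u_\lambda^{*}>0$ by the strong maximum principle for mixed problems cited in the introduction.

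The delicate part is the second solution on $\mathcal{N}_\lambda^{-}$, where the critical exponent destroys compactness of the Sobolev trace embedding. Setting $c^{-}:=\inf_{\mathcal{N}_\lambda^{-}}J$, I would prove the strict energy estimate
$$c^{-} < c^{+} + \frac{s}{N}\,S(s,N)^{N/2s},$$
which is the threshold below which any Palais--Smale sequence for $J$ at that level is relatively compact, the non-compact quantum being identified via the Brezis--Lieb lemma together with a concentration analysis of the critical term. To verify the inequality I would test along $w_\lambda^{*}+t\,\mathcal{E}_s[u_\epsilon]$, where $u_\epsilon$ is the Talenti bubble \eqref{talenti} concentrated at an interior point of $\Omega$ (away from $\partial\Omega$ and from $\tau'$), project this family onto $\mathcal{N}_\lambda^{-}$, and insert the sharp asymptotic expansions of $\|\mathcal{E}_s[u_\epsilon]\|^{2}$, of $\int_\Omega u_\epsilon^{2^*_s}$ and of the cross terms with $w_\lambda^{*}$ — this is exactly where Lemma \ref{new-1} is used — to conclude that for $\epsilon$ small the energy stays strictly below $c^{+}+\frac{s}{N}S(s,N)^{N/2s}$, the singular term entering only as a favourable lower-order contribution since $1-q<2$.

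Granting the estimate, a minimising sequence for $c^{-}$ is a bounded Palais--Smale sequence for $J$ constrained to $\mathcal{N}_\lambda^{-}$, hence, below the threshold, admits a strongly convergent subsequence whose limit $w_\lambda^{**}\in\mathcal{N}_\lambda^{-}$ attains $c^{-}$; the same perturbation argument as for $w_\lambda^{*}$ upgrades it to a weak solution $v_\lambda^{*}$ of \eqref{1}, positive by the maximum principle. Since $c^{-}>0>c^{+}$, the solutions $u_\lambda^{*}$ and $v_\lambda^{*}$ are distinct and of opposite energy sign, and one takes $\Lambda$ to be the minimum of $\Lambda_0$ and of the constant forced by the energy estimate. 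I expect the strict inequality together with the attendant compactness analysis of $\mathcal{N}_\lambda^{-}$ to be the main obstacle; the remaining steps are the routine Nehari machinery, carefully adapted to the singular nonlinearity through Gât\'eaux differentiability.
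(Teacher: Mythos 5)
Your proposal follows essentially the same route as the paper: a Nehari decomposition $\mathcal{N}_\lambda=\mathcal{N}_\lambda^{+}\cup\mathcal{N}_\lambda^{0}\cup\mathcal{N}_\lambda^{-}$ with $\mathcal{N}_\lambda^{0}$ trivial for small $\lambda$, Gât\'eaux-type perturbation arguments to show minimizers are weak solutions of the extension problem, Brezis--Lieb to handle the loss of compactness, and the strict energy gap $\inf J(\mathcal{N}_\lambda^{-})<J(u_\lambda)+\frac{s}{N}S(s,N)^{N/2s}$ obtained by sliding a truncated Talenti bubble along $u_\lambda$ — exactly the paper's Lemmas \ref{5.1}--\ref{lem5.2} and Propositions \ref{u-lambda}--\ref{prop5.4}. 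The one place you gloss a bit is the minimization on $\mathcal{N}_\lambda^{+}$: the paper does not simply "pass the critical term to the limit" but runs a three-case concentration argument whose Case (ii) contradiction needs the extra requirement built into $\Lambda$ that $\sup\{\|w\|_{\mathcal{X}^{s}_{\sum_{D}}(\mathcal{C}_\Omega,dxdy)}^{2}:w\in\mathcal{N}_\lambda^{+}\}\le(2/2^*_s)^{2/(2^*_s-2)}S(s,N)^{2^*_s/(2^*_s-2)}$; your informal claim that this is "controlled" is where that constant quietly enters.
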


\begin{theorem}\label{3}
Any weak solution of \eqref{1} belongs to $L^\infty(\Omega)$.
\end{theorem}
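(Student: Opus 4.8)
\textbf{Proof proposal for Theorem \ref{3}.}

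The plan is to show that any weak solution $u \in H^s_{\sum_D}(\Omega)$ of \eqref{1}, equivalently its extension $w = \mathcal{E}_s[u] \in \mathcal{X}^s_{\sum_D}(\mathcal{C}_\Omega, dxdy)$ solving $(P^*_\lambda)$, is in $L^\infty(\Omega)$ by a Moser/Stampacchia-type bootstrap on the extension problem, exploiting the fact that the singular term actually \emph{helps}. First I would observe that since $u > 0$ and $0 < q < 1$, on the set $\{u \geq 1\}$ we have $u^{-q} \leq 1$, while on $\{0 < u < 1\}$ the contribution $\lambda u^{-q}$ to the right-hand side, though unbounded, is harmless for an $L^\infty$ bound \emph{from above}: any a priori bound only needs to control the region where $u$ is large, and there $\lambda u^{-q} \leq \lambda$. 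Concretely, I would write the right-hand side as $\lambda u^{-q} + u^{2^*_s - 1} \leq \lambda + u^{2^*_s - 1}$ pointwise on $\{u \geq 1\}$ and feed this into the standard iteration; the genuinely dangerous term is the critical power $u^{2^*_s - 1}$, exactly as in the Brezis--Nirenberg setting for \eqref{BN}.

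The core step is a De Giorgi--Moser iteration adapted to the weighted extension operator $-\mathrm{div}(y^{1-2s}\nabla w)$ with the mixed boundary condition $\mathcal{A}^*(w) = 0$ on $\partial_L \mathcal{C}_\Omega$. For $\beta \geq 1$ and $L > 0$, I would use the truncated test function $\phi = w\, w_L^{2(\beta - 1)}$ (with $w_L = \min\{w, L\}$), which lies in $\mathcal{X}^s_{\sum_D}(\mathcal{C}_\Omega, dxdy)$ and is admissible since it vanishes on $\sum^*_D$. Plugging $\phi$ into the weak formulation and using the Green-type identity \eqref{new1} converts the bulk term into $\int_{\mathcal{C}_\Omega} y^{1-2s}|\nabla(w w_L^{\beta-1})|^2$ up to a harmless constant depending on $\beta$; the Neumann portion of the boundary contributes nothing, and the trace on $\Omega \times \{y=0\}$ produces $\int_\Omega (\lambda u^{-q} + u^{2^*_s-1}) u\, u_L^{2(\beta-1)}\,dx$. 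Then I would apply the trace--Sobolev inequality \eqref{sobolev-ineq} to the left side and split $u^{2^*_s} u_L^{2(\beta-1)} = u^{2^*_s - 2}\,(u u_L^{\beta-1})^2$ on the right, using Hölder with the critical exponent together with the crucial fact that $u \in L^{2^*_s}(\Omega)$ (so on a set where $u$ is large, $\|u^{2^*_s-2}\|_{L^{N/2s}}$ on that set is small): this absorbs the critical term for the first iteration and launches the standard $L^{p_k} \to L^{p_{k+1}}$ bootstrap with $p_{k+1} = \frac{2^*_s}{2} p_k$, yielding $u \in L^p(\Omega)$ for all $p < \infty$ and finally, letting $L \to \infty$ via Fatou and iterating, $u \in L^\infty(\Omega)$. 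One may also simply cite the De Giorgi-type regularity result of \cite{MR4319042} for $(-\Delta)^s u = f$ with mixed data once one has bootstrapped $f = \lambda u^{-q} + u^{2^*_s-1}$ into $L^p(\Omega)$ for some $p > N/2s$.

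The main obstacle is the interplay between the \emph{singular} term near $\partial\Omega$ (and near the zero set of $u$, which a priori could touch the boundary or the interface $\tau'$) and the \emph{mixed} boundary condition, since the De Giorgi machinery of \cite{MR4319042} was developed for $f \in L^p$ and one must verify that $\lambda u^{-q}$ does not destroy integrability: here the sign $u > 0$ and $q < 1$ are essential, because $u^{-q} \leq 1 + u^{-q}\mathcal{X}_{\{u<1\}}$ and one checks $u^{-q}\mathcal{X}_{\{u<1\}} \in L^r(\Omega)$ for suitable $r$ using $u \in H^s_{\sum_D}(\Omega) \hookrightarrow L^{2^*_s}(\Omega)$ — but this only gives $L^r$ for $r$ limited by $q$, so for $q$ close to $1$ one may not immediately reach $p > N/2s$. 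I would handle this by first running the Moser iteration \emph{above} level $1$ (where the singular term is bounded by $\lambda$) to get $u \in L^\infty_{loc}(\{u \geq 1\}) $, i.e.\ $u$ bounded wherever it is not small, which combined with $u \in L^{2^*_s}$ already gives global $L^\infty$ since $u$ small is automatically bounded; this sidesteps the integrability issue of $u^{-q}$ entirely. The remaining routine points — admissibility of truncated test functions in the mixed-boundary extension space, and the harmless $\beta$-dependent constants — follow the template in \cite{MR3466525, MR3912863}.
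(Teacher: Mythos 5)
Your proposal matches the paper's strategy quite closely. The paper proves Theorem \ref{3} via the extension problem $(P^*_\lambda)$: Lemma \ref{lem6.1} first upgrades the weak formulation to all test functions in $\mathcal{X}^{s}_{\sum_{D}}(\mathcal{C}_\Omega,dxdy)$ (not just $C^\infty_c$), using the monotone approximation of Lemma \ref{lem2.6}; Lemma \ref{lem6.2} then runs exactly the Moser iteration you describe with truncations $u=\min\{w^{\beta-1},R\}$, controlling the singular contribution $\lambda\beta\int_\Omega w^{2\beta-q-1}$ directly (it is harmless precisely because $q<1$ makes the exponent strictly below $2\beta$, not by restricting to $\{w\ge 1\}$) and absorbing the critical term via Hölder and the trace--Sobolev inequality, yielding $w\in L^r$ for all finite $r$; and Lemma \ref{fin-1} performs the level-set truncation you suggest, observing $\lambda w^{-q}+w^{2^*_s-1}\le \lambda+w^{2^*_s-1}$ on $\{w\ge 1\}$, and then quotes Theorem 3.10 of \cite{MR3023003} for the final $L^\infty$ step rather than continuing the iteration or citing \cite{MR4319042}. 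The one point you treat a bit too lightly is the admissibility of the truncated test functions: since the definition of weak solution only gives the identity against $C^\infty_c$ test functions, extending it to all of $\mathcal{X}^{s}_{\sum_{D}}(\mathcal{C}_\Omega,dxdy)$ (and in particular proving $w^{-q}u\in L^1$) requires the approximation argument of Lemmas \ref{lem2.6}--\ref{lem6.1}; this is genuine content, not merely a routine remark, because the singular weight $w^{-q}$ is not a priori integrable against an arbitrary element of the energy space.
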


\section{Analysis of Nehari manifold}
The features of the Nehari manifold $\mathcal{N}_{\lambda}$ associated to the functional $J$ is discussed in this section.
The Nehari manifold is defined as
 \[
\mathcal{N}_{\lambda} = \bigg\{ w\in \mathcal{X}^{s}_{\sum_{D}}(\mathcal{C}_\Omega{,dxdy}) ~ :~    \|w\|^2_{\mathcal{X}^{s}_{\sum_{D}}(\mathcal{C}_\Omega{,dxdy})} -\lambda  \int_{\Omega} |w(x,0)|^{1-q} dx
 -  \int_{\Omega}|w(x,0)|^{2^*_s} dx = 0\bigg\}.
\]
It is easy to see that $J$ fails to be bounded below over the whole space $\mathcal{X}^{s}_{\sum_{D}}(\mathcal{C}_\Omega{,dxdy})$, but the next result is of great utility.
\begin{theorem} \label{thm3.1}
$J$ is coercive and bounded below on $\mathcal{N}_{\lambda}$.
\end{theorem}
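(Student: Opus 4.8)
The plan is to show that $J$ restricted to $\mathcal{N}_\lambda$ can be rewritten as a functional that grows like the norm squared. The starting point is the Nehari constraint itself: for $w \in \mathcal{N}_\lambda$ we have
\[
\|w\|^2_{\mathcal{X}^{s}_{\sum_{D}}(\mathcal{C}_\Omega,dxdy)} = \lambda \int_\Omega |w(x,0)|^{1-q}\,dx + \int_\Omega |w(x,0)|^{2^*_s}\,dx.
\]
I would substitute this identity into $J(w)$ to eliminate one of the three terms. Eliminating the $\int |w(x,0)|^{2^*_s}$ term via the constraint gives
\[
J(w) = \left(\frac12 - \frac{1}{2^*_s}\right)\|w\|^2_{\mathcal{X}^{s}_{\sum_{D}}} - \lambda\left(\frac{1}{1-q} - \frac{1}{2^*_s}\right)\int_\Omega |w(x,0)|^{1-q}\,dx.
\]
Since $1/2 - 1/2^*_s = s/N > 0$ and, because $0<q<1$ and $2^*_s>2>1$, the coefficient $\frac{1}{1-q} - \frac{1}{2^*_s}>0$, this expresses $J$ on $\mathcal{N}_\lambda$ as a positive multiple of $\|w\|^2$ minus a (positive) lower-order term.

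Next I would control the subtracted term. By the trace/Sobolev inequality \eqref{sobolev-ineq} applied with $r = 2^*_s$ and then Hölder's inequality on the bounded domain $\Omega$ (noting $1-q < 2^*_s$), there is a constant $C>0$, depending on $N,s,q,\Omega$, with
\[
\int_\Omega |w(x,0)|^{1-q}\,dx \le |\Omega|^{1 - \frac{1-q}{2^*_s}}\left(\int_\Omega |w(x,0)|^{2^*_s}\,dx\right)^{\frac{1-q}{2^*_s}} \le C\,\|w\|^{1-q}_{\mathcal{X}^{s}_{\sum_{D}}}.
\]
(One must also check the embedding $\mathcal{X}^{s}_{\sum_{D}}(\mathcal{C}_\Omega,dxdy) \hookrightarrow \mathcal{X}^{s}_{0}$ fails in general, so instead one uses \eqref{sobolev-ineq} directly on $\mathcal{X}^{s}_{\sum_{D}}$, which follows from \eqref{inc} since functions in $\mathcal{X}^{s}_{\sum_{D}}$ vanish on $\sum^{*}_{D}$; the constant is still domain-dependent but that is harmless.) Combining, for every $w \in \mathcal{N}_\lambda$,
\[
J(w) \ge \frac{s}{N}\|w\|^2_{\mathcal{X}^{s}_{\sum_{D}}} - \lambda C\left(\tfrac{1}{1-q} - \tfrac{1}{2^*_s}\right)\|w\|^{1-q}_{\mathcal{X}^{s}_{\sum_{D}}} = g\big(\|w\|_{\mathcal{X}^{s}_{\sum_{D}}}\big),
\]
where $g(t) = \frac{s}{N}t^2 - c_\lambda t^{1-q}$ with $c_\lambda > 0$. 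Since $1-q < 2$, the function $g$ is bounded below on $[0,\infty)$ (it tends to $-$ at worst a finite minimum near the origin and to $+\infty$ as $t\to\infty$), which gives both the lower bound and coercivity of $J$ on $\mathcal{N}_\lambda$: $J(w) \to \infty$ as $\|w\|_{\mathcal{X}^{s}_{\sum_{D}}} \to \infty$ within $\mathcal{N}_\lambda$, and $\inf_{\mathcal{N}_\lambda} J \ge \min_{t\ge 0} g(t) > -\infty$.

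The only genuinely delicate point is justifying the trace inequality \eqref{sobolev-ineq} for functions in $\mathcal{X}^{s}_{\sum_{D}}$ rather than $\mathcal{X}^{s}_0$: a priori \eqref{sobolev-ineq} is stated for $f\in\mathcal{X}^{s}_0(\mathcal{C}_\Omega,dxdy)$. Since $\sum_D$ has positive $(N-1)$-measure and $\mathcal{X}^{s}_{\sum_D}$ is the closure of functions vanishing near $\sum_D^*$, a Poincaré-type inequality on $\mathcal{X}^{s}_{\sum_D}$ still holds, and the embedding $\mathcal{X}^{s}_{\sum_D}(\mathcal{C}_\Omega,dxdy) \hookrightarrow L^{2^*_s}(\Omega\times\{0\})$ remains continuous (one may extend $f$ by zero across $\sum_\mathcal{N}$ to a slightly larger domain and apply the known inequality there, or simply invoke that the trace embedding constant is finite on the bounded cylinder). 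I would cite \cite{MR4319042, MR3912863} for this. Everything else is a one-line convexity/calculus estimate, so this functional-analytic preliminary is where the real content sits.
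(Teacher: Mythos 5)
Your proposal is correct and follows essentially the same route as the paper's proof: substitute the Nehari constraint to eliminate the critical term, obtain $J(w) = (\tfrac12 - \tfrac{1}{2^*_s})\|w\|^2 - \lambda(\tfrac{1}{1-q} - \tfrac{1}{2^*_s})\int_\Omega|w(x,0)|^{1-q}dx$, bound the singular integral by $C\|w\|^{1-q}$ via H\"older and the trace embedding, and conclude from the shape of $t\mapsto At^2 - Bt^{1-q}$. Your extra care about whether the trace inequality \eqref{sobolev-ineq} applies on $\mathcal{X}^{s}_{\sum_{D}}$ rather than $\mathcal{X}^{s}_{0}$ is a legitimate point that the paper glosses over, though your parenthetical ``follows from \eqref{inc}'' is not quite right since \eqref{inc} gives the inclusion in the unhelpful direction; the correct justification is the Poincar\'e/extension argument you sketch afterward, as in \cite{MR4319042,MR3912863}.
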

\begin{proof}
 Since $w\in \mathcal{N}_\lambda$, due to embedding of
 $\mathcal{X}^{s}_{\sum_{D}}(\mathcal{C}_\Omega{,dxdy})$  in $L^{2^*_{s}}(\Omega)$, we find
\begin{align*}
J(w) & = \left(\frac{1}{2}-\frac{1}{2^{*}_s} \right)\|w\|^2_{\mathcal{X}^{s}_{\sum_{D}}(\mathcal{C}_\Omega{,dxdy})}
- \lambda \left(\frac{1}{1-q}-\frac{1}{2^{*}_s}\right)\int_{\Omega} |w(x,0)|^{1-q}dx\\
& \geq A \|w\|^2_{\mathcal{X}^{s}_{\sum_{D}}(\mathcal{C}_\Omega{,dxdy})} - B \|w\|^{1-q}_{\mathcal{X}^{s}_{\sum_{D}}(\mathcal{C}_\Omega{,dxdy})}
\end{align*}
for some constants $A,B>0.$  Here, we use that 
$$\int_{\Omega} |w(x,0)|^{1-q}dx \leq C \left(\int_{\Omega} |w(x,0)|^{2^*_s}dx\right)^{\frac{1-q}{2^*_s}},$$ for some constant $C>0$.
 Hence, being $q\in(0,1)$, the functional $J$ is coercive. Indeed, $J(w) \to \infty$ as $\|w\|_{\mathcal{X}^{s}_{\sum_{D}}(\mathcal{C}_\Omega{,dxdy})} \to \infty$.
Define the map $Z : \mathbb{R}^{+} \to \mathbb{R}$ as
$$Z(t)= A_1t^2-A_2t^{1-q}.$$
Clearly, $Z$ has a unique point of minimum, since $1-q<2$.  This gives
at once that $J$ is bounded below on $\mathcal{N}_{\lambda}.$
\end{proof} \medskip

Now, in order to study the Nehari manifold geometry, let us  introduce
for all $w\in {\mathcal{X}^{s}_{\sum_{D}}(\mathcal{C}_{\Omega}, dxdy)}$ the associated fiber map $\Phi_w:\mathbb{R}^+ \to \mathbb{R}$, defined as
$\Phi_w(t)=J(tw)$ {for all $t\in\mathbb{R}^+$. Consequently,}
$$\Phi_w(t)=
		\frac{t^2}{2} \|w\|^2_{\mathcal{X}^{s}_{\sum_{D}}(\mathcal{C}_\Omega{,dxdy})} - \lambda \frac{t^{1-q}}{1-q} \int_{\Omega} |w(x,0)|^{1-q} dx
      - \frac{t^{2^*_s}}{2^*_s} \int_{\Omega} |w(x,0)|^{2^*_s} dx,
$$
so that
\begin{gather*}
\Phi'_w(t) = t \|w\|^2_{\mathcal{X}^{s}_{\sum_{D}}(\mathcal{C}_\Omega{,dxdy})} -\lambda t^{-q} \int_{\Omega} |w(x,0)|^{1-q} dx
 - {t^{2^*_s-1}} \int_{\Omega}|w(x,0)|^{2^*_s} dx , \\
 \Phi''_w(t) = \|w\|^2_{\mathcal{X}^{s}_{\sum_{D}}(\mathcal{C}_\Omega{,dxdy})} +  \lambda q t^{-q-1}
\int_{\Omega} |w(x,0)|^{1-q} dx - (2^*_s-1) t^{2^*_s-2}
\int_{\Omega}|w(x,0)|^{2^*_s} dx .
\end{gather*}
It is clear that the points in $\mathcal{N}_{\lambda}$ {correspond} to the critical points of $\Phi_{w}$ at $t = 1$. To represent {the local minima,
the local maxima and the points of inflexion of $\Phi_{w}$, the manifold} $\mathcal{N}_{\lambda}$ is splitted into three sets respectively. Accordingly, we define
\begin{align*}
\mathcal{N}_{\lambda}^{+}
&=  \{ w \in \mathcal{N}_{\lambda}:~ \Phi'_w (1) = 0,~ \Phi''_w(1) > 0\}\\
&=  \{ \Bar{t}w \in \mathcal{N}_{\lambda} :~ \Bar{t} > 0,~ \Phi'_w (\Bar{t}) = 0,~ \Phi''_w(\Bar{t}) > 0\},\\
\mathcal{N}_{\lambda}^{-}
&=  \{ w \in \mathcal{N}_{\lambda}:~ \Phi'_w (1) = 0,~~ \Phi''_w(1) <0\}\\
&=  \{ \Bar{t}w \in \mathcal{N}_{\lambda} : \Bar{t} > 0,~  \Phi'_w (\Bar{t}) = 0,~ \Phi''_w(\Bar{t}) < 0\}
\end{align*}
and $ \mathcal{N}_{\lambda}^{0}= \{ w \in \mathcal{N}_{\lambda} : \Phi'_{w}(1)=0,
 \Phi''_{w}(1)=0 \}$.

\begin{lemma}\label{ 3.2}
There exists $\lambda^*>0$ such that for all
$\lambda \in (0,\lambda^*)$ and for all  $w \in \mathcal{X}^{s}_{\sum_{D}}(\mathcal{C}_\Omega{,dxdy})$,
there are unique point $t_{\max}$, $t_*$ and $t^*$, with
$t_*<t_{\max}<t^*$, $t_* w\in \mathcal{N}_{\lambda}^{+}$ and
$ t^* w\in \mathcal{N}_{\lambda}^{-}$.
\end{lemma}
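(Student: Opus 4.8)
The plan is to analyze the sign structure of $\Phi_w'(t)$ for a fixed $w \in \mathcal{X}^{s}_{\sum_{D}}(\mathcal{C}_\Omega,dxdy)$, $w \neq 0$, by factoring out the singular term. Write $a = \|w\|^2_{\mathcal{X}^{s}_{\sum_{D}}(\mathcal{C}_\Omega,dxdy)}$, $b = \int_\Omega |w(x,0)|^{1-q}\,dx$, $c = \int_\Omega |w(x,0)|^{2^*_s}\,dx$, all strictly positive. Then $\Phi_w'(t) = t^{-q}\big(a t^{1+q} - \lambda b - c\, t^{2^*_s - 1 + q}\big)$, so the critical points of $\Phi_w$ on $(0,\infty)$ are exactly the zeros of $g(t) := a t^{1+q} - c\, t^{2^*_s-1+q} - \lambda b$. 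I would first study $h(t) := a t^{1+q} - c\, t^{2^*_s-1+q}$ (the case $\lambda=0$ shifted): since $1+q < 2^*_s - 1 + q$ (because $2^*_s > 2$), $h$ starts at $0$, increases to a unique interior maximum at some $t_{\max}>0$, then decreases to $-\infty$. Solving $h'(t_{\max})=0$ gives the explicit value $t_{\max} = \big(\tfrac{(1+q)a}{(2^*_s-1+q)c}\big)^{1/(2^*_s-2)}$, and one computes $h(t_{\max}) = C_{N,s,q}\, a^{(2^*_s-1+q)/(2^*_s-2)} c^{-(1+q)/(2^*_s-2)}$ for an explicit positive constant.

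The key quantitative step is then to choose $\lambda^*$ so that for all $\lambda \in (0,\lambda^*)$ one has $\lambda b < h(t_{\max})$ for every $w$; this is the point where I expect the main obstacle. The issue is that $b$, $a$, $c$ are not scale-invariant — but $\Phi_{sw} $ and $\Phi_w$ are related by rescaling $t$, so it suffices to prove the inequality on the set $\{a = 1\}$, i.e. $\|w\|=1$. On that set $h(t_{\max}) = C_{N,s,q}\, c^{-(1+q)/(2^*_s-2)}$ and, by the Sobolev trace inequality \eqref{sobolev-ineq}, $c = \int_\Omega |w(x,0)|^{2^*_s}\,dx \le S(s,N)^{-2^*_s/2}$, which bounds $h(t_{\max})$ below by a positive constant depending only on $N,s,q,\Omega$; simultaneously $b = \int_\Omega |w(x,0)|^{1-q}\,dx$ is bounded above via \eqref{sobolev-ineq} with $r = 2^*_s$ and Hölder by a constant depending only on $N,s,q,\Omega$. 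Hence there is $\lambda^* = \lambda^*(N,s,q,\Omega)>0$ with $\lambda \sup_{\|w\|=1} b < \inf_{\|w\|=1} h(t_{\max})$ whenever $\lambda < \lambda^*$, and this transfers to all $w$ by homogeneity.

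With that inequality in hand, the geometry of $g$ follows immediately: $g(t) = h(t) - \lambda b$ satisfies $g(0^+) = -\lambda b < 0$, $g(t_{\max}) = h(t_{\max}) - \lambda b > 0$, and $g(t) \to -\infty$ as $t \to \infty$. Since $g$ is strictly increasing on $(0, t_{\max})$ and strictly decreasing on $(t_{\max}, \infty)$, it has exactly two zeros $t_* \in (0, t_{\max})$ and $t^* \in (t_{\max}, \infty)$, and $g > 0$ strictly between them. Translating back: $\Phi_w'(t)$ has the same sign as $g(t)$, so $\Phi_w'(t_*) = \Phi_w'(t^*) = 0$, $\Phi_w$ increases on $(t_*, t^*)$ and decreases outside, which forces $t_* w \in \mathcal{N}_\lambda$ with $\Phi_w''(t_*) \ge 0$ and $t^* w \in \mathcal{N}_\lambda$ with $\Phi_w''(t^*) \le 0$. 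To get the strict inequalities $\Phi_w''(t_*) > 0$, $\Phi_w''(t^*) < 0$ (hence membership in $\mathcal{N}_\lambda^+$, $\mathcal{N}_\lambda^-$ rather than $\mathcal{N}_\lambda^0$), I note that at a zero $t$ of $\Phi_w'$ one has $a t^{-1} - \lambda b t^{-q-1}\cdot\tfrac{1}{?}$ ... more cleanly, differentiate: since $g'(t_*) > 0$ and $g'(t^*) < 0$ strictly (zeros of $g$ are simple because they lie strictly on either side of the unique critical point $t_{\max}$), and $\Phi_w''(t)$ equals $t^{-q} g'(t) + (\text{a } t^{-q-1} g(t) \text{ term that vanishes at } t_*, t^*)$, we get $\Phi_w''(t_*) = t_*^{-q} g'(t_*) > 0$ and $\Phi_w''(t^*) = (t^*)^{-q} g'(t^*) < 0$. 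Finally $t_* < t_{\max} < t^*$ is built into the construction. Uniqueness of all three points is clear since $g$ has a unique maximizer and exactly two zeros. This completes the proof.
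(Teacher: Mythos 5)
Your proof is correct and follows the same strategy as the paper's: factor $\Phi_w'(t) = t^{-q}\big(h(t) - \lambda b\big)$, analyze the concave-then-decreasing shape of $h$, locate its maximum $t_{\max}$, and choose $\lambda^*$ small enough that $\lambda b < h(t_{\max})$ uniformly in $w$. One place where your write-up is actually cleaner than the paper's is the uniformity argument. The paper writes $\lambda^*$ with $\|w\|^{2(2^*_s-1+q)/(2^*_s-2)}$ appearing explicitly, which on the face of it makes $\lambda^*$ depend on $w$; the dependence does in fact cancel once the embedding constants $C_{2^*_s}$ and $C_{1-q}$ are substituted, but the paper never says so. Your explicit reduction to the unit sphere $\|w\|=1$ via the scaling identity $\Phi_{sw}(t)=\Phi_w(st)$ makes the $w$-independence transparent, and your observation that at a zero $t_0$ of $g$ one has $\Phi_w''(t_0)=t_0^{-q}g'(t_0)$ (since the $g(t_0)$ term drops out) is a tidy justification of the strict sign conditions that the paper leaves implicit. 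Minor nit: with the norm $\|w\|^2=\kappa_s\int y^{1-2s}|\nabla w|^2$, the bound on $c=\int_\Omega|w(x,0)|^{2^*_s}\,dx$ under $\|w\|=1$ is $c\le(\kappa_s S(s,N))^{-2^*_s/2}$, not $S(s,N)^{-2^*_s/2}$; the missing $\kappa_s$ is harmless since all you need is that $c$ is bounded above.
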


\begin{proof} Fix $w \in \mathcal{X}^{s}_{\sum_{D}}(\mathcal{C}_\Omega{,dxdy})$ and
 set, for convenience,
 $$M(w)= \int_{\Omega}|w(x,0)|^{1-q}dx\quad\mbox{and}\quad
N(w)= \int_{\Omega}|w(x,0)|^{2^*_s} dx.$$
Then,
\[
\frac{d}{dt}J(tw)
= t\|w\|^2_{\mathcal{X}^{s}_{\sum_{D}}(\mathcal{C}_\Omega{,dxdy})} - \lambda t^{-q} M(w) - t^{2^*_s-1} N(w)\\
= t^{-q} \big\{\mu_w(t) - \lambda M(w) \big\},
\]
where $\mu_w(t) := t^{1+q} {\|w\|^2_{\mathcal{X}^{s}_{\sum_{D}}(\mathcal{C}_{\Omega},dxdy)}} - t^{2^*_s-1+q} N(w)$.
Since $\mu_w(t) \to -\infty$ as  $t \to \infty$, the function $\mu_w$
 achieves its maximum at
\[
t_{\max} = \left[ \frac{(1+q)\|w\|^2_{\mathcal{X}^{s}_{\sum_{D}}(\mathcal{C}_\Omega{,dxdy})}}{(2^*_s-1+q) N(w)}
\right]^{\frac{1}{2^*_s-2}}
\]
 and the maximum of $\mu_w$ at the point $t_{\max}$ is given by
\[
\mu_w(t_{\max}) = \left( \frac{2^*_s-2}{2^*_s-1+q} \right)
\left( \frac{1+q}{2^*_s-1+q} \right)^{\frac{1+q}{2^*_s-2}}
\frac{(\|w\|^2_{\mathcal{X}^{s}_{\sum_{D}}(\mathcal{C}_\Omega{,dxdy})})^\frac{2^*_s-1+q}{2^*_s-2}}{(N(w))^\frac{1+q}{2^*_s-2}}.
 \]
Now, $tw \in \mathcal{N}_{\lambda}$  if and only if  $\mu_w(t) =  \lambda M(w) $
and we see that
\begin{align*}
\mu_w(t) - \lambda M(w)&\geq \mu_w(t_{\max}) - \lambda  \|w(x,0)\|^{1-q}_{{1-q}}\\
& =  \Big( \frac{2^*_s-2}{2^*_s-1+q} \Big)
\Big( \frac{1+q}{2^*_s-1+q} \Big)^{\frac{1+q}{2^*_s-2}}
\frac{(\|w\|^2_{\mathcal{X}^{s}_{\sum_{D}}(\mathcal{C}_\Omega{,dxdy})}
)^\frac{2^*_s-1+q}{2^*_s-2}}{{N(w)^\frac{1+q}{2^*_s-2}}}
- \lambda  \|w(x,0)\|^{1-q}_{{1-q}}>0,
\end{align*}
provided that
\[
\lambda<\lambda^*:=
\left( \frac{2^*_s-2}{2^*_s-1+q} \right)
\left( \frac{1+q}{2^*_s-1+q} \right)^{\frac{1+q}{2^*_s-2}}
(\|w\|^2_{\mathcal{X}^{s}_{\sum_{D}}(\mathcal{C}_\Omega{,dxdy})}
)^\frac{2^*_s-1+q}{2^*_s-2}
  C^{\frac{-1-q}{2^*_s-2}} (C_{1-q})^{-1},\]
 where {$C_{\alpha} $ for each $\alpha\geq 0$ is given by}
\begin{equation}
 C_{\alpha} = \sup \Big\{ \int_{\Omega} |w(x,0)|^{\alpha} dx :  w\in \mathcal{X}^{s}_{\sum_{D}}(\mathcal{C}_\Omega{,dxdy}), \;\|w\|_{\mathcal{X}^{s}_{\sum_{D}}(\mathcal{C}_\Omega{,dxdy})} = 1\Big\}.
 \end{equation}
\smallskip
An easy observation tells that $ \mu_w(t) = \lambda \int_{\Omega}  |w(x,0)|^{1-q}dx$ if and only if $\Phi'_w(t) = 0$. Therefore, whenever $\lambda \in (0,\lambda^*)$ there are exactly two points $t_*,t^*$ such that $0<t_*<t^*$ with $\mu'_w(t_*)>0$ and $\mu'_w(t^*)<0$ i.e. $t_*w \in \mathcal{N}^{+}_{\lambda}$ and $t^*w \in {\mathcal{N}^{-}_{\lambda}}$. Moreover,  $\Phi_{w}$ is decreasing in $(0,t_*)$ and increasing in  $(t_*,t^*)$ with a local minimum at $t=t_*$ and a local maximum at $t=t^*$.
 \smallskip

\end{proof} \medskip
\begin{corollary}\label{cor3.3}
It holds that $\mathcal{N}_{\lambda}^{0} = \{0\}$ for each $ \lambda \in (0, \lambda^*)$.
\end{corollary}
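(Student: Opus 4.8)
The plan is to show that the only element of $\mathcal{N}_\lambda^0$ is $w=0$ by combining the fiber-map analysis from Lemma \ref{ 3.2} with the definition of $\mathcal{N}_\lambda^0$. Suppose toward a contradiction that $w\in\mathcal{N}_\lambda^0$ with $w\not\equiv 0$ in $\Omega\times\{y=0\}$. Then $\Phi'_w(1)=0$ and $\Phi''_w(1)=0$, so $t=1$ is a critical point of $\Phi_w$ that is neither a local minimum nor a local maximum of the type produced in Lemma \ref{ 3.2}. The key mechanism is that, for $\lambda\in(0,\lambda^*)$, Lemma \ref{ 3.2} (via the strict inequality $\mu_w(t_{\max})-\lambda M(w)>0$) guarantees that the equation $\mu_w(t)=\lambda M(w)$ has \emph{exactly two} positive solutions $t_*<t_{\max}<t^*$, with $\mu_w'(t_*)>0$ and $\mu_w'(t^*)<0$, and these are \emph{strict} inequalities. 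Since $\Phi'_w(t)=0$ is equivalent to $\mu_w(t)=\lambda M(w)$, any $w\in\mathcal{N}_\lambda$ with $w\not\equiv 0$ has precisely these two fiber-critical points, and one checks $\Phi''_w(t_*w)>0$, $\Phi''_w(t^*w)<0$ by relating the sign of $\Phi''_w$ at a critical point to the sign of $\mu_w'$ there. In particular $\Phi''_w$ never vanishes at a critical point, so $w\notin\mathcal{N}_\lambda^0$, a contradiction.

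Concretely, I would carry out the following steps. First, record the algebraic identity that at any $t$ with $\Phi'_w(t)=0$ one has, after eliminating one of the three terms using the Nehari relation,
\[
\Phi''_w(t) = t^{-q}\,\mu_w'(t),
\]
or an equivalent relation showing $\operatorname{sgn}\Phi''_w(t)=\operatorname{sgn}\mu_w'(t)$ at fiber-critical points; this is the crux and follows by differentiating $\Phi'_w(t)=t^{-q}\{\mu_w(t)-\lambda M(w)\}$ and using $\mu_w(t)=\lambda M(w)$ at such $t$. Second, invoke Lemma \ref{ 3.2}: for $w\not\equiv 0$ and $\lambda\in(0,\lambda^*)$ the only solutions of $\mu_w(t)=\lambda M(w)$ are $t_*$ and $t^*$ with $\mu_w'(t_*)>0>\mu_w'(t^*)$, hence $\Phi''_w(t_*)>0$ and $\Phi''_w(t^*)<0$. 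Third, conclude that no $w\not\equiv 0$ lies in $\mathcal{N}_\lambda^0$: if it did, $t=1$ would be one of $t_*,t^*$ and would force $\mu_w'(1)=0$, contradicting the strict signs. Fourth, note that $w=0$ trivially satisfies $\Phi'_0\equiv\Phi''_0\equiv 0$ and lies in $\mathcal{N}_\lambda$ (all three integrals vanish), so $0\in\mathcal{N}_\lambda^0$; combined with the previous step this gives $\mathcal{N}_\lambda^0=\{0\}$.

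I expect the main (and only real) obstacle to be step one: verifying cleanly that the sign of $\Phi''_w$ at a fiber-critical point is governed by $\mu_w'$, i.e. that the degenerate case $\Phi''_w(t)=0$ can occur only when $\mu_w'(t)=0$, which by the strict monotonicity statements in Lemma \ref{ 3.2} happens only at $t=t_{\max}$, a point that is \emph{not} a solution of $\mu_w(t)=\lambda M(w)$ when $\lambda<\lambda^*$ (precisely because $\mu_w(t_{\max})-\lambda M(w)>0$ there). So the strict inequality established in the proof of Lemma \ref{ 3.2} is exactly what rules out $\mathcal{N}_\lambda^0$ containing nonzero points, and the corollary is essentially a restatement of that strictness. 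The remaining steps are bookkeeping and require no new estimates.
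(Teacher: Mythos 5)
Your proposal is correct and follows essentially the same route as the paper: both rest on Lemma \ref{ 3.2}, which shows that for $\lambda\in(0,\lambda^*)$ and $w\not\equiv 0$ the fiber map $\Phi_w$ has exactly the two critical points $t_*$, $t^*$, at which $\Phi''_w$ is respectively strictly positive and strictly negative, so $t=1$ cannot be a degenerate critical point. Your added observation that $\Phi''_w(t)=t^{-q}\mu_w'(t)$ at fiber-critical points makes explicit the mechanism the paper invokes only implicitly, but the argument is otherwise the same.
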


\begin{proof}
Assume by contradiction that $w \not\equiv 0 \in \mathcal{N}_{\lambda}^{0}$ and $\lambda\in (0,\lambda^*)$. Then, $w \in \mathcal{N}_{\lambda}$ that is, $1$ is a critical point of $\Phi_{w}$ by the definition of $\mathcal{N}_{\lambda}$.
We say that $\Phi_{w}$ possesses critical points corresponds to local minima or local maxima by Lemma \ref{ 3.2}. Therefore, 1 is the critical point of $\Phi_w$ corresponding to the local minima or local maxima of $\Phi_{w}$.
In view of this, either $w \in \mathcal{N}_{\lambda}^{+}$ or $w \in \mathcal{N}_{\lambda}^{-}$ that is a contradiction.
\end{proof} \medskip

From now on, we fix $ \lambda \in (0, \lambda^*)$,
without further mentioning. The next result establishes that $\mathcal{N}_{\lambda}^{+}$ and $\mathcal{N}_{\lambda}^{-}$ are bounded in a suitable sense.
\begin{lemma}\label{lem3.4}
 The following properties hold true.
\begin{itemize}
\item[$(P_{1})$] $\sup \{ \|w\|_{\mathcal{X}^{s}_{\sum_{D}}(\mathcal{C}_\Omega{,dxdy})}: w\in \mathcal{N}_{\lambda}^{+}\} < \infty $
\item[$(P_{2})$] $\inf \{ \|u\|_{\mathcal{X}^{s}_{\sum_{D}}(\mathcal{C}_\Omega{,dxdy})}: u \in \mathcal{N}_{\lambda}^{-} \} >0$  and
$ \sup \{ \|u\|_{\mathcal{X}^{s}_{\sum_{D}}(\mathcal{C}_\Omega{,dxdy})} : u \in \mathcal{N}_{\lambda}^{-} , J(u) \leq \mathcal{M}\} < \infty$,
for each $\mathcal{M} > 0$.
\end{itemize}
Furthermore, $\inf J(\mathcal{N}_{\lambda}^{+}) > - \infty$ and
$\inf J(\mathcal{N}_{\lambda}^{-}) > - \infty$.
\end{lemma}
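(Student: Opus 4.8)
The plan is to exploit, for $w\in\mathcal{N}_\lambda$, the Nehari identity $\|w\|^2=\lambda M(w)+N(w)$ (i.e. $\Phi_w'(1)=0$) together with the sign of $\Phi_w''(1)$, and then to convert the resulting algebraic inequalities into norm bounds through the trace embedding \eqref{sobolev-ineq}. Throughout I write $\|\cdot\|=\|\cdot\|_{\mathcal{X}^{s}_{\sum_{D}}(\mathcal{C}_\Omega{,dxdy})}$ and, as in the proof of Lemma \ref{ 3.2}, $M(w)=\int_\Omega|w(x,0)|^{1-q}\,dx$, $N(w)=\int_\Omega|w(x,0)|^{2^*_s}\,dx$; recall that, with the constants $C_\alpha$ introduced there, $M(w)\le C_{1-q}\|w\|^{1-q}$ and $N(w)\le C_{2^*_s}\|w\|^{2^*_s}$.

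For $(P_1)$ I would take $w\in\mathcal{N}_\lambda^{+}$, substitute $N(w)=\|w\|^2-\lambda M(w)$ (from the Nehari identity) into $\Phi_w''(1)=\|w\|^2+\lambda q M(w)-(2^*_s-1)N(w)>0$, and rearrange to get $(2^*_s-2)\|w\|^2<\lambda(2^*_s-1+q)M(w)\le\lambda(2^*_s-1+q)C_{1-q}\|w\|^{1-q}$; since $2^*_s-2>0$ and $1+q>0$ this yields $\|w\|^{1+q}\le\lambda(2^*_s-1+q)C_{1-q}/(2^*_s-2)$, hence the claimed uniform bound. (Alternatively, every $w\in\mathcal{N}_\lambda^{+}$ realises a local minimum of $\Phi_w$ on $(0,\infty)$, so that $J(w)=\Phi_w(1)<\lim_{t\to0^+}\Phi_w(t)=0$ because $\Phi_w$ is strictly decreasing on $(0,1)$ by Lemma \ref{ 3.2}; then $\mathcal{N}_\lambda^{+}$ lies inside the set $\{J\le0\}\cap\mathcal{N}_\lambda$, which is bounded by the coercivity of Theorem \ref{thm3.1}.)

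For the first half of $(P_2)$ I would take $u\in\mathcal{N}_\lambda^{-}$ and drop the nonnegative term $\lambda q M(u)$ in $\Phi_u''(1)<0$ to obtain $\|u\|^2<(2^*_s-1)N(u)\le(2^*_s-1)C_{2^*_s}\|u\|^{2^*_s}$; as $2^*_s>2$ this forces $\|u\|\ge\big((2^*_s-1)C_{2^*_s}\big)^{-1/(2^*_s-2)}>0$. The second half is immediate from Theorem \ref{thm3.1}: coercivity of $J$ on $\mathcal{N}_\lambda$ makes $\{u\in\mathcal{N}_\lambda:J(u)\le\mathcal{M}\}$ bounded for every $\mathcal{M}>0$, hence so is its subset in $\mathcal{N}_\lambda^{-}$. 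Likewise, since $\mathcal{N}_\lambda^{+},\mathcal{N}_\lambda^{-}\subset\mathcal{N}_\lambda$ and $J$ is bounded below on $\mathcal{N}_\lambda$ by Theorem \ref{thm3.1}, we conclude $\inf J(\mathcal{N}_\lambda^{+})>-\infty$ and $\inf J(\mathcal{N}_\lambda^{-})>-\infty$.

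I expect the only subtle point to be the sign bookkeeping in $(P_1)$: the inequality coming out of $\Phi_w''(1)>0$ turns into an honest \emph{upper} bound on $\|w\|$ only after using that $M(w)$ grows sublinearly, like $\|w\|^{1-q}$, and that $2^*_s-2>0$; the identical computation on $\mathcal{N}_\lambda^{-}$ yields instead a lower bound, consistent with $\mathcal{N}_\lambda^{-}$ being unbounded and only its $J$-sublevel sets being controlled. Everything else reduces to a single substitution and one application of the trace inequality.
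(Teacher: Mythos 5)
Your proposal is correct and follows essentially the same route as the paper: for $(P_1)$ you perform the identical substitution of the Nehari identity into $\Phi_w''(1)>0$ and reach the same bound $\|w\|^{1+q}\le\lambda(2^*_s-1+q)C_{1-q}/(2^*_s-2)$; for $(P_2)$ and the final infimum bounds you invoke the trace inequality and coercivity from Theorem \ref{thm3.1} exactly as the paper does. The only cosmetic difference is that in the lower bound of $(P_2)$ you discard $\lambda q M(u)\ge0$ directly from $\Phi_u''(1)<0$ (yielding the constant $(2^*_s-1)C_{2^*_s}$), whereas the paper first rewrites $\Phi_u''(1)=(1+q)\|u\|^2-(2^*_s-1+q)N(u)$ via the Nehari identity; both give an equally valid strictly positive lower bound on $\|u\|$.
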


\begin{proof}
$(P_{1})$:  {Fix} $w \in \mathcal{N}_{\lambda}^{+}$.
Then, by definition of $\mathcal{N}^+_{\lambda}$ and
the embedding results
\begin{align*}
0  < \Phi''_w(1) &=
(2-2^*_s) \|w\|^2_{\mathcal{X}^{s}_{\sum_{D}}(\mathcal{C}_\Omega{,dxdy})} + \lambda (2^*_s -1+q) \int_{\Omega}  |w(x,0)|^{1-q} dx   \\
 & \leq (2-2^*_s) \|w\|^2_{\mathcal{X}^{s}_{\sum_{D}}(\mathcal{C}_\Omega{,dxdy})} +  \lambda (2^*_s -1+q) C
  \|w(x,0)\|^{1-q}_{\mathcal{X}^{s}_{\sum_{D}}(\mathcal{C}_\Omega{,dxdy})} .
\end{align*}
This gives, for some constant $C>0$,
\[
\|w\|_{\mathcal{X}^{s}_{\sum_{D}}(\mathcal{C}_\Omega{,dxdy})} \leq \Big( \frac{\lambda (2^*_s -1+q) C} {2^*_s-2}
\Big)^{\frac{1}{1+q}}.
\]
\smallskip

\noindent
$(P_{2})$: Take $u \in \mathcal{N}_{\lambda}^{-}$. The definition of $\mathcal{N}^-_{\lambda}$ and the embedding results yield
\begin{align*}
0  > \Phi''_u(1)
&=   (1+q) \|u\|^2_{\mathcal{X}^{s}_{\sum_{D}}(\mathcal{C}_\Omega{,dxdy})} - (2^*_s-1+q) \int_{\Omega}|u(x,0)|^{2^*_s} dx  \\
 & \geq (1+q) \|u\|^2_{\mathcal{X}^{s}_{\sum_{D}}(\mathcal{C}_\Omega{,dxdy})} - (2^*_s-1+q) C'\|u(x,0)\|^{2^*_s}_{\mathcal{X}^{s}_{\sum_{D}}(\mathcal{C}_\Omega{,dxdy})}.
\end{align*}
Therefore, there exists a constant $C'>0$ such that
\[
\|u\|_{\mathcal{X}^{s}_{\sum_{D}}(\mathcal{C}_\Omega{,dxdy})} \geq \Big( \frac{1+q}{(2^*_s-1+q) C'}
 \Big)^{\frac{1}{2^*_s-2}}.
\]
This implies~  $\inf \{ \|u\|_{\mathcal{X}^{s}_{\sum_{D}}(\mathcal{C}_\Omega{,dxdy})} : u \in \mathcal{N}_{\lambda}^{-} \} >0$.
Now, if $J(u) \leq \mathcal{M}$, for some $\mathcal{M}>0$, {then
there exists a constant $C'>0$ such that}
\[
\frac{2^*_s-2}{2 2^*_s} \|u\|^2_{\mathcal{X}^{s}_{\sum_{D}}(\mathcal{C}_\Omega{,dxdy})}
- \lambda \Big( \frac{2^*_s-1+q}{2^*_s(1-q)} \Big)
C'  \|u\|^{1-q}_{\mathcal{X}^{s}_{\sum_{D}}(\mathcal{C}_\Omega{,dxdy})} \leq \mathcal{M}.
\]
Thus, we have proved that
for each $\mathcal{M}>0$
$$ \sup \{ \|u\|_{\mathcal{X}^{s}_{\sum_{D}}(\mathcal{C}_\Omega{,dxdy})}\, : \,u \in \mathcal{N}_{\lambda}^{-} ,\,\, J(u) \leq \mathcal{M}\} < \infty,$$
as stated.
Finally, property $(P_{1})$
shows that $\inf J(\mathcal{N}_{\lambda}^{+}) > - \infty$
and
property $(P_{2})$ implies
that $\inf J(\mathcal{N}_{\lambda}^{-}) > - \infty$.
\end{proof} \medskip
\section{Existence of {minimizers} over $\mathcal {N_{\lambda}}^{+}$ and $\mathcal {N_{\lambda}}^{-}$}

This section is devoted to the proof of the most crucial part of the article, i.e. to the proof that $\inf J(\mathcal N_\lambda^+)$ and $\inf J(\mathcal N_\lambda^-)$ are actually achieved.

\subsection{Properties of minimizers over $\mathcal N_{\lambda}^+$ and $\mathcal N_{\lambda}^-$}

This section contains essential properties of  minimizers of $J$ over $\mathcal N_\lambda^+$ and $\mathcal N_\lambda^-$, which {allow us to show that} these minimizers are (weak) solutions of $(P_\lambda^*)$.

\begin{lemma} \label{3.5}
Let $u_1$ and $u_2$ be two minimizers of $J$ over
$\mathcal{N}_{\lambda}^{+}$ and
$\mathcal{N}_{\lambda}^{-}$, respectively. Then for all non negative $   w \in \mathcal{X}^{s}_{\sum_{D}}(\mathcal{C}_\Omega{,dxdy})$
\begin{enumerate}
\item  there is $\epsilon^{'} > 0$ such that $J(u_1 +\epsilon w) \geq J(u_1)$
 for each $ \epsilon \in [0,\epsilon^{'}]$, and

\item  {if} $t_{\epsilon}$
 is the unique positive real number which satisfies
 $t_{\epsilon} (u_2 + \epsilon w) \in \mathcal{N}_{\lambda}^{-}$,
 {then} $t_{\epsilon} \to 1$  as $\epsilon \to 0^+$.
\end{enumerate}
\end{lemma}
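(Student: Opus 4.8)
The plan is to deduce both assertions from the fibering analysis of Lemma~\ref{ 3.2}. We may assume $u_1,u_2\ge 0$, so that $u_i(\cdot,0)\ge 0$; since $u_i\in\mathcal{N}_\lambda$ we have $\|u_i\|^2_{\mathcal{X}^{s}_{\sum_{D}}(\mathcal{C}_\Omega,dxdy)}=\lambda M(u_i)+N(u_i)$, which would be $0$ if $u_i(\cdot,0)\equiv 0$, so $M(u_i),N(u_i)>0$. As $w\ge 0$, the curve $v_\epsilon:=u_i+\epsilon w$ satisfies $v_\epsilon(\cdot,0)\ge u_i(\cdot,0)$, hence $M(v_\epsilon),N(v_\epsilon)>0$ for all $\epsilon\ge 0$, and Lemma~\ref{ 3.2} applies to each $v_\epsilon$: there are unique $0<t_*(v_\epsilon)<t_{\max}(v_\epsilon)<t^*(v_\epsilon)$ with $t_*(v_\epsilon)v_\epsilon\in\mathcal{N}_\lambda^{+}$ and $t^*(v_\epsilon)v_\epsilon\in\mathcal{N}_\lambda^{-}$, and $\Phi_{v_\epsilon}$ is decreasing on $(0,t_*(v_\epsilon))$ and increasing on $(t_*(v_\epsilon),t^*(v_\epsilon))$. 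Since $\Phi''_v(t_0)=t_0^{-q}\mu'_v(t_0)$ at every critical point $t_0$ of $\Phi_v$, the memberships $u_1\in\mathcal{N}_\lambda^{+}$ and $u_2\in\mathcal{N}_\lambda^{-}$ force $t_*(u_1)=1$ and $t^*(u_2)=1$.

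The decisive step is the continuity of $\epsilon\mapsto t_*(v_\epsilon)$ and $\epsilon\mapsto t^*(v_\epsilon)$ at $\epsilon=0$. The functionals $\epsilon\mapsto\|v_\epsilon\|^2_{\mathcal{X}^{s}_{\sum_{D}}(\mathcal{C}_\Omega,dxdy)}$, $M(v_\epsilon)$ and $N(v_\epsilon)$ are continuous on $[0,\infty)$ (the first is a polynomial in $\epsilon$; the other two by dominated convergence, since $|v_\epsilon(\cdot,0)|^{1-q}\le|u_i(\cdot,0)|^{1-q}+|w(\cdot,0)|^{1-q}\in L^1(\Omega)$ and $|v_\epsilon(\cdot,0)|^{2^*_s}$ is locally uniformly dominated by an $L^1(\Omega)$ function); hence $t_{\max}(v_\epsilon)\to t_{\max}(u_i)$ by the explicit formula of Lemma~\ref{ 3.2}. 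Because $t_*(v_\epsilon)$ and $t^*(v_\epsilon)$ are precisely the two roots of $\mu_{v_\epsilon}(t)=\lambda M(v_\epsilon)$ bracketing $t_{\max}(v_\epsilon)$, and at $\epsilon=0$ these roots are simple ($\mu'_{u_i}(1)=\Phi''_{u_i}(1)\ne 0$ since $u_i\in\mathcal{N}_\lambda^{\pm}$, while $\mu'_{u_i}(t_*(u_i))>0$ and $\mu'_{u_i}(t^*(u_i))<0$ from the proof of Lemma~\ref{ 3.2}), a standard continuous-dependence-of-roots argument gives $t_*(v_\epsilon)\to t_*(u_i)$ and $t^*(v_\epsilon)\to t^*(u_i)$ as $\epsilon\to0^+$. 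In particular, recalling $t_*(u_1)=1$, $t^*(u_2)=1$, this supplies the input used below.

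Granting this, part (1) follows: with $v_\epsilon=u_1+\epsilon w$ we have $t_*(v_\epsilon)\to t_*(u_1)=1$ and $t^*(v_\epsilon)\to t^*(u_1)>t_{\max}(u_1)>1$, so there is $\epsilon'>0$ with $t^*(v_\epsilon)>1$ for every $\epsilon\in[0,\epsilon']$; for such $\epsilon$ the value $1$ lies in $(0,t^*(v_\epsilon))$, where $\Phi_{v_\epsilon}$ attains its minimum at $t_*(v_\epsilon)$, and therefore
\[
J(u_1+\epsilon w)=\Phi_{v_\epsilon}(1)\ \ge\ \Phi_{v_\epsilon}\!\big(t_*(v_\epsilon)\big)=J\!\big(t_*(v_\epsilon)\,v_\epsilon\big)\ \ge\ \inf_{\mathcal{N}_\lambda^{+}}J=J(u_1),
\]
the last inequality because $t_*(v_\epsilon)v_\epsilon\in\mathcal{N}_\lambda^{+}$ and $u_1$ minimizes $J$ there. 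For part (2), with $v_\epsilon=u_2+\epsilon w$, the uniqueness statement in Lemma~\ref{ 3.2} identifies the $t_\epsilon$ of the statement with $t^*(v_\epsilon)$, and the continuity established above together with $t^*(u_2)=1$ yields $t_\epsilon=t^*(v_\epsilon)\to 1$ as $\epsilon\to0^+$.

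The main obstacle is the continuity statement of the second paragraph: one must confirm that the strict sign of $\Phi''_{u_i}(1)$ built into $\mathcal{N}_\lambda^{\pm}$ is exactly the simplicity of the relevant root of $\mu_{v_\epsilon}(t)=\lambda M(v_\epsilon)$ at $\epsilon=0$ (via $\Phi''_v(t_0)=t_0^{-q}\mu'_v(t_0)$), and that the two roots remain, respectively, on the two sides of $t_{\max}(v_\epsilon)$ as $\epsilon\to0^+$. The continuity of $\|v_\epsilon\|^2$, $M(v_\epsilon)$, $N(v_\epsilon)$ in $\epsilon$, and all the remaining estimates, are routine consequences of Lemma~\ref{ 3.2} together with the boundedness and embedding facts already established.
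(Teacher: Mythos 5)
Your argument is correct and follows essentially the same strategy as the paper: exploit Lemma~\ref{ 3.2} together with a continuity/stability statement for the fiber-map quantities along the ray $\epsilon\mapsto u_i+\epsilon w$. The only cosmetic difference is in part~(1): the paper tracks continuity of $\epsilon\mapsto\Phi''_{u_1+\epsilon w}(1)$ (its $\sigma(\epsilon)$) and uses $\sigma(\epsilon)>0$ to place $1$ to the left of the local maximum of $\Phi_{u_1+\epsilon w}$, whereas you instead track continuity of $t^*(u_1+\epsilon w)$ itself and use $t^*(u_1)>1$; both yield the required inequality $\Phi_{u_1+\epsilon w}(1)\ge\Phi_{u_1+\epsilon w}(t_*(u_1+\epsilon w))$, and your part~(2) is the implicit-function-theorem argument of the paper phrased as continuous dependence of a simple root.
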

\begin{proof} Fix  the two minimizers $u_1\in \mathcal{N}_{\lambda}^{+}$ and $u_2\in \mathcal{N}_{\lambda}^{-}$ of $J$ and also $ w \in \mathcal{X}^{s}_{\sum_{D}}(\mathcal{C}_\Omega,dxdy)$, as in the statement.\\
(1) Assume {that $w \geq 0$ a.e in $\mathcal{C}_\Omega$ and} set
$$
\sigma(\epsilon) = \|u_1+\epsilon w\|^2_{\mathcal{X}^{s}_{\sum_{D}}(\mathcal{C}_\Omega{,dxdy})} + \lambda q \int_{\Omega}
 |(u_1+\epsilon w)(x,0)|^{1-q}\,dx - (2^*_s-1) \int_{\Omega} |(u_1+\epsilon w)(x,0)|^{2^*_s} dx
$$
for all $\epsilon \ge 0$. By the continuity of $\sigma$ in $\mathbb R^+_0$ and
by the fact that
$\sigma(0) = \Phi''_{u_{1}}(1) >0$, being $u_1 \in \mathcal{N}^{+}_{\lambda}$, there exists
$\epsilon^{'} >0 $ such that $\sigma(\epsilon)> 0$ for $\epsilon \in [0, \epsilon^{'}]$.
 For all $\epsilon > 0$, there exists a unique $t^*_{\epsilon}>0$ with
$t^*_{\epsilon}(u_1 + \epsilon w) \in \mathcal{N}^{+}_{\lambda}$, by Lemma \ref{ 3.2}, and $t^*_{\epsilon} \to 1$
as $\epsilon \to 0$, since $u_1 \in \mathcal{N}^+_{\lambda}$. For each $\epsilon \in [0, \epsilon^{'}]$ we have
\[
J(u_1 + \epsilon w) \geq J(t^*_{\epsilon}(u_1 + \epsilon w))
\geq \inf J(\mathcal{N}^{+}_{\lambda})= J(u_1),
\]
as required.
\smallskip

\noindent
(2)  Define the map  $g :{\mathbb R^+}\times \mathbb{R}^3 \to \mathbb{R}  $ by
\[
g(t,a_1,a_2,a_3) = a_1t - \lambda t^{-q}a_2 - t^{2^*_s-1}a_3
\]
for $(t,a_1,a_2,a_3)\in \mathbb R^+\times \mathbb{R}^3$.
Clearly, $g$ is a $C^{\infty}$ function and we find
\begin{equation*}
\frac{dg}{dt} \Big(1, \|u_2\|^2_{\mathcal{X}^{s}_{\sum_{D}}(\mathcal{C}_{\Omega}{,dxdy})},\int_{\Omega}|u_2(x,0)|^{1-q}\,dx ,
\int_{\Omega}|u_2(x,0)|^{2^*_s}dx\Big) = \Phi''_{u_2}(1)<0,
\end{equation*}
since $ u_2 \in \mathcal{N}^-_{\lambda}$.
For each $\epsilon > 0$ we have
$$g \big(t_{\epsilon},
\|u_2+\epsilon w\|^2_{\mathcal{X}^{s}_{\sum_{D}}(\mathcal{C}_\Omega{,dxdy})},  \int_{\Omega}|(u_2 + \epsilon w)(x,0)|^{1-q}\,dx,
\int_{\Omega} |u_2(x,0)|^{2^*_s} dx\big) = \Phi'_{u_2+\epsilon w}(t_\epsilon)=0.$$
 Furthermore,
\[
g\Big(1, \|u_2\|^2_{\mathcal{X}^{s}_{\sum_{D}}(\mathcal{C}_\Omega{,dxdy})}, \int_{\Omega} |u_2(x,0)|^{1-q}\,dx, \int_{\Omega} |u_2(x,0)|^{2^*_s}dx\Big)
= \Phi'_{u_2}(1) = 0.
\]
As a consequence of the implicit function theorem, there exists open neighborhoods
$ U \subset {\mathbb R^+}$, $U^{'} \subset \mathbb{R}^3$ containing
$1$ and $\Big(\|u_2\|^2_{\mathcal{X}^{s}_{\sum_{D}}(\mathcal{C}_\Omega{,dxdy})},\int_{\Omega} |u_2(x,0)|^{1-q}\,dx, \int_{\Omega}|u_2(x,0)|^{2^*_s}dx \Big)$,
respectively, such that for each $y \in U^{'} $ the equation $g(t,y) = 0 $ has a unique
solution $ t = l(y)\in U $ {and  $l : U^{'} \to U$ is} a continuous function.
Hence,
$$\Big(\|u_2+\epsilon w\|^2_{\mathcal{X}^{s}_{\sum_{D}}(\mathcal{C}_\Omega{,dxdy})},\; \int_{\Omega}|(u_2+ \epsilon w)(x,0)|^{1-q}\,dx,
\int_{\Omega}|(u_2+ \epsilon w)(x,0)|^{2^*_s}dx\Big) \in U^{'}$$
and
\[
 l \left( \|u_2+\epsilon w)\|^2_{\mathcal{X}^{s}_{\sum_{D}}(\mathcal{C}_\Omega{,dxdy})},\; \int_{\Omega}|(u_2+ \epsilon w)(x,0)|^{1-q}\,dx,
 \int_{\Omega}|(u_2+ \epsilon w)(x,0)|^{2^*_s}dx \right) = t_{\epsilon},
\]
{being} $g\Big(t_{\epsilon},\|u_2+\epsilon w)\|^2_{\mathcal{X}^{s}_{\sum_{D}}(\mathcal{C}_\Omega{,dxdy})}$,
$\int_{\Omega}|(u_2+ \epsilon w)(x,0)|^{1-q}\,dx$,
$\int_{\Omega}|(u_2+ \epsilon w)(x,0)|^{2^*_s}dx\Big) = 0$. Therefore,
{$t_{\epsilon} \to 1$ as $\epsilon \to 0^+$  by the} continuity of $l$.
 \end{proof} \medskip

{\begin{remark}
    Due to the definition of $J$, it is easy to note that $J(u)=J(|u|)$, for $u\in \mathcal{X}^{s}_{\sum_{D}}(\mathcal{C}_\Omega{,dxdy})$. This allows us to assume that any minimizer of $J$ over $\mathcal{N}_{\lambda}^{+}$ and $\mathcal{N}_{\lambda}^{-}$ are non negative.
\end{remark}}

\begin{lemma} \label{lem3.6}
Assume that $u_1$ and $u_2$ are non negative minimizers of $J$ on $\mathcal{N}_{\lambda}^{+}$ and
$\mathcal{N}_{\lambda}^{-}$ respectively. Then
for all $w\in \mathcal{X}^{s}_{\sum_{D}}(\mathcal{C}_\Omega{,dxdy})$, we have $u^{-q}_1(.,0)w(.,0) ~,~ u^{-q}_2(.,0) w(.,0) \in L^{1}(\Omega)$ and
\begin{gather}
\kappa_{s} \int_{\mathcal{C}_{\Omega}} y^{1-2s} \langle{\nabla u_1, \nabla w } \rangle \,dx\,dy
 - \lambda \int_\Omega  u_1^{-q}(x,0)w(x,0)\,dx
 - \int_\Omega u_1^{2^*_s-1}(x,0)w(x,0) dx  \geq 0, \label{eq4.2}\\
\kappa_{s} \int_{\mathcal{C}_{\Omega}} y^{1-2s} \langle{\nabla u_2, \nabla w } \rangle \,dx\,dy
 - \lambda \int_\Omega  u_2^{-q}(x,0)w(x,0)\,dx
 - \int_\Omega u_2^{2^*_s-1}(x,0)w(x,0) dx \geq 0.\label{eq4.3}
\end{gather}
\end{lemma}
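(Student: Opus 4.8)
The plan is to test the one-sided minimality of $u_1$ and $u_2$ provided by Lemma~\ref{3.5} against the perturbed paths $u_i+\epsilon w$ with $w\ge 0$, to form the difference quotient $\epsilon^{-1}\bigl(J(\cdot)-J(u_i)\bigr)$, and to let $\epsilon\to 0^+$. The integrability assertions $u_1^{-q}(\cdot,0)w(\cdot,0),\,u_2^{-q}(\cdot,0)w(\cdot,0)\in L^1(\Omega)$ need not be established separately: they will drop out of the very same limit as an a~priori bound. I carry out the argument first for $w\ge 0$; the $L^1$ claim for a general $w\in\mathcal{X}^{s}_{\sum_{D}}(\mathcal{C}_\Omega,dxdy)$ then follows by decomposing $w=w^{+}-w^{-}$.

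The three terms of $J$ behave as follows along $\epsilon\mapsto u_i+\epsilon w$. The Dirichlet (quadratic) term is a polynomial in $\epsilon$, so its difference quotient converges to $\kappa_s\int_{\mathcal{C}_\Omega}y^{1-2s}\langle\nabla u_i,\nabla w\rangle\,dx\,dy$. For the critical term, the elementary mean value inequality gives $0\le\epsilon^{-1}\bigl((u_i+\epsilon w)^{2^*_s}-u_i^{2^*_s}\bigr)\le 2^*_s\,(u_i+w)^{2^*_s-1}\,w$ for $\epsilon\in(0,1]$, and the majorant lies in $L^1(\Omega)$ by H\"older's inequality together with the trace embedding, so dominated convergence yields the limit $2^*_s\int_\Omega u_i^{2^*_s-1}(x,0)w(x,0)\,dx$. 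The singular term is the delicate one: since $w\ge 0$ and $r\mapsto r^{1-q}$ is non-decreasing and concave on $[0,\infty)$, the difference quotients $\epsilon^{-1}\bigl((u_i+\epsilon w)^{1-q}-u_i^{1-q}\bigr)$ are non-negative and increase, as $\epsilon\downarrow 0$, pointwise a.e.\ to $(1-q)\,u_i^{-q}\,w$ (with the convention $0^{-q}=+\infty$); hence the monotone convergence theorem yields $\epsilon^{-1}\int_\Omega\bigl((u_i+\epsilon w)^{1-q}-u_i^{1-q}\bigr)\,dx\to(1-q)\int_\Omega u_i^{-q}(x,0)w(x,0)\,dx$ in $[0,+\infty]$.

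For $u_1\in\mathcal{N}_\lambda^{+}$, Lemma~\ref{3.5}(1) gives $\epsilon'>0$ with $J(u_1+\epsilon w)-J(u_1)\ge 0$ for $\epsilon\in[0,\epsilon']$. Dividing by $\epsilon>0$ and isolating the singular term, the quantity $\tfrac{\lambda}{1-q}\,\epsilon^{-1}\int_\Omega\bigl((u_1+\epsilon w)^{1-q}-u_1^{1-q}\bigr)\,dx$ is bounded above by the difference of the quadratic and critical difference quotients, which converges to a finite limit as $\epsilon\to 0^+$; since the left-hand side is monotone in $\epsilon$, this bound forces $\int_\Omega u_1^{-q}(x,0)w(x,0)\,dx<\infty$, i.e.\ $u_1^{-q}(\cdot,0)w(\cdot,0)\in L^1(\Omega)$. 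Letting $\epsilon\to 0^+$ in the inequality, using the three limits above, produces precisely \eqref{eq4.2}. For $u_2\in\mathcal{N}_\lambda^{-}$ one argues through the rescaled path: by Lemma~\ref{3.5}(2) there is $t_\epsilon>0$ with $t_\epsilon(u_2+\epsilon w)\in\mathcal{N}_\lambda^{-}$ and $t_\epsilon\to 1$ as $\epsilon\to 0^+$, hence $J(t_\epsilon(u_2+\epsilon w))\ge\inf J(\mathcal{N}_\lambda^{-})=J(u_2)$; on the other hand $t=1$ is a local maximum point of $\Phi_{u_2}$ (Lemma~\ref{ 3.2}, since $u_2\in\mathcal{N}_\lambda^{-}$), so $J(t_\epsilon u_2)=\Phi_{u_2}(t_\epsilon)\le\Phi_{u_2}(1)=J(u_2)$ for $\epsilon$ small. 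Subtracting, $0\le J(t_\epsilon(u_2+\epsilon w))-J(t_\epsilon u_2)$; dividing by $\epsilon$, expanding as above (now carrying the harmless factors $t_\epsilon^{2},t_\epsilon^{1-q},t_\epsilon^{2^*_s}\to 1$), and repeating the a~priori bound argument gives $u_2^{-q}(\cdot,0)w(\cdot,0)\in L^1(\Omega)$ and \eqref{eq4.3}.

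The main obstacle, and the heart of the lemma, is the handling of the singular term: one must simultaneously justify that $\epsilon^{-1}\int_\Omega\bigl((u_i+\epsilon w)^{1-q}-u_i^{1-q}\bigr)$ has a limit in $[0,+\infty]$ — this is where the monotonicity and concavity of $r\mapsto r^{1-q}$ and the monotone convergence theorem are essential — and extract the \emph{finiteness} of that limit from the a~priori estimate coming from the other two terms, so that $u_i^{-q}(\cdot,0)w(\cdot,0)\in L^1(\Omega)$ is known \emph{before} one is entitled to pass to the limit in the full inequality. The remaining subtlety in the $\mathcal{N}_\lambda^{-}$ case, namely controlling the factors $t_\epsilon^{\alpha}\to 1$ uniformly for small $\epsilon$, is routine because these factors are bounded and bounded away from $0$; the quadratic and critical limits are standard applications of dominated convergence.
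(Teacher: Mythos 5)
Your proposal is correct and follows essentially the same route as the paper: test the one-sided minimality of $u_1$ (Lemma~\ref{3.5}(1)) resp.\ the sandwich $J(t_\epsilon(u_2+\epsilon w))\ge J(u_2)\ge J(t_\epsilon u_2)$ (Lemma~\ref{3.5}(2) and the local-max property of $t=1$ for $\Phi_{u_2}$), form difference quotients, pass to the limit in the quadratic and critical terms directly, and treat the singular term via the monotone convergence theorem, reading off the $L^1$-integrability of $u_i^{-q}(\cdot,0)w(\cdot,0)$ from the a priori bound furnished by the other two terms. Your additional remarks (explicit dominated-convergence majorant for the critical term, the concavity justification for monotonicity of the singular difference quotient, and the decomposition $w=w^+-w^-$ to extend the $L^1$ claim beyond $w\ge 0$) are minor clarifications, not a different argument.
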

\begin{proof}
Fix  the two {non negative} minimizers $u_1\in \mathcal{N}_{\lambda}^{+}$ and $u_2\in \mathcal{N}_{\lambda}^{-}$ of $J$ and also $ w \in \mathcal{X}^{s}_{\sum_{D}}(\mathcal{C}_\Omega,dxdy)$, as in the statement.
Then for   $\epsilon > 0$ small enough, Lemma \ref{3.5} implies that
\begin{align}
0 &\leq \frac{J(u_1+\epsilon w) - J(u_1)}{\epsilon}\nonumber\\
 &=  \frac{1}{2\epsilon} \left(\|u_1+\epsilon w\|^2_{\mathcal{X}^{s}_{\sum_{D}}(\mathcal{C}_\Omega{,dxdy})}
 - \|u_1\|^2_{\mathcal{X}^{s}_{\sum_{D}}(\mathcal{C}_\Omega{,dxdy})}\right)
 \label{eq7}\\
&\quad  - \frac{\lambda}{\epsilon} \int_{\Omega} \left(|(u_1 + \epsilon w)(x,0)|^{1-q}- |u_1(x,0)|^{1-q}\right)dx
 - \frac{1}{\epsilon 2^*_s} \int_{\Omega} \left(|(u_1+\epsilon w)(x,0)|^{2^*_s} - |u_1(x,0)|^{2^*_s}\right) dx .\nonumber
\end{align}

An easy computation leads to
\begin{enumerate}
\item[(i)]
$\displaystyle{
 \lim_{\epsilon \to 0^+} \Bigg(\frac{\|u_1+ \epsilon w\|^2_{\mathcal{X}^{s}_{\sum_{D}}(\mathcal{C}_\Omega{,dxdy})}- \|u_1\|^2_{\mathcal{X}^{s}_{\sum_{D}}(\mathcal{C}_\Omega{,dxdy})}}{\epsilon}\Bigg)
=   2 \kappa_{s} \int_{\mathcal{C}_{\Omega}} y^{1-2s} \langle{\nabla u_1,\nabla w} \rangle \,dx\,dy},$
\item[(ii)]
$\displaystyle{
 \lim_{\epsilon \to 0^+} \int_{\Omega} \frac{\big(|(u_1+\epsilon w)(x,0)|^{2^*_s}-|u_1(x,0)|^{2^*_s}\big)}{\epsilon} dx
= 2^*_s \int_{\Omega} |u_1(x,0)|^{2^*_s-1} w(x,0) dx}$
\end{enumerate}
and from \eqref{eq7}, we get
 $$\frac{1}{\epsilon}\int_{\Omega} \Big( {|(u_1+\epsilon w)(x,0)|^{1-q} - |u_1(x,0)|^{1-q}} \Big) dx < \infty.$$
For each $x\in \Omega$, the quantity
\[\frac{1}{\epsilon}\left(\frac{{|(u_1+\epsilon w)(x,0)|^{1-q} - |u_1(x,0)|^{1-q}} }{1-q}\right)\]
is monotonically increasing as $\epsilon \downarrow 0$ and
$$
\lim_{\epsilon \downarrow 0}
\frac{|(u_1+\epsilon w)(x,0)|^{1-q} - |u_1(x,0)|^{1-q}}{\epsilon}
=\begin{cases}
	0 & \text{if } w(x,0)=0 \\
	u_1^{-q}(x,0) w(x,0) & \text{if }  w(x,0)>0 , u_1(x,0) > 0\\
	\infty & \text{if } w(x,0) > 0 ,\; u_1(x,0) =0.
    \end{cases}
$$
Thus, applying the monotone convergence theorem, we get $(u_1(x,0))^{-q}w(x,0) \in L^1(\Omega)$.
Supposing $\epsilon \downarrow 0$ on both sides of \eqref{eq7},
we get \eqref{eq4.2}.

Next, we shall prove all these properties for $u_2$. For each $\epsilon >0 $,
there exists $t_{\epsilon}>0$ such that $t_{\epsilon}(u_2+\epsilon w) \in \mathcal{N}^-_\lambda$, by {Lemma} \ref{ 3.2}.
Now  using Lemma \ref{3.5}-(2), for small enough $\epsilon > 0$ it holds
\[
J(t_{\epsilon}(u_2+\epsilon w)) \geq J(u_2) \geq J(t_{\epsilon}u_2).
\]
Hence $J(t_{\epsilon}(u_2+\epsilon w)) - J(t_{\epsilon}u_2) \geq 0$ and we get
\begin{align*}
 \lambda \int_{\Omega}  \Big(|(u_2+\epsilon w)(x,0)|^{1-q} - |u_2(x,0)|^{1-q}\Big) dx
&\leq \frac{t_{\epsilon}^{q}}{2} \Big(\|u_2+\epsilon w\|^2_{\mathcal{X}^{s}_{\sum_{D}}(\mathcal{C}_\Omega{,dxdy})}
- \|u_2\|^2_{\mathcal{X}^{s}_{\sum_{D}}(\mathcal{C}_\Omega{,dxdy})}\Big)\\
&\; - \frac{t_{\epsilon}^{2^*_s+q}}{2^*_s}
\int_{\Omega}\Big(|(u_2+\epsilon w)(x,0)|^{2^*_s} - |u_2(x,0)|^{2^*_s}\Big) dx.
\end{align*}
Since $t_\epsilon \to 1$ as $\epsilon \downarrow 0$, {using the } same arguments as above, we get {that} $u_2(.,0)^{-q}w(.,0) \in L^1(\Omega)$
and the required inequality \eqref{eq4.3}.
\end{proof} \medskip

Suppose {that} $\phi_{1,s}>0$ is the first eigenfunction of $(-\Delta)^s$ with homogeneous Dirichlet boundary condition,that is
$$\begin{cases}
    (-\Delta)^s \phi_{1,s} = \lambda_1^s \phi_{1,s}&\text{in}\; \Omega,\\
    \phi_{1,s} =0&{\text{in}\; \mathbb R^N\setminus \Omega}.
\end{cases}$$
Then, $0<\phi_{1,s}\in L^{\infty}(\Omega) \cap \mathcal{C}^\alpha(\overline{\Omega})$ for some $\alpha\in(0,1)$,
as shown in \cite{MR3023003}.
Its $s$-extension defined by $\Psi_{1,s}=\mathcal{E}_s[\phi_{1,s}] \in  L^{\infty}(\mathcal{C}_\Omega) \cap \mathcal{C}^\alpha(\overline{\mathcal{C}_\Omega})$ is associated  to the following problem in the spirit of \eqref{ext-op},
\begin{equation*}
\begin{cases} -\mbox{div}(y^{1-2s}{\nabla\Psi_{1,s}}\ ) = 0 &\mbox{ in }~ \mathcal{C}_{\Omega}\\
                       \Psi_{1,s} = 0~ &\mbox{ on }~{\partial_{L}} \mathcal{C}_{\Omega},\\
                      \dfrac{\partial \Psi_{1,s}}{\partial \nu^s} = \lambda_1^s\Psi_{1,s} &\mbox{ in }~ \Omega \times \{y=0 \} \end{cases}
\end{equation*}
and satisfies for all $0\leq \varphi \in C_0^\infty({\mathcal{C}_{\Omega},dxdy})$
\begin{equation}\label{efn-rel}
    \kappa_{s} \int_{\mathcal{C}_{\Omega}} y^{1-2s} \langle{\nabla \Psi_{1,s}(x,y), \nabla \varphi (x,y)} \rangle \,dxdy
 = \lambda_1^s\int_\Omega \Psi_{1,s}(x,0)\varphi(x,0) \,dx.
\end{equation}
Let $\beta > 0$ be so small that
\begin{equation}\label{def-f}
    f = \beta \Psi_{1,s}
\end{equation}
satisfies for all $0\leq \varphi \in C_0^\infty({\mathcal{C}_{\Omega},dxdy})$
 \begin{equation} \label{eq3.5}
-\kappa_{s} \int_{\mathcal{C}_{\Omega}} y^{1-2s} \langle{\nabla f, \nabla \varphi} \rangle \,dx\,dy
+\int_\Omega  (\lambda f^{-q}(x,0)+f^{2^*_s-1}(x,0))\varphi(x,0) dx  >0,
\end{equation}
 and  $f^{2^*_s-1+q} (x) \leq \lambda{q/(2^*_s-1)}$
for each $x \in \Omega$. We justify the choice of such $\beta>0$ using $0<\Psi_{1,s}\in L^{\infty}(\mathcal{C}_\Omega)$ implies for each $x\in \Omega$, $\Psi_{1,s}(x,0)\leq c ,~\text{for some constant}~  c>0$. This says that
\begin{equation}\label{e3.6}
 \Bigg(\frac{\lambda q}{(2^*_s-1)(\Psi_{1,s}(x,0))^{2^*_s-1+q}}\Bigg)^{\frac{1}{2^*_s-1+q}} \geq \Bigg(\frac{\lambda q}{(2^*_s-1) c^{2^*_s-1+q}}\Bigg)^{\frac{1}{2^*_s-1+q}} :=\eta~ .
\end{equation}
Hence we deduce that for each $x\in \Omega$
$$(\eta \Psi_{1,s}(x,0))^{2^*_s-1+q} =\frac{\lambda q}{(2^*_s-1) c^{2^*_s-1+q}} \Psi_{1,s}^{2^*_s-1+q} \leq \frac{\lambda q}{2^*_s-1 }.$$
This implies the claim for all  $\beta<\eta$. Hence $f^{2^*_s-1+q} (x) \leq \lambda {q/(2^*_s-1)}$ holds for $\beta <\eta$, whereas \eqref{eq3.5} holds true as $\beta \to 0^+$. Thus we can choose    $\beta>0$ so small that \eqref{eq3.5} and \eqref{def-f} both holds together.

\begin{lemma} \label{lem2.6}
For each non negative $w \in \mathcal{X}^{s}_{\sum_{D}}({\mathcal{C}_{\Omega},dxdy})$, there exists
{a sequence $(w_m)_m$} in $\mathcal{X}^{s}_{\sum_{D}} ({\mathcal{C}_{\Omega},dxdy})$
such that each $w_{m}$ is compactly supported in $\mathcal{C}_\Omega$, with $0 \leq w_1 \leq w_2 \leq \ldots$  and $w_m \to w$ strongly in $\mathcal{X}^{s}_{\sum_{D}}({\mathcal{C}_{\Omega},dxdy}) $ as $m\to \infty$.
\end{lemma}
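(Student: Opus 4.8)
The plan is to obtain $w_m$ by multiplying $w$ by a nested family of cutoffs $\eta_m\uparrow 1$ that are compactly supported away from the Dirichlet lateral part $\sum_D\times[0,\infty)$ of $\partial_L\mathcal C_\Omega$ and away from $y=\infty$: then monotonicity comes for free from $w\ge 0$, membership in the space and pointwise convergence are routine, and the only real work is the strong convergence, which I will reduce to two Hardy-type estimates.

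First I would fix the cutoffs. Writing $d(x):=\mathrm{dist}(x,\sum_D)$, for each $m\in\mathbb N$ choose a Lipschitz function $\eta_m$ on $\overline{\mathcal C_\Omega}$ with $0\le\eta_m\le\eta_{m+1}\le 1$, with $\eta_m\equiv 1$ on $\{d(x)\ge 1/m,\ y\le m\}$, with $\eta_m\equiv 0$ off $\{d(x)\ge 1/(2m),\ y\le 2m\}$, and with the gradient bounds $|\nabla\eta_m|\le C/d(x)$ on $\{d(x)<1/m\}$ and $|\nabla\eta_m|\le C/y$ on $\{y>m\}$; such $\eta_m$ exist because $\sum_D$ is a smooth compact submanifold of $\partial\Omega$, and $\eta_m\to 1$ pointwise on $\mathcal C_\Omega$ since any $(x,y)$ with $x\in\Omega$ has $d(x)>0$. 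Set $w_m:=\eta_m w$. Then $0\le w_m\le w_{m+1}$ because $0\le\eta_m\le\eta_{m+1}$ and $w\ge 0$; each $w_m$ has compact support in $(\Omega\cup\sum_\mathcal N)\times[0,\infty)$ and belongs to $\mathcal X^s_{\sum_D}(\mathcal C_\Omega,dxdy)$, since $w$ does and multiplication by a bounded Lipschitz function of bounded support vanishing near $\sum_D\times[0,\infty)$ preserves this space; and $w_m\to w$ pointwise.

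Next I would estimate $\|w_m-w\|$. From $\nabla(w_m-w)=(\eta_m-1)\nabla w+w\,\nabla\eta_m$,
\[
\|w_m-w\|^2_{\mathcal X^s_{\sum_D}(\mathcal C_\Omega,dxdy)}\le 2\kappa_s\int_{\mathcal C_\Omega}y^{1-2s}(\eta_m-1)^2|\nabla w|^2\,dxdy+2\kappa_s\int_{\mathcal C_\Omega}y^{1-2s}w^2|\nabla\eta_m|^2\,dxdy.
\]
The first integral tends to $0$ by dominated convergence, its integrand being bounded by $y^{1-2s}|\nabla w|^2\in L^1(\mathcal C_\Omega)$ and tending to $0$ a.e. Using that $\nabla\eta_m$ is supported in $\{d(x)<1/m\}\cup\{y>m\}$ together with the gradient bounds,
\[
\int_{\mathcal C_\Omega}y^{1-2s}w^2|\nabla\eta_m|^2\,dxdy\le C\int_{\{d(x)<1/m\}}y^{1-2s}\frac{w^2}{d(x)^2}\,dxdy+C\int_{\Omega\times(m,\infty)}y^{-1-2s}w^2\,dxdy,
\]
and since the two regions on the right shrink to null sets as $m\to\infty$, this tends to $0$ by dominated convergence — provided the majorants $y^{1-2s}w^2 d(x)^{-2}$ and $y^{-1-2s}w^2$ are integrable on $\{d(x)<1\}$ and on $\Omega\times(1,\infty)$, respectively.

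Establishing these two integrability facts is the hard part. Both are Hardy inequalities made possible because $w$ is an $\mathcal X^s_{\sum_D}$-limit of functions in $C^\infty_0((\Omega\cup\sum_\mathcal N)\times[0,\infty))$, so $w$ vanishes in the trace sense on $\sum_D\times[0,\infty)$ and $w(x,\cdot)\to 0$ as $y\to\infty$ for a.e.\ $x$. Near $\sum_D$ I would cover a tubular neighbourhood by finitely many charts (here compactness and smoothness of $\sum_D$ are used), flatten so that $d(x)$ is comparable to a normal coordinate $t\in(0,t_0)$, apply the one-dimensional Hardy inequality $\int_0^{t_0}t^{-2}g(t)^2\,dt\le C\int_0^{t_0}g'(t)^2\,dt$ (valid when $g(0)=0$) slicewise for a.e.\ fixed values of the remaining variables, and integrate against $y^{1-2s}$; this yields $\int_{\{d(x)<1\}}y^{1-2s}w^2 d(x)^{-2}\,dxdy\lesssim\|w\|^2_{\mathcal X^s_{\sum_D}(\mathcal C_\Omega,dxdy)}$, first for the approximating functions and then for $w$ by Fatou. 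For the tail at infinity I would use the weighted one-dimensional Hardy inequality $\int_1^\infty y^{-1-2s}g(y)^2\,dy\le C_s\big(g(1)^2+\int_1^\infty y^{1-2s}g'(y)^2\,dy\big)$, which holds because $-1-2s<-1$, applied slicewise in $y$ for a.e.\ $x$ and integrated in $x$, bounding $\int_\Omega w(x,1)^2\,dx$ by $\|w\|^2_{\mathcal X^s_{\sum_D}(\mathcal C_\Omega,dxdy)}$ through the trace embedding $\mathcal X^s_{\sum_D}(\mathcal C_\Omega,dxdy)\hookrightarrow L^2(\Omega)$ and $\int_0^1 t^{2s-1}\,dt<\infty$. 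Granting these, the displayed bounds give $w_m\to w$ strongly in $\mathcal X^s_{\sum_D}(\mathcal C_\Omega,dxdy)$, which completes the proof. If in addition one wants each $w_m$ bounded, it suffices to take $w_m:=\eta_m\min\{w,m\}$: non-negativity, monotonicity and pointwise convergence are unchanged, and $\min\{w,m\}\to w$ in $\mathcal X^s_{\sum_D}(\mathcal C_\Omega,dxdy)$ because $\int_{\{w>m\}}y^{1-2s}|\nabla w|^2\to 0$.
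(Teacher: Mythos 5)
Your proof is genuinely different from the paper's, and it is worth comparing the two routes. The paper avoids cutoffs entirely: it takes a sequence $(g_m)_m$ of non-negative smooth, compactly supported approximants given directly by the \emph{definition} of $\mathcal{X}^s_{\sum_D}(\mathcal{C}_\Omega,dxdy)$ as a closure, replaces them by the truncations $u_m=\min\{g_m,w\}$ (which still converge strongly, since $w-u_m=(w-g_m)^+$ and truncation is a contraction on the gradient), and then manufactures the monotone sequence recursively as $w_{m+1}=\max\{w_m,u_{s_{m+1}}\}$ with $s_{m+1}$ chosen so that the error drops below $1/(m+1)$. No Hardy inequality, no explicit analysis near $\sum_D$ or at $y=\infty$, only lattice operations and the density built into the definition of the space. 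Your route instead constructs the $w_m$ explicitly by multiplication with a nested family of Lipschitz cutoffs, which is more concrete and in some situations more useful (e.g.\ when one also wants uniform pointwise control of $w_m$), but it transfers all the difficulty into the two Hardy-type bounds you need to justify dominated convergence, and those are not cheap in this functional setting.

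There is one place where your sketch glosses over a genuine subtlety. For the horizontal Hardy bound you say you would cover a tubular neighbourhood of $\sum_D$, flatten so that $d(x)$ becomes a normal coordinate $t$, and apply the one-dimensional Hardy inequality slicewise. That works away from the interface $\tau'=\sum_D\cap\overline{\sum_{\mathcal N}}$, but near $\tau'$ the point $x$ may lie over the Neumann part $\sum_{\mathcal N}$, where $d(x)=\mathrm{dist}(x,\sum_D)$ behaves like distance to the codimension-two set $\tau'$ rather than like a normal coordinate to $\sum_D$; the cutoff region $\{d(x)<1/m\}$ does reach into that zone, and the function $w$ vanishes only on $\sum_D$, not on all of $\partial\Omega$. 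The relevant Hardy inequality, i.e.\ $\int_\Omega d_{\sum_D}^{-2}\,f^2\,dx\lesssim\int_\Omega|\nabla f|^2\,dx$ for $f$ vanishing on $\sum_D$ alone, does hold in this smooth mixed-boundary setting (it is a known result for functions vanishing on a sufficiently regular closed portion of the boundary), but the naive ``flatten and apply 1D Hardy in the normal direction'' argument does not by itself establish it near $\tau'$; one needs an additional angular (or capacity-theoretic) argument there. If you intend to carry out this cutoff strategy in full, you should either cite a mixed-boundary Hardy inequality explicitly or fill in the argument near $\tau'$. By contrast the paper's min/max argument sidesteps this entirely, which is presumably why the authors chose it.

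Your remaining steps are sound: the weighted one-dimensional Hardy estimate at infinity $\int_1^\infty y^{-1-2s}g^2\,dy\lesssim g(1)^2+\int_1^\infty y^{1-2s}(g')^2\,dy$ does satisfy the relevant Muckenhoupt condition (the crude pointwise bound on $|g(y)-g(1)|$ is not enough, but the Hardy-operator boundedness is), the control of $\int_\Omega w(x,1)^2\,dx$ via the trace at $y=0$ and $\int_0^1 t^{2s-1}\,dt<\infty$ is correct, and the monotonicity, support, and pointwise-convergence claims are immediate from the construction.
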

\begin{proof}
Let $0\leq w \in \mathcal{X}^{s}_{\sum_{D}} ({\mathcal{C}_{\Omega},dxdy})$ then by density argument we know there exists a  ${(g_m)_m} \subset {\mathcal{C}}^{\infty}_{c}(\mathcal{C}_\Omega)$
such that $g_m$ are non negative and $g_m \to w$  strongly in $\mathcal{X}^{s}_{\sum_{D}}({\mathcal{C}_{\Omega},dxdy})$ as $m\to \infty$.
Define $u_m =  \min \{g_m, w\}$ for each $m$ which strongly converges to $w$ in
$\mathcal{X}^{s}_{\sum_{D}} ({\mathcal{C}_{\Omega},dxdy})$. Let $w_1 = u_{s_1}$, choosing $s_1\in \mathbb N$ which satisfies
\[\|u_{s_1}-w\|_{\mathcal{X}^{s}_{\sum_{D}} ({\mathcal{C}_{\Omega},dxdy})} \leq 1.\]
Then $\max \{w_1, u_m\} \to w$ strongly in $\mathcal{X}^{s}_{\sum_{D}} ({\mathcal{C}_{\Omega},dxdy})$ as
$m \to \infty$.  We now choose $s_2\in \mathbb N$ satisfying
\[\|\max\{w_1,u_{s_2}\}-w\|_{\mathcal{X}^{s}_{\sum_{D}} ({\mathcal{C}_{\Omega},dxdy})} \leq 1/2.\]
Let $w_2=\max\{w_1,u_{s_2}\}$ then $\max \{w_2, u_m\} \to w$ strongly as $m \to \infty$.
Proceeding as above, in general we set
\[w_{m+1}= \max\{w_m,u_{s_{m+1}}\}.\]
It is easy to see that $w_m \in \mathcal{X}^{s}_{\sum_{D}} ({\mathcal{C}_{\Omega},dxdy})$  has compact support
for each $m$ and
\[\|\max\{w_m,u_{s_{m+1}}\}-w\|_{\mathcal{X}^{s}_{\sum_{D}} ({\mathcal{C}_{\Omega},dxdy})} \leq \frac{1}{m+1}\]
which implies that
{ $(w_m)_m$} converges strongly to $w$ in $\mathcal{X}^{s}_{\sum_{D}} ({\mathcal{C}_{\Omega},dxdy})$ as $m \to \infty$.
\end{proof} \medskip

With the aid of this, we prove our next crucial lemma.

\begin{lemma}\label{le05}
Assume $u_1$ and $u_2$ are minimizers of $J$ on $\mathcal{N}_{\lambda}^{+}$ and
$\mathcal{N}_{\lambda}^{-}$ respectively. Then there holds $u_1 \geq f$ and $u_2\geq f$ a.e. in $\mathcal{C}_\Omega$, where $f$ is defined in \eqref{def-f}.
 \end{lemma}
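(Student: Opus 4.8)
The argument is the same for $u_1$ and $u_2$, so let $u$ denote either non negative minimizer (by the remark following Lemma~\ref{3.5}, minimizers of $J$ over $\mathcal N_\lambda^{\pm}$ may be taken non negative), and put $w:=(f-u)^{+}$, where $f$ is as in \eqref{def-f}. Since $f=\beta\Psi_{1,s}>0$ and $u\ge 0$, the function $w$ belongs to $\mathcal X^{s}_{\sum_{D}}(\mathcal C_\Omega,dxdy)$, is non negative, and its trace is $w(x,0)=(f(x,0)-u(x,0))^{+}$, so that $0\le w\le f$ in $\mathcal C_\Omega$ and $0\le w(x,0)\le f(x,0)$ a.e. in $\Omega$. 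The plan is to prove $\|w\|_{\mathcal X^{s}_{\sum_{D}}(\mathcal C_\Omega,dxdy)}=0$; since this is a norm, that forces $w\equiv 0$, i.e. $u\ge f$ a.e. in $\mathcal C_\Omega$.

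First I would use the variational inequality \eqref{eq4.2} (respectively \eqref{eq4.3}) with this $w$; this is legitimate because $w\ge 0$ and $u^{-q}(\cdot,0)w(\cdot,0)\in L^{1}(\Omega)$ by Lemma~\ref{lem3.6}. Using $\nabla w=\nabla f-\nabla u$ a.e. on $\{f>u\}$ and $\nabla w=0$ a.e. on $\{f\le u\}$, one has $\langle\nabla u-\nabla f,\nabla w\rangle=-|\nabla w|^{2}$ a.e., hence $\kappa_{s}\int_{\mathcal C_\Omega}y^{1-2s}\langle\nabla u-\nabla f,\nabla w\rangle\,dxdy=-\|w\|^{2}_{\mathcal X^{s}_{\sum_{D}}(\mathcal C_\Omega,dxdy)}$. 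Next I would use the subsolution inequality \eqref{eq3.5} with $\varphi=w$. Since \eqref{eq3.5} is stated only for $\varphi\in C_0^{\infty}$, I would extend it to $\varphi=w$ by approximating $w$ from below by the nondecreasing, compactly supported sequence of Lemma~\ref{lem2.6}, using the monotone convergence theorem on the singular term $\int_\Omega f^{-q}(x,0)w(x,0)\,dx$ and dominated convergence on the remaining ones; this singular term is finite because $w\le f$ gives $f^{-q}(x,0)w(x,0)\le f(x,0)^{1-q}\in L^{\infty}(\Omega)$. Subtracting the (rearranged) inequality \eqref{eq3.5} from the form of \eqref{eq4.2} (resp.\ \eqref{eq4.3}) just obtained, and using the identity above, gives
\[
-\|w\|^{2}_{\mathcal X^{s}_{\sum_{D}}(\mathcal C_\Omega,dxdy)}\ \ge\ \int_\Omega\big[h(u(x,0))-h(f(x,0))\big]\,w(x,0)\,dx,\qquad h(t):=\lambda t^{-q}+t^{2^*_s-1}.
\]

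The step I expect to be delicate is signing the right-hand side, because $h$ is not monotone in general and on $\{f>u\}$ the critical power contributes with the wrong sign. This is precisely where the pointwise bound $f^{2^*_s-1+q}(x)\le\lambda q/(2^*_s-1)$ imposed in the construction of $f$ in \eqref{def-f} is used: $h$ is non-increasing on $(0,t_*]$ with $t_*:=\big(\lambda q/(2^*_s-1)\big)^{1/(2^*_s-1+q)}$, and the bound says exactly $f(x,0)\le t_*$ for a.e. $x\in\Omega$. On the set $\{f(\cdot,0)>u(\cdot,0)\}$, where $w(\cdot,0)>0$, one has $u(x,0)<f(x,0)\le t_*$ and, for a.e. such $x$, $u(x,0)>0$ (otherwise $u^{-q}(\cdot,0)w(\cdot,0)\notin L^{1}(\Omega)$, contrary to Lemma~\ref{lem3.6}); hence $h(u(x,0))\ge h(f(x,0))$ there, while $w(x,0)=0$ on the complement. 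Therefore the integrand above is $\ge 0$ a.e. in $\Omega$, so the right-hand side is $\ge 0$; combined with the left-hand side being $\le 0$, this yields $\|w\|_{\mathcal X^{s}_{\sum_{D}}(\mathcal C_\Omega,dxdy)}=0$, hence $w\equiv 0$ and $u\ge f$ a.e. in $\mathcal C_\Omega$. Applying this with $u=u_1$ and then $u=u_2$ gives the assertion.
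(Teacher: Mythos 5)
Your proof is correct and takes essentially the same route as the paper: both proofs test the variational inequality for the minimizer and the subsolution inequality \eqref{eq3.5} against $(f-u)^{+}$, exploit the gradient identity on $\{f>u\}$ to produce $-\|(f-u)^{+}\|^{2}$, and then use the monotonicity of $t\mapsto\lambda t^{-q}+t^{2^*_s-1}$ (guaranteed by the smallness condition $f^{2^*_s-1+q}\le\lambda q/(2^*_s-1)$) to sign the remaining term. The only cosmetic difference is that the paper carries the compactly supported approximating sequence $(w_k)_k$ of Lemma~\ref{lem2.6} through the whole computation and passes to the limit at the end, whereas you apply \eqref{eq4.2} directly to $w=(f-u)^{+}$ and invoke the approximation only to extend \eqref{eq3.5}; this is a slight streamlining but not a different argument.
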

\begin{proof}
An easy noticeable fact is

\begin{equation}\label{eq3.4}
\frac{d}{dr}(\lambda r^{-q}+r^{2^*_s-1})
= -q \lambda r^{-q-1} +(2^*_s-1)r^{2^*_s-2} \leq 0
\end{equation}
if and only if $ r^{2^*_s-1+q}\leq  \frac{\lambda q}{2^*_s-1}$.
By {Lemma} \ref{lem2.6}, we get existence of a non negative sequence
$(w_k)_k$ in $\mathcal{X}^{s}_{\sum_{D}}(\mathcal{C}_\Omega{,dxdy})$  such that $0\leq w_k\leq (f-u_1)^+$, each $w_k$ is compactly supported in $\mathcal{C}_\Omega$ and as $k\to \infty$
\[w_k\to (f-u_1)^+\;\text{strongly in } \mathcal{X}^{s}_{\sum_{D}}(\mathcal{C}_\Omega{,dxdy}).\]
Thus,   \eqref{eq4.2} and \eqref{eq3.5} imply that
\begin{align*}
 &\kappa_{s} \Big (\int_{\mathcal{C}_{\Omega}} y^{1-2s} \langle{\nabla u_1, \nabla w_{k}} \rangle \,dx\,dy
- \lambda \int_\Omega  u_1^{-q}(x,0)w_k(x,0)\,dx - \int_\Omega u_1^{2^*_s-1}(x,0) w_k(x,0) dx \Big)\\
 & -\kappa_{s} \Big ( \int_{\mathcal{C}_{\Omega}} y^{1-2s} \langle{\nabla f, \nabla w_{k}} \rangle \,dx\,dy
- \lambda \int_\Omega  f^{-q}(x,0)w_k(x,0)\,dx
- \int_\Omega f^{2^*_s-1}(x,0)w_k(x,0) dx \Big) \geq 0,
\end{align*}
which yields
\begin{align*}
&\kappa_{s} \int_{\mathcal{C}_{\Omega}} y^{1-2s}\Big ( \langle{\nabla u_1, \nabla w_{k}} \rangle - \langle{\nabla f, \nabla w_{k}} \rangle\Big ) dx\,dy \\
&- \int_\Omega \Big(\lambda u_1^{-q}(x,0)  + u_1^{2^*_s-1}(x,0)\Big)w_k(x,0)\,dx
+ \int_{\Omega} \Big(\lambda f^{-q}(x,0) +f^{2^*_s-1}(x,0)\Big)w_k(x,0)\,dx \geq 0.
\end{align*}
Since obviously $w_k(x,0)\to (f- u_1)^+(x,0)$ pointwise a.e. in $\Omega$, we set
$w_k(x,0) = (f- u_1)^+(x,0)+ o(1)$ as $k \to \infty$. Now considering the first term, we see that
\begin{align*}
&\kappa_{s} \Big (\int_{\mathcal{C}_{\Omega}} y^{1-2s} \langle{\nabla (u_1-f), \nabla w_{k}} \rangle \,dx\,dy \Big) = \kappa_{s} \Big (\int_{\mathcal{C}_{\Omega}} y^{1-2s} \langle{\nabla (u_1-f), \nabla(f-u_1)^+  } \rangle \,dx\,dy \Big) .
\end{align*}
Also $$\nabla(f -u_1)(x,0)=\nabla((f-u_1)^{+}(x,0)-(f-u_1)^{-}(x,0)).$$
Thus, we obtain
\begin{align*}
& \kappa_{s} \Big (\int_{\mathcal{C}_{\Omega}} y^{1-2s} \langle{\nabla (u_1-f), \nabla(f-u_1)^{+}} \rangle \,dx\,dy \Big)\\
 &=\kappa_{s} \Big (\int_{\mathcal{C}_{\Omega}} y^{1-2s} \langle{\nabla (f-u_1)^{-}, \nabla(f-u_1)^{+}\rangle\,dx\,dy} - \kappa_{s} \Big (\int_{\mathcal{C}_{\Omega}} y^{1-2s} \langle{\nabla (f-u_1)^{+}}, \nabla(f-u^{+}_1) \rangle \,dx\,dy \Big) \\
&= - \| (f - u_1)^{+}\|^{2}_{\mathcal{X}^{s}_{\sum_{D}}(\mathcal{C}_\Omega{,dxdy})}.
\end{align*}
Since $f^{2^*_s-1+q} (x) \leq  \frac{\lambda q}{2^*_s-1}$
for all $x \in \Omega$, using \eqref{eq3.4} we get
\begin{align*}
& \int_\Omega \Big((\lambda u^{-q}_1(x,0) + u^{2^*_s-1}_1(x,0))
 - (\lambda f^{-q}(x,0) + f^{2^*_s-1}(x,0))\Big)w_k(x,0)\,dx \\
& = \int_{\Omega \cap \{f \geq u_1\}} \Big((\lambda u^{-q}_1(x,0) + u^{2^*_s-1}_1(x,0))
 - (\lambda f^{-q}(x,0) + f^{2^*_s-1}(x,0))\Big)((f-u_1)^{+}(x,0)+o(1))\,dx  \\
& \geq 0.
 \end{align*}
Altogether we have
 \begin{align*}
 0 &\leq -\|(f-u_1 )^+\|^2_{\mathcal{X}^{s}_{\sum_{D}}(\mathcal{C}_\Omega{,dxdy})} -\int_\Omega (\lambda (u^{-q}_1(x,0) + u^{2^*_s-1}_1(x,0))w_k(x,0)\,dx\\
&\quad + \int_{\Omega} (\lambda f^{-q}(x,0) + f^{2^*_s-1}(x,0))w_k(x,0)\,dx +o(1)\\
  &\leq -\|(f-u_1 )^+\|^2_{\mathcal{X}^{s}_{\sum_{D}}(\mathcal{C}_\Omega{,dxdy})} + o(1)
  \end{align*}
As $k \to \infty$, this gives
$\|(f-u_1 )^+\|^2_{\mathcal{X}^{s}_{\sum_{D}}(\mathcal{C}_\Omega{,dxdy})} \leq 0$ which says that $u_1\geq f$ a.e. in $\mathcal{C}_\Omega$. The same argument shows that $u_2\geq f$ a.e. in $\mathcal{C}_\Omega$.
\end{proof} \medskip

\section{Proof of main result}
This section consists of the proof of the existence of $u_\lambda,\;v_\lambda$, which {are} minimizers of $J$ over $\mathcal N_\lambda^+$ and $\mathcal N_\lambda^-$, respectively, as mentioned in
{the} earlier section. Later on, we show that the minimizers
$u_\lambda,\;v_\lambda$ forms two weak solution of $(P_\lambda^*)$.

\subsection{Existence of first solution}

In this first subsection, we will show that the infimum of $J$ over $\mathcal{N}_{\lambda}^{+}$ is achieved.
\begin{lemma}\label{new-1}
It holds that $\inf_{w\in \mathcal{N}_{\lambda}^{+}} J(w) < 0$.
\end{lemma}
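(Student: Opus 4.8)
The plan is to exhibit a single test direction along which the fiber map $\Phi_w$ dips below $0$ at its first critical point $t_*$, thereby forcing $\inf_{\mathcal N_\lambda^+}J<0$. Fix any $w\in\mathcal X^s_{\sum_D}(\mathcal C_\Omega,dxdy)$ with $w\not\equiv 0$ (for instance $w=f$ from \eqref{def-f}, or simply the $s$-extension of $\phi_{1,s}$); by the remark preceding Lemma \ref{lem3.6} we may take $w\ge 0$. Recall from Lemma \ref{ 3.2} that for $\lambda\in(0,\lambda^*)$ there is a unique $t_*=t_*(w)>0$ with $t_*w\in\mathcal N_\lambda^+$, and that $\Phi_w$ is \emph{decreasing} on $(0,t_*)$. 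Since $\Phi_w(0^+)=0$ (because $1-q>0$, the singular term $t^{1-q}/(1-q)$ vanishes at $t=0$, as do the other two powers $t^2$ and $t^{2^*_s}$), the strict monotonicity on $(0,t_*)$ gives $\Phi_w(t_*)<\Phi_w(t)$ for $t$ near $0^+$, hence $J(t_*w)=\Phi_w(t_*)<0$. Therefore $\inf_{\mathcal N_\lambda^+}J\le J(t_*w)<0$.

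The only point requiring care is the claim that $\Phi_w(t)<0$ for small $t>0$, equivalently that $\Phi_w$ is strictly negative immediately to the right of $0$; I would get this directly from $\Phi_w'(t)=t^{-q}\{\mu_w(t)-\lambda M(w)\}$ together with the fact, already established in the proof of Lemma \ref{ 3.2}, that $\mu_w(t)-\lambda M(w)<0$ on $(0,t_*)$ (indeed $\mu_w(0)=0<\lambda M(w)$ and $t_*$ is the first zero of $\mu_w-\lambda M(w)$). Thus $\Phi_w'(t)<0$ on $(0,t_*)$, and since $\Phi_w$ is continuous with $\Phi_w(0^+)=0$, we conclude $\Phi_w(t_*)<0$. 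Alternatively, and perhaps more transparently, one can estimate $\Phi_w(t)\le \dfrac{t^2}{2}\|w\|^2 - \dfrac{\lambda t^{1-q}}{1-q}M(w)$ and note the right-hand side is negative for $t$ small because the exponent $1-q$ of the dominant (in the limit $t\to 0^+$) negative term is strictly less than $2$; choosing such a small $t_0$ and then noting $\inf_{\mathcal N_\lambda^+}J\le \Phi_w(t_*)\le \Phi_w(t_0)<0$ (using again that $\Phi_w$ decreases up to $t_*>t_0$, after checking $t_0<t_*$, or simply that $t_*$ realizes the minimum of $\Phi_w$ over $(0,t_{\max})$).

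I do not anticipate a genuine obstacle here: the statement is essentially a structural consequence of the shape of the fiber map for $0<q<1$, namely that the singular term $-\lambda t^{1-q}M(w)/(1-q)$ dominates the $t^2$ term near the origin and drags $\Phi_w$ below zero before it can turn around. The one bookkeeping step is to make sure the chosen test function has $M(w)=\int_\Omega|w(x,0)|^{1-q}\,dx>0$ and $N(w)=\int_\Omega|w(x,0)|^{2^*_s}\,dx>0$, which holds for any nonzero $w$ since then $w(\cdot,0)\not\equiv 0$ in $\Omega$ (the trace is injective on the relevant subspace, or one simply picks $w=\mathcal E_s[\phi_{1,s}]$ whose trace $\phi_{1,s}>0$). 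With $M(w)>0$ fixed, the inequality $\Phi_w(t_0)<0$ for suitably small $t_0$ is immediate, and the lemma follows.
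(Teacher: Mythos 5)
Your proof is correct, but it takes a genuinely different route from the paper's. You exhibit \emph{one} element of $\mathcal{N}_\lambda^+$ with negative energy: pick any nonzero $w\ge 0$, use Lemma~\ref{ 3.2} to get $t_*w\in\mathcal{N}_\lambda^+$, and argue that $\Phi_w$ is decreasing on $(0,t_*)$ while $\Phi_w(0^+)=0$, hence $\Phi_w(t_*)<0$. The paper instead shows the stronger fact that $J(w_0)<0$ for \emph{every} $w_0\in\mathcal{N}_\lambda^+$: it uses the Nehari constraint $\Phi'_{w_0}(1)=0$ to eliminate the singular term and rewrite
$J(w_0)=\big(\tfrac12-\tfrac{1}{1-q}\big)\|w_0\|^2+\big(\tfrac{1}{1-q}-\tfrac{1}{2^*_s}\big)\int_\Omega|w_0(x,0)|^{2^*_s}dx$, then uses $\Phi''_{w_0}(1)>0$ (again combined with the Nehari identity) to bound $\int_\Omega|w_0(x,0)|^{2^*_s}dx$ by $\tfrac{1+q}{2^*_s-1+q}\|w_0\|^2$, arriving at $J(w_0)\le\big(\tfrac{1}{2^*_s}-\tfrac12\big)\tfrac{1+q}{1-q}\|w_0\|^2<0$.

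What each buys: your argument is more elementary and depends only on the local shape of the fiber map near $t=0$ (the exponent $1-q<2$ makes the singular term dominate), plus the qualitative description of $\Phi_w$ from Lemma~\ref{ 3.2}. The paper's algebraic manipulation is a standard Nehari-manifold trick and yields the pointwise statement $J|_{\mathcal{N}_\lambda^+}<0$, which is a cleaner structural fact (and avoids having to argue continuity of $\Phi_w$ up to $0$ or integrate $\Phi'_w$). One small comment: your first passage ``$\Phi_w(t_*)<\Phi_w(t)$ for $t$ near $0^+$, hence $\Phi_w(t_*)<0$'' does not on its own give the strict inequality (it gives $\Phi_w(t_*)\le 0$ in the limit); you correctly flag this and supply the clean fix, either $\Phi'_w<0$ on $(0,t_*)$ or the explicit estimate $\Phi_w(t_0)\le\tfrac{t_0^2}{2}\|w\|^2-\tfrac{\lambda t_0^{1-q}}{1-q}M(w)<0$ for small $t_0<t_*$, so the argument is complete.
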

\begin{proof}
Fix $w_0 \in \mathcal{N}_{\lambda}^{+}$.
{Hence,} $\Phi''_{w_0}(1) >0$ i.e.
\[
\Big(\frac{1+q}{2^*_s-1+q} \Big)\|w_0\|^2_{\mathcal{X}^{s}_{\sum_{D}}(\mathcal{C}_\Omega{,dxdy})}
> \int_{\Omega}|w_0(x,0)|^{2^*_s} dx .
\]
Since $2^{*}_{s}>2$, we get at once that $J(w_0)<0$. Indeed,
\begin{align*}
J(w_0)
& = \Big( \frac12 - \frac{1}{1-q} \Big) \|w_0\|^2_{\mathcal{X}^{s}_{\sum_{D}}(\mathcal{C}_\Omega{,dxdy})}
+ \Big( \frac{1}{1-q} - \frac{1}{2^*_s} \Big)\int_{\Omega} |w_0(x,0)|^{2^*_s}\\
& \leq -  \frac{1+q}{2(1-q)} \|w_0\|^2_{\mathcal{X}^{s}_{\sum_{D}}(\mathcal{C}_\Omega{,dxdy})} + \frac{1+q}{2^*_s(1-q)} \|w_0\|^2_{\mathcal{X}^{s}_{\sum_{D}}(\mathcal{C}_\Omega{,dxdy})}\\
&= \Big( \frac{1}{2^*_s}-\frac{1}{2}\Big)\Big(\frac{1+q}{1-q}\Big) \|w_0\|^2_{\mathcal{X}^{s}_{\sum_{D}}(\mathcal{C}_\Omega{,dxdy})}<0.
\end{align*}
This proves  $\inf_{w\in \mathcal{N}_{\lambda}^{+}} J(w) < 0$.
\end{proof} \medskip

Let us define
\begin{align*}
     \Lambda := \sup\Bigg\{\lambda>0:\; \text{for each}\; &u\in {\mathcal{X}_{\sum_D}^s(\mathcal{C}_\Omega,dxdy)},\; \Phi_u \;\text{has two critical points and }\\
     & \sup\{\|u\|_{{\mathcal{X}_{\sum_D}^s(\mathcal{C}_\Omega,dxdy)} }: \; u\in \mathcal{N}_\lambda^{+}\} \leq  \Big(\frac{2^*_{s}}{2}\Big)^{\frac{2}{2^*_{s}-2}} S(s,N)^{{\frac{2^*_{s}}{2^*_{s}-2}}}\Bigg\}
     \end{align*}
     then $\Lambda>0$ due to {Corollary} \ref{cor3.3} and {Lemma} \ref{lem3.4}.
\begin{proposition}\label{u-lambda}
 There exists $u_\lambda \in \mathcal{N}_{\lambda}^{+}$,
with $J(u_\lambda) = \inf_{w\in \mathcal{N}_{\lambda}^{+}} J(w)$ for all $ \lambda\in (0,\Lambda)$.
\end{proposition}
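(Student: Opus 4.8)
Here is a plan for proving Proposition~\ref{u-lambda}.

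The plan is to carry out the direct method on $\mathcal{N}_{\lambda}^{+}$ and then identify the minimizer inside $\mathcal{N}_{\lambda}^{+}$ through the fibering information. Fix $\lambda\in(0,\Lambda)$ and set $c:=\inf_{w\in\mathcal{N}_{\lambda}^{+}}J(w)$, which is $<0$ by Lemma~\ref{new-1}. Choose a minimizing sequence $(w_{n})_{n}\subset\mathcal{N}_{\lambda}^{+}$; since $J(w)=J(|w|)$ and $|w|\in\mathcal{N}_{\lambda}^{+}$ whenever $w\in\mathcal{N}_{\lambda}^{+}$, we may take $w_{n}\ge 0$. By $(P_{1})$ of Lemma~\ref{lem3.4} the sequence is bounded in $\mathcal{X}^{s}_{\sum_{D}}(\mathcal{C}_{\Omega},dxdy)$, hence, up to a subsequence, $w_{n}\rightharpoonup u_{\lambda}\ge 0$ weakly, $w_{n}(\cdot,0)\to u_{\lambda}(\cdot,0)$ in $L^{r}(\Omega)$ for every $r\in[1,2^*_s)$ by the compact trace embedding, and $w_{n}(\cdot,0)\to u_{\lambda}(\cdot,0)$ a.e.\ in $\Omega$; in particular $\int_{\Omega}|w_{n}(\cdot,0)|^{1-q}\,dx\to\int_{\Omega}|u_{\lambda}(\cdot,0)|^{1-q}\,dx$. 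The first thing to check is $u_{\lambda}\not\equiv 0$: if $u_{\lambda}\equiv 0$, then $\int_{\Omega}|w_{n}(\cdot,0)|^{1-q}\,dx\to 0$, and rewriting $J$ on $\mathcal{N}_{\lambda}$ as in Theorem~\ref{thm3.1} gives $J(w_{n})=\big(\tfrac12-\tfrac1{2^*_s}\big)\|w_{n}\|^{2}-\lambda\big(\tfrac1{1-q}-\tfrac1{2^*_s}\big)\int_{\Omega}|w_{n}(\cdot,0)|^{1-q}\,dx$, so $\liminf_{n}J(w_{n})\ge 0$ (the second term vanishing), contradicting $J(w_{n})\to c<0$.

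The crux, and the step I expect to be the main difficulty, is to upgrade $w_{n}\rightharpoonup u_{\lambda}$ to strong convergence in $\mathcal{X}^{s}_{\sum_{D}}(\mathcal{C}_{\Omega},dxdy)$, i.e.\ to rule out the loss of compactness produced by the critical exponent $2^*_s$. Setting $v_{n}:=w_{n}-u_{\lambda}\rightharpoonup 0$, the Brezis--Lieb lemma combined with the a.e.\ convergence of the traces gives $\|w_{n}\|^{2}=\|u_{\lambda}\|^{2}+\|v_{n}\|^{2}+o(1)$ and $\int_{\Omega}|w_{n}(\cdot,0)|^{2^*_s}\,dx=\int_{\Omega}|u_{\lambda}(\cdot,0)|^{2^*_s}\,dx+\int_{\Omega}|v_{n}(\cdot,0)|^{2^*_s}\,dx+o(1)$. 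With $\ell:=\lim_{n}\|v_{n}\|^{2}$ and $m:=\lim_{n}\int_{\Omega}|v_{n}(\cdot,0)|^{2^*_s}\,dx$, passing to the limit in $\Phi'_{w_{n}}(1)=0$ and in $J(w_{n})\to c$ yields $\Phi'_{u_{\lambda}}(1)=m-\ell$ and $c=J(u_{\lambda})+\tfrac12\ell-\tfrac1{2^*_s}m$. Suppose, for contradiction, that $\ell>0$; then $m>0$, and the Sobolev trace inequality \eqref{sobolev-ineq} applied to $v_{n}$ (which vanishes on the Dirichlet part of $\partial_{L}\mathcal{C}_{\Omega}$) gives $\ell\ge \bar S\,m^{2/2^*_s}$ for the relevant best constant $\bar S$. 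Since $u_{\lambda}\ne 0$, Lemma~\ref{ 3.2} provides a projection $t_{*}u_{\lambda}\in\mathcal{N}_{\lambda}^{+}$ at which $\Phi_{u_{\lambda}}$ attains its minimum over $(0,t^{*}(u_{\lambda})]$; together with the sign of $\Phi'_{u_{\lambda}}(1)=m-\ell$ this forces $J(u_{\lambda})=\Phi_{u_{\lambda}}(1)\ge\Phi_{u_{\lambda}}(t_{*})=J(t_{*}u_{\lambda})\ge c$, hence $\tfrac12\ell\le\tfrac1{2^*_s}m$, i.e.\ $m\ge\tfrac{2^*_s}{2}\ell$. Combining $m\ge\tfrac{2^*_s}{2}\ell$ with $\ell\ge \bar S m^{2/2^*_s}$ bounds $\ell$ from below by a positive constant depending only on $N$ and $s$; but $\ell\le\lim_{n}\|w_{n}\|^{2}\le\sup\{\|w\|^{2}:w\in\mathcal{N}_{\lambda}^{+}\}$, and the uniform a priori bound over $\mathcal{N}_{\lambda}^{+}$ encoded in the definition of $\Lambda$ is precisely what keeps this supremum below that constant when $\lambda<\Lambda$ --- a contradiction. (The only remaining fibering configuration, in which $1$ lies beyond $t^{*}(u_{\lambda})$, is excluded by the same comparison after passing to the limit in the inequality $t_{\max}(w_{n})>1$, which holds on $\mathcal{N}_{\lambda}^{+}$.) Thus $\ell=m=0$, so $w_{n}\to u_{\lambda}$ strongly in $\mathcal{X}^{s}_{\sum_{D}}(\mathcal{C}_{\Omega},dxdy)$ and $w_{n}(\cdot,0)\to u_{\lambda}(\cdot,0)$ in $L^{2^*_s}(\Omega)$; this balancing is exactly what dictates the shape of the threshold $\Lambda$.

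With strong convergence in hand the conclusion is routine. All three integrals defining $J$ pass to the limit, so $J(w_{n})\to J(u_{\lambda})$ and $J(u_{\lambda})=c<0$; passing to the limit in the defining identity of $\mathcal{N}_{\lambda}$ gives $u_{\lambda}\in\mathcal{N}_{\lambda}$, and $\Phi''_{u_{\lambda}}(1)=\lim_{n}\Phi''_{w_{n}}(1)\ge 0$. If $\Phi''_{u_{\lambda}}(1)=0$, then $u_{\lambda}\in\mathcal{N}_{\lambda}^{0}=\{0\}$ by Corollary~\ref{cor3.3}, contradicting $u_{\lambda}\not\equiv 0$; hence $\Phi''_{u_{\lambda}}(1)>0$, i.e.\ $u_{\lambda}\in\mathcal{N}_{\lambda}^{+}$, and $J(u_{\lambda})=\inf_{w\in\mathcal{N}_{\lambda}^{+}}J(w)$, which is the assertion. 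Finally, since $J(u_{\lambda})=J(|u_{\lambda}|)$, one may keep $u_{\lambda}\ge 0$ throughout.
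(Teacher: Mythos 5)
Your overall strategy mirrors the paper's: take a minimizing sequence in $\mathcal{N}_{\lambda}^{+}$, extract a weak limit $u_{\lambda}$, show it is nonzero, apply the Brezis--Lieb decomposition to the norm and the critical trace integral, and use the a priori bound built into the definition of $\Lambda$ together with the Sobolev trace inequality to force the defect $\ell:=\lim\|w_{n}-u_{\lambda}\|^{2}$ to vanish. Your argument for $u_{\lambda}\not\equiv 0$ (via the representation of $J$ restricted to $\mathcal{N}_{\lambda}$ and the compact convergence of the subcritical term) is actually cleaner than the paper's.

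However, there is a genuine gap at the heart of the compactness step, and it is exactly the case the paper singles out. You obtain $m\ge\tfrac{2^{*}_{s}}{2}\,\ell$ from the chain $J(u_{\lambda})=\Phi_{u_{\lambda}}(1)\ge\Phi_{u_{\lambda}}(t_{*})\ge c$, but this chain only holds when $1\le t^{*}(u_{\lambda})$: on $(0,t^{*}]$ the map $\Phi_{u_{\lambda}}$ is minimized at $t_{*}$, but past $t^{*}$ it decreases to $-\infty$. If $t^{*}(u_{\lambda})<1$ then $\Phi'_{u_{\lambda}}(1)=m-\ell<0$, and there is no monotonicity argument giving $\Phi_{u_{\lambda}}(1)\ge\Phi_{u_{\lambda}}(t_{*})$; in fact in this regime one has $\ell>m$, so the inequality $m\ge\tfrac{2^{*}_{s}}{2}\ell$ you need is false and the Sobolev comparison cannot produce a lower bound on $\ell$. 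Your parenthetical remark does not close this: passing to the limit in $t_{\max}(w_{n})>1$ (equivalently $\Phi''_{w_{n}}(1)>0$) yields only $(1+q)\big(\|u_{\lambda}\|^{2}+\ell\big)\ge(2^{*}_{s}-1+q)\big(\int_{\Omega}|u_{\lambda}(\cdot,0)|^{2^{*}_{s}}dx+m\big)$, which involves the defect terms and does not imply $t_{\max}(u_{\lambda})\ge 1$, hence does not rule out $t^{*}(u_{\lambda})<1$. The paper treats this configuration (its Case (i)) with a separate device: introducing $g(t)=\Phi_{u_{\lambda}}(t)+\tfrac{a^{2}t^{2}}{2}-\tfrac{b^{2^{*}_{s}}t^{2^{*}_{s}}}{2^{*}_{s}}$, noting $g'(1)=0$ and $g'(t^{*})>0$ (since $a^{2}>b^{2^{*}_{s}}$ and $t^{*}<1$), concluding $g$ is nondecreasing on $[t^{*},1]$, and then chaining $c=\lim_{k}J(w_{k})=g(1)>g(t^{*})\ge\Phi_{u_{\lambda}}(t^{*})>\Phi_{u_{\lambda}}(t_{*})\ge c$ to reach a contradiction. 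You need an argument of this type, or an explicit proof that $t^{*}(u_{\lambda})\ge 1$, before your Sobolev-versus-$\Lambda$ comparison becomes applicable.
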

\begin{proof}
Let  $(w_{k})_{k} \subset \mathcal{N}_{\lambda}^{+}$ be a sequence such that $J(w_{k}) \to \inf_{w\in \mathcal{N}_{\lambda}^{+}} J(w)$ as $k \to \infty$.   {Lemma \ref{lem3.4} yields} that there {exists} $u_\lambda \in \mathcal{X}^{s}_{\sum_{D}}(\mathcal{C}_\Omega{,dxdy})$  such that
$w_{k} \rightharpoonup u_\lambda$ weakly in the space $\mathcal{X}^{s}_{\sum_{D}}(\mathcal{C}_\Omega{,dxdy})$, {up to a subsequence if necessary.
Put}  $W_k := (w_k - u_\lambda)$. {We claim} that   $w_k \to u_\lambda$ strongly in ${\mathcal{X}^{s}_{\sum_{D}}(\mathcal{C}_\Omega{,dxdy})}$
as $k\to \infty$. {Otherwise}, we {may} assume $$\|W_k\|^2_{\mathcal{X}^{s}_{\sum_{D}}(\mathcal{C}_\Omega{,dxdy})} \to a^2 \neq 0\;\text{and}\; \int_{\Omega} |W_k(x,0)|^{2^*_s}dx \to b^{2^*_s}\geq 0$$
as $k \to \infty$. By {the} Brezis-Lieb lemma we have
$$\lim_{k \to \infty}\Big(\|u_\lambda\|^2_{\mathcal{X}^{s}_{\sum_{D}}(\mathcal{C}_\Omega{,dxdy})} -\|w_k\|^2_{\mathcal{X}^{s}_{\sum_{D}}(\mathcal{C}_\Omega{,dxdy})}  + \|u_\lambda- w_k\|^2_{\mathcal{X}^{s}_{\sum_{D}}(\mathcal{C}_\Omega{,dxdy})} \Big) = 0.$$
{Since} $w_k\in\mathcal{N}_{\lambda}^{+}$, we obtain
\begin{equation}\label{eq8}
0  = \lim_{k \to \infty} \Phi'_{w_k}(1) = \Phi'_{u_\lambda}(1)+a^2 -b^{2^*_s},
\end{equation}
which implies
$$
\|u_\lambda\|^2_{\mathcal{X}^{s}_{\sum_{D}}(\mathcal{C}_\Omega{,dxdy})}+a^2 = \lambda \int_{\Omega}|u_\lambda(x,0)|^{1-q}dx + \int_{\Omega}|u_{\lambda}(x,0)|^{2^*_s}dx
+b^{2^*_s}.
$$
Before moving further, we claim that $u_\lambda \in \mathcal{X}^{s}_{\sum_{D}}(\mathcal{C}_\Omega{,dxdy})$ can not  be identically zero over ${\mathcal{C}_\Omega}$. If $u_\lambda\equiv 0$
 and $w_k\to u_\lambda$ strongly in ${\mathcal{X}^{s}_{\sum_{D}}(\mathcal{C}_\Omega{,dxdy})}$, then {Lemma} \ref{new-1} gives $0 > \inf J(\mathcal{N}^+_{\lambda}) = J(0)=0$,
giving a contradiction. On the other hand, since $a\neq 0$, so $u_\lambda \equiv 0$, \eqref{eq8} and $w_k\in \mathcal N_\lambda^+$ give
\begin{equation}\label{eq4.2new}
\inf_{w\in \mathcal{N}_{\lambda}^{+}} J(w) = \lim_{k\to \infty}J(w_k)
= \frac{a^2}{2} - \frac{b^{2^*_s}}{2^*_s}+ \frac{1}{1-q}\left( b^{2^*_s} -a^2 \right)
= \frac{a^2}{2} - \frac{b^{2^*_s}}{2^*_s} .
\end{equation}
But, {the fact that} $\|w_k(x,0)\|^2_{2^*_s}S(s,N) \leq \|w_k\|^2_{\mathcal{X}^{s}_{\sum_{D}}(\mathcal{C}_\Omega{,dxdy})}$
 {ensures that}
$a^2 \geq S(s,N)b^{2} $.
{Furthermore,} $a^2=b^{2^*_s}$ by \eqref{eq8}. {Hence,} \eqref{eq4.2new} implies
\[
0 > \inf_{w\in \mathcal{N}_{\lambda}^{+}} J(w)
= \Big(\frac{1}{2}-\frac{1}{2^*_s}\Big)a^2
\geq \frac{s}{N}S^{\frac{N}{2s}}(s,N),
\]
which is again a contradiction. Hence $u_\lambda \not\equiv 0$ in $\mathcal C_\Omega$  and the claim is proved.

Now applying Lemma \ref{ 3.2}, we state that there exists
{$t_*$, $t^*$, with} $0 < t_* < t^*$ such that
$\Phi'_{u_{\lambda}}(t_*)= \Phi'_{u_{\lambda}}(t^*) = 0$ and
$t_{*}u_{\lambda} \in \mathcal{N}^{+}_{\lambda}$, $t^*u_{\lambda} \in \mathcal{N}^{-}_{\lambda}$. Based on this, three possible cases arises as follows.\smallskip

\noindent\textbf{Case (i) ($t^* < 1$)} We consider the function
\[
g(t) = \Phi_{u_{\lambda}}(t)+ \frac{a^2t^2}{2} - \frac{b^{2^*_s} t^{2^*_s}}{2^*_s},
\]
for $t >0$. From equation \eqref{eq8}, we get $ g'(1) = \Phi'_{u_{\lambda}}(1)+a^2-b^{2^*_s} = 0$
and
\[
 g'(t^*) = \Phi'_{u_{\lambda}}(t^*)+t^*a^2-(t^*)^{2^*_s-1}b^{2^*_s}
= {t^*}(a^2 - (t^*)^{2^*_s-2}b^{2^*_s}) > {t^*}(a^2 - b^{2^*_s})
 > 0
\]
from which we can conclude that $g$ is
{an} increasing function on $[t^*,1]$. Hence,
\begin{align*}
\inf_{w\in \mathcal{N}_{\lambda}^{+}} J(w)
&= \lim_{k \to \infty}J(w_k) \geq \Phi_{u_{\lambda}}(1) + \frac{a^2}{2}- \frac{b^{2^*_s}}{2^*_s}
 = g(1) > g(t^*)\\
& =\Phi_{u_{\lambda}}(t^*) + \frac{a^2(t^*)^{2}}{2}
 - \frac{b^{2^*_s}(t^*)^{2^*_s}}{2^*_s}
\geq \Phi_{u_{\lambda}}(t^*) + \frac{(t^*)^{2}}{2} (a^2 - b^{2^*_s})\\
& > \Phi_{u_{\lambda}}(t^*) > \Phi_{u_{\lambda}}(t_*)
\geq \inf_{w\in \mathcal{N}_{\lambda}^{+}} J(w),
\end{align*}
which is a contradiction.
\smallskip

\noindent\textbf{Case (ii) ($t^* \geq 1$ and $\frac{a^2}{2}- \frac{b^{2^*_s}}{2^*_s} < 0$)}
{The inequalities}
$(a^2/2 - b^{2^*_s}/{2^*_s}) < 0$ and $S(s,N)b^2 \leq a^2$ {give}
$$a^2>\Big(\frac{2}{2^*_s}\Big)^{\frac{2}{2^*_s-2}}S^{\frac{2^*_s}{2^*_s-2}}(s,N).$$
{For each
$w_0 \in \mathcal{N}^{+}_{\lambda}$ we have $\Phi'_{w_0}(1)=0$, so that}
\begin{align*}
0  < \Phi''_{w_0}(1)
& = \|w_0\|^2_{\mathcal{X}^{s}_{\sum_{D}}(\mathcal{C}_\Omega{,dxdy})}+ q \lambda \int_{\Omega} |w_0(x,0)|^{1-q} dx - (2^*_s-1)
\int_{\Omega}|w_0(x,0)|^{2^*_s}dx \\
 & = (1+q) \|w_0\|^2_{\mathcal{X}^{s}_{\sum_{D}}(\mathcal{C}_\Omega{,dxdy})} -  (2^*_s-1+q) \int_{\Omega} |w_0(x,0)|^{2^*_s}dx,
\end{align*}
which implies
\[
 (1+q)\|w_0\|^2_{\mathcal{X}^{s}_{\sum_{D}}(\mathcal{C}_\Omega{,dxdy})}  > (2^*_s-1+q)\int_{\Omega}|w_0(x,0)|^{2^*_s}dx
=  (2^*_s-1+q)\|w_0(x,0)\|^{2^*_s}_{{2^*_s}}.
\]
The definition of $S(s,N)$ yields that
\[
 \|w_0\|^{2}_{\mathcal{X}^{s}_{\sum_{D}}(\mathcal{C}_\Omega{,dxdy})}  \leq  \Big(\frac{1+q}{2^*_s-1+q} \Big)^{\frac{2}{2^*_s-2}}
S^{\frac{2^*_s}{2^*_s-2}}(s,N).
\]
 Thus, using the fact that $0<\lambda <\Lambda$ we have
\[
\sup\{ \|w\|^2_{\mathcal{X}^{s}_{\sum_{D}}(\mathcal{C}_\Omega{,dxdy})}: w \in \mathcal{N}^+_{\lambda}\}
\leq \Big(\frac{2}{2^*_s}\Big)^{\frac{2}{2^*_s-2}} S^{\frac{2^*_s}{2^*_s-2}}(s,N)
< a^2 \leq \sup\{ \|w\|^2_{\mathcal{X}^{s}_{\sum_{D}}(\mathcal{C}_\Omega{,dxdy})}: w \in \mathcal{N}^+_{\lambda}\},
\]
which is again a contradiction.

Thus the only case which possibly holds is the following-\\
\noindent \textbf{Case (iii) ($t^* \geq 1$ and $\frac{a^2}{2}- \frac{b^{2^*_s}}{2^*_s} \geq 0$)} Here, by \eqref{eq8}  we have
$$
\inf_{w\in \mathcal{N}_{\lambda}^{+}} J(w)
= J(u_\lambda)+\frac{a^2}{2}-\frac{b^{2^*_s}}{2^*_s}
\geq J(u_\lambda) = \Phi_{u_\lambda}(1) \geq \Phi_{u_\lambda}(t_*)
\geq \inf J(\mathcal{N}^+_{\lambda}) .
$$
Clearly, this is possible when $t_* = 1$ and $(a^2/2 - b^{2^*_s}/{2^*_s}) = 0$
which yields $a=0$ by equation $\eqref{eq8}$ and $u_\lambda \in \mathcal{N}^+_{\lambda}$.
This shows that $w_k \to u_\lambda$ strongly in $\mathcal{X}^{s}_{\sum_{D}}(\mathcal{C}_\Omega{,dxdy})$ and also
$J(u_\lambda) = \inf_{w\in \mathcal{N}_{\lambda}^{+}} J(w)$.
This ends the proof.
\end{proof} \medskip
\begin{proposition}\label{prop4.2}
$u_\lambda$  is a positive weak solution of $(P_\lambda^*)$.
\end{proposition}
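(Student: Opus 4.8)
The idea is to start from the one–sided variational inequality \eqref{eq4.2}, which Lemma \ref{lem3.6} supplies for the minimizer $u_1=u_\lambda$ of $J$ over $\mathcal{N}_\lambda^{+}$, and to upgrade it to an equality by testing with truncations of $u_\lambda+\epsilon\phi$ and letting $\epsilon\to0^{+}$; the positivity of $u_\lambda$ will be inherited directly from Lemma \ref{le05}.

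First I would record positivity and integrability. Since, by the Remark following Lemma \ref{3.5}, the minimizer $u_\lambda$ may be taken non negative, Lemma \ref{le05} gives $u_\lambda\geq f=\beta\Psi_{1,s}$ a.e.\ in $\mathcal{C}_\Omega$; because $\Psi_{1,s}>0$ in $\mathcal{C}_\Omega$ and $\Psi_{1,s}(x,0)=\phi_{1,s}(x)>0$ in $\Omega$, it follows that $u_\lambda>0$ a.e.\ in $\mathcal{C}_\Omega$ and $u_\lambda(\cdot,0)>0$ a.e.\ in $\Omega$, so $u_\lambda$ meets the sign requirement in the definition of a weak solution of $(P_\lambda^*)$. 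Moreover, Lemma \ref{lem3.6} guarantees $u_\lambda^{-q}(\cdot,0)\phi(\cdot,0)\in L^{1}(\Omega)$ for every $\phi\in\mathcal{X}^{s}_{\sum_{D}}(\mathcal{C}_\Omega,dxdy)$, so every integral appearing below is finite.

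Next comes the core step. Fix $\phi\in\mathcal{X}^{s}_{\sum_{D}}(\mathcal{C}_\Omega,dxdy)$ and $\epsilon>0$, and plug the non negative function $w=(u_\lambda+\epsilon\phi)^{+}$, which belongs to $\mathcal{X}^{s}_{\sum_{D}}(\mathcal{C}_\Omega,dxdy)$ by the lattice properties of this space, into the inequality \eqref{eq4.2}. Writing $(u_\lambda+\epsilon\phi)^{+}=u_\lambda+\epsilon\phi+(u_\lambda+\epsilon\phi)^{-}$ and cancelling the ``diagonal'' contributions by means of the Nehari identity
\[
\kappa_s\int_{\mathcal{C}_\Omega}y^{1-2s}|\nabla u_\lambda|^2\,dxdy=\lambda\int_\Omega u_\lambda^{1-q}(x,0)\,dx+\int_\Omega u_\lambda^{2^*_s}(x,0)\,dx
\]
(valid since $u_\lambda\in\mathcal{N}_\lambda$), one is left with $\epsilon\,\ell(\phi)\geq-R_\epsilon$, where
\[
\ell(\phi):=\kappa_s\int_{\mathcal{C}_\Omega}y^{1-2s}\langle\nabla u_\lambda,\nabla\phi\rangle\,dxdy-\lambda\int_\Omega u_\lambda^{-q}(x,0)\phi(x,0)\,dx-\int_\Omega u_\lambda^{2^*_s-1}(x,0)\phi(x,0)\,dx,
\]
and $R_\epsilon$ collects the three terms carrying the factor $(u_\lambda+\epsilon\phi)^{-}$, all supported on $A_\epsilon:=\{u_\lambda+\epsilon\phi<0\}$. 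On $A_\epsilon$ the singular and critical contributions to $R_\epsilon$ have sign $\leq0$, while $\nabla(u_\lambda+\epsilon\phi)^{-}=-\nabla u_\lambda-\epsilon\nabla\phi$ there, so that $R_\epsilon\leq-\epsilon\kappa_s\int_{A_\epsilon}y^{1-2s}\langle\nabla u_\lambda,\nabla\phi\rangle\,dxdy\leq\epsilon\,c_\epsilon$ with $c_\epsilon:=\kappa_s\int_{A_\epsilon}y^{1-2s}|\nabla u_\lambda|\,|\nabla\phi|\,dxdy$. Since $u_\lambda>0$ a.e., $A_\epsilon$ shrinks to a null set as $\epsilon\to0^{+}$, whence $c_\epsilon\to0$ by dominated convergence (with dominating function $y^{1-2s}|\nabla u_\lambda|\,|\nabla\phi|\in L^{1}(\mathcal{C}_\Omega)$). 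Therefore $\ell(\phi)\geq-R_\epsilon/\epsilon\geq-c_\epsilon$, and letting $\epsilon\to0^{+}$ yields $\ell(\phi)\geq0$. Running the same argument with $-\phi$ in place of $\phi$ gives $\ell(\phi)\leq0$, hence $\ell(\phi)=0$ for all $\phi\in\mathcal{X}^{s}_{\sum_{D}}(\mathcal{C}_\Omega,dxdy)$; this is exactly the weak formulation of $(P_\lambda^*)$, and together with the positivity recorded above it proves that $u_\lambda$ is a positive weak solution.

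The part requiring care is the estimate $R_\epsilon\leq\epsilon\,c_\epsilon$ with $c_\epsilon\to0$: one must split $(u_\lambda+\epsilon\phi)^{+}$ cleanly, observe that the singular integral $\int_{A_\epsilon}u_\lambda^{-q}(x,0)(u_\lambda+\epsilon\phi)^{-}(x,0)\,dx$ is finite (Lemma \ref{lem3.6}) and enters with the favourable sign, and control the gradient term over the shrinking set $A_\epsilon$ by absolute continuity of the integral; everything else is the routine bookkeeping of the Nehari identity and the standard fact that $(u_\lambda+\epsilon\phi)^{+}$ is an admissible non negative test function in $\mathcal{X}^{s}_{\sum_{D}}(\mathcal{C}_\Omega,dxdy)$.
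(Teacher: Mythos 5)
Your proof is correct, but it takes a genuinely different route from the paper's. The paper fixes $\zeta\in C^{\infty}_c(\mathcal{C}_\Omega)$ and uses Lemma \ref{le05}, together with the strict positivity and continuity of $f=\beta\Psi_{1,s}$, to get a uniform bound $u_\lambda\geq M'>0$ on $\operatorname{supp}\zeta$; then $u_\lambda+\epsilon\zeta\geq 0$ for small $\epsilon$ even for signed $\zeta$, the fiber-map argument of Lemma \ref{3.5}(1) gives $J(u_\lambda+\epsilon\zeta)\geq J(u_\lambda)$, and the one-sided derivative yields $\ell(\zeta)\geq 0$ for all such $\zeta$, hence $\ell(\zeta)=0$. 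You instead start from the inequality \eqref{eq4.2} for nonnegative tests and upgrade it via the truncation device: test with $(u_\lambda+\epsilon\phi)^{+}$, cancel the diagonal using the Nehari identity $\ell(u_\lambda)=0$, and absorb the remainder over $A_\epsilon=\{u_\lambda+\epsilon\phi<0\}$ by dominated convergence, since $u_\lambda>0$ a.e.\ forces $|A_\epsilon|\to0$. Both approaches ultimately rest on Lemma \ref{le05}, but differently: the paper needs the lower bound to be \emph{uniform} on compacts (hence compactly supported $\zeta$), whereas you need only that $\{u_\lambda=0\}$ is a null set. Your version is a bit longer but yields the weak identity directly against any $\phi\in\mathcal{X}^{s}_{\sum_{D}}(\mathcal{C}_\Omega,dxdy)$, which the paper reaches separately only in Lemma \ref{lem6.1}. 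Two small care points worth flagging: the statement of Lemma \ref{lem3.6} reads ``for all $w$'' but its proof covers only nonnegative $w$, so your choice of the nonnegative test $(u_\lambda+\epsilon\phi)^{+}$ is exactly right; and the integrability $u_\lambda^{-q}(\cdot,0)\phi(\cdot,0)\in L^{1}(\Omega)$ for signed $\phi$ should strictly be obtained by applying Lemma \ref{lem3.6} to $\phi^{+}$ and $\phi^{-}$ separately.
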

\begin{proof}
Fix $\zeta \in C^{\infty}_c(\mathcal{C}_{\Omega})$. By Lemma \ref{le05}, since $f \in L^{\infty}(\mathcal{C}_\Omega) \cap \mathcal{C}^\alpha(\overline{\mathcal{C}_\Omega})$ for some $\alpha \in (0,1)$,
we can find a constant $ M' >0$ such that $u_\lambda \geq M'>0$ on
the support of $\zeta$.
Then $u_{\lambda}+\epsilon \zeta\ge 0$ for  $\epsilon$
small enough. {The} same arguments as in the proof of
Lemma \ref{3.5} {shows that} $ J(u_\lambda+\epsilon \zeta) \geq J(u_\lambda)$
for sufficiently small $\epsilon >0$. Hence, we obtain
\begin{align*}
0 & \leq \lim _{\epsilon \to 0} \frac{J(u_\lambda+\epsilon\zeta) - J(u_\lambda)}{\epsilon}\\
&= \kappa_{s} \int_{\mathcal{C}_{\Omega}} y^{1-2s} \langle{\nabla u_\lambda, \nabla \zeta} \rangle \,dx\,dy
 - \lambda \int_\Omega  u^{-q}_{\lambda}(x,0){\zeta(x,0)}\,dx
 - \int_\Omega u^{2^*_s-1}_{\lambda}(x,0){\zeta(x,0)}dx  \,.
\end{align*}
Since $\zeta \in C^{\infty}_c(\mathcal{C}_{\Omega})$ is arbitrary, {ths
implies} that $u_\lambda$
is a positive weak solution of $(P_\lambda^*)$.
\end{proof} \medskip

\subsection{Existence of {a} second solution}
In this second subsection, we will show that the infimum of $J$ over $\mathcal{N}_{\lambda}^{-}$ is achieved. We need some technical lemmas to prove this. Let us recall the family of extremal functions defined in \eqref{talenti} as follows
$$ u_{\epsilon}(x)= \frac{\epsilon^{\frac{N-2s}{2}}} {(\epsilon^2 + |x|^2)^{\frac{N-2s}{2}}},~~ x\in \mathbb{R}^N,$$
with arbitrary $\epsilon>0.$ Let ${\psi_{0}} \in C^{\infty}(\mathbb{R_{+}}\cup\{0\})$ be a non-increasing cut-off function  satisfying
\begin{equation*}
\psi_{0}(z) =
\begin{cases}   1,&~\text{if} ~0\leq z \leq \frac12 \\
                0,&~\text{if}~ z \geq 1.
                        \end{cases}
\end{equation*}
Without loss of generality, we assume {that} $0 \in \Omega$ and we choose $\rho >0$ {so small  that} $B^{+}_{\rho}(0) \subseteq \mathcal{C}_{\Omega}$, where  $B^{+}_{\rho}(0) = \{z\in \mathcal{C}_{\Omega} : 0\leq |z| \leq \rho\}$.
Define the function
$$\psi_{\rho}(p_1,p_2) = \psi_{0}\left(\frac{r_{p_1 p_2}}{\rho}\right),$$
where $r_{p_1 p_2}= |(p_1,p_2)| = (|p_1|^2 + p_2^2)^{\frac12}, \text{for}~ p_1, p_2 \in \mathbb{R}^N$. {Put}
$$\tilde{w}_\epsilon(x,y) :=  \psi_{\rho}(x,y) w_\epsilon(x,y)
\quad\mbox{for all }x,\,y\in\mathbb R^N,$$
where
$w_\epsilon$ is given by $\mathcal{E}_{s}[u_{\epsilon}]= w_\epsilon.$  The following comprises the main feature of $w_\epsilon$
$$ S(s,N)= \frac{\int_{\mathbb{R}^{N+1}_{+}} y^{1-2s}|\nabla\mathcal{E}_{s}[u_{\epsilon}](x,y)|^2dxdy}{\Bigg({\int_{\mathbb{R}^N}}|\mathcal{E}_{s}[u_{\epsilon}](x,0)|^{2^*_s} dx \Bigg)^{2/2^*_s} }$$
and
$$ \int_{\mathbb{R}^N}|\mathcal{E}_{s}[u_{\epsilon}](x,0)|^{2^*_s}dx = S^{\frac{N}{2s}}(s,N).$$

\begin{lemma}\label{5.1}
For sufficiently small $\epsilon >0$ it results
 $$\sup \{J(u_\lambda + t\tilde{w}_\epsilon): t\geq 0\}
< J(u_\lambda) + \frac{s}{N}S^{\frac{N}{2s}}(s,N),$$
where $u_\lambda$ is defined in Proposition \ref{u-lambda}.
\end{lemma}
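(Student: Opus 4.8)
The plan is to expand $J(u_\lambda+t\tilde w_\epsilon)$ along the ray $t\ge 0$ and bound the supremum above by $J(u_\lambda)+\tfrac{s}{N}S^{N/2s}(s,N)$. First I would record the elementary facts about the truncated extremal: the classical Brezis--Nirenberg-type estimates
\[
\|\tilde w_\epsilon\|^2_{\mathcal{X}^s_{\sum_D}(\mathcal{C}_\Omega,dxdy)}
= S^{N/2s}(s,N)+O(\epsilon^{N-2s}),
\qquad
\int_\Omega |\tilde w_\epsilon(x,0)|^{2^*_s}dx = S^{N/2s}(s,N)+O(\epsilon^{N}),
\]
together with the lower bound $\int_\Omega |\tilde w_\epsilon(x,0)|^{\,r}dx\gtrsim \epsilon^{\,\ast}$ for the cross terms that will be needed, all of which are available from \cite{MR3023003,MR3912863} for the spectral fractional setting. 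Since $u_\lambda$ is bounded below by $f>0$ on a neighbourhood of $0$ (Lemma \ref{le05} and $f=\beta\Psi_{1,s}\in C^\alpha$), one has $u_\lambda\ge c>0$ on $\mathrm{supp}\,\tilde w_\epsilon$, which legitimizes expanding the singular term $|u_\lambda+t\tilde w_\epsilon|^{1-q}$ without blow-up.

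\textbf{Key steps.} (1) Show the supremum is attained at some $t_\epsilon>0$ in a compact interval $[\,a_0,a_1\,]\subset(0,\infty)$, uniformly in small $\epsilon$: coercivity of $t\mapsto J(u_\lambda+t\tilde w_\epsilon)$ (the $-t^{2^*_s}$ term dominates) gives an upper cutoff, while $J(u_\lambda+t\tilde w_\epsilon)>J(u_\lambda)$ for small $t>0$ — because $u_\lambda$ is a local minimum on $\mathcal{N}_\lambda^+$ in the sense of Lemma \ref{3.5}(1), using $\tilde w_\epsilon\ge0$ — forces $t_\epsilon$ away from $0$. (2) At $t=t_\epsilon$, use the convexity/concavity structure: write
\[
J(u_\lambda+t\tilde w_\epsilon)\le J(u_\lambda)+\Big[\,\tfrac{t^2}{2}\|\tilde w_\epsilon\|^2 - \tfrac{1}{2^*_s}\!\int_\Omega |t\tilde w_\epsilon(x,0)|^{2^*_s}\Big] + (\text{cross terms}),
\]
where the bracket is obtained by discarding the favourable singular contribution (since $u_\lambda,\tilde w_\epsilon\ge0$, one has $|u_\lambda+t\tilde w_\epsilon|^{1-q}\ge u_\lambda^{1-q}$, so $-\lambda\int(\cdots)\le -\lambda\int u_\lambda^{1-q}$) and by the superadditivity inequality $(A+B)^{2^*_s}\ge A^{2^*_s}+B^{2^*_s}+2^*_s A^{2^*_s-1}B$ for the critical term, the linear leftover being absorbed by the identity $\kappa_s\!\int y^{1-2s}\langle\nabla u_\lambda,\nabla\tilde w_\epsilon\rangle = \lambda\!\int u_\lambda^{-q}\tilde w_\epsilon+\int u_\lambda^{2^*_s-1}\tilde w_\epsilon$ from Lemma \ref{prop4.2}. (3) Maximize the bracket $h(t)=\tfrac{t^2}{2}\|\tilde w_\epsilon\|^2-\tfrac{t^{2^*_s}}{2^*_s}\int|\tilde w_\epsilon(x,0)|^{2^*_s}$ explicitly: its maximum value is $\tfrac{s}{N}\big(\|\tilde w_\epsilon\|^2/(\int|\tilde w_\epsilon(x,0)|^{2^*_s})^{2/2^*_s}\big)^{N/2s}$, which by the Sobolev quotient estimates above equals $\tfrac{s}{N}S^{N/2s}(s,N)+O(\epsilon^{N-2s})$. (4) Finally show the remaining cross terms, notably the negative contribution $-c\,t^{2^*_s-1}\!\int_\Omega u_\lambda\,\tilde w_\epsilon^{2^*_s-1}dx$ (and a higher-order term), beat the positive error $O(\epsilon^{N-2s})$: this needs the lower estimate $\int_\Omega \tilde w_\epsilon^{2^*_s-1}(x,0)\,dx\gtrsim \epsilon^{(N-2s)/2}$, which for $N>2s$ and $s\in(1/2,1)$ dominates $\epsilon^{N-2s}$, since $(N-2s)/2<N-2s$. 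Combining, $\sup_t J(u_\lambda+t\tilde w_\epsilon)< J(u_\lambda)+\tfrac{s}{N}S^{N/2s}(s,N)$ for $\epsilon$ small.

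\textbf{Main obstacle.} The delicate point is step (4): one must carefully match the order of the favourable cross term against the $O(\epsilon^{N-2s})$ loss coming from truncating the extremal, keeping track that $u_\lambda$ is merely H\"older-continuous and strictly positive near $0$ (so $\int u_\lambda\tilde w_\epsilon^{2^*_s-1}$ is genuinely of order $\epsilon^{(N-2s)/2}$ and not smaller). A secondary technical nuisance is justifying the expansion of the singular term $|u_\lambda+t\tilde w_\epsilon|^{1-q}$ and the differentiation under the integral uniformly for $t$ in the compact range — this is exactly where Lemma \ref{le05} ($u_\lambda\ge f>0$) is indispensable. Everything else (coercivity, the algebraic maximization of $h$, the standard asymptotics of $\|\tilde w_\epsilon\|^2$ and $\|\tilde w_\epsilon(\cdot,0)\|_{2^*_s}$) is routine.
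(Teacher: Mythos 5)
Your overall strategy (expand $J(u_\lambda+t\tilde w_\epsilon)-J(u_\lambda)$, subtract the first-order term via the weak-solution identity, maximize the quadratic-vs-critical remainder $h(t)$, and beat the truncation error using a negative cross term) is the same as the paper's, and steps (1) and (3) are sound. But step (2) has a genuine gap that carries through to step (4).

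You claim to drop the singular contribution via $|u_\lambda+t\tilde w_\epsilon|^{1-q}\ge u_\lambda^{1-q}$ and then say ``the linear leftover [is] absorbed by the identity.'' It is not. After expanding $\tfrac12\|u_\lambda+t\tilde w_\epsilon\|^2$ and applying the superadditivity inequality to the critical term, the leftover linear part is
\[
t\,\kappa_s\!\int y^{1-2s}\langle\nabla u_\lambda,\nabla\tilde w_\epsilon\rangle\,dxdy \;-\; t\!\int_\Omega u_\lambda^{2^*_s-1}(x,0)\tilde w_\epsilon(x,0)\,dx \;=\; t\lambda\!\int_\Omega u_\lambda^{-q}(x,0)\tilde w_\epsilon(x,0)\,dx,
\]
a strictly \emph{positive} quantity; it does not cancel. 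Since $u_\lambda\ge f\ge c_0>0$ near the origin, $u_\lambda^{-q}$ is bounded and $\int_\Omega \tilde w_\epsilon(x,0)\,dx\sim \epsilon^{(N-2s)/2}$, so this leftover is of order $\epsilon^{(N-2s)/2}$, not the $O(\epsilon^{N-2s})$ you ascribe to ``the positive error'' in step (4). That is exactly the same order as the favorable cross term $-c\,t^{2^*_s-1}\!\int u_\lambda\tilde w_\epsilon^{2^*_s-1}\sim -\epsilon^{(N-2s)/2}$ you later invoke, so the argument reduces to an uncontrolled constant comparison between two competing $\epsilon^{(N-2s)/2}$ terms. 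The paper avoids this: instead of discarding the singular part, it keeps its first-order Taylor term (which then cancels exactly against $t\lambda\int u_\lambda^{-q}\tilde w_\epsilon$, because the identity from Lemma \ref{lem6.1} holds with the derivative \emph{included}) and bounds the remaining concavity error by
\[
\lambda\Bigl(\tfrac{(a+b)^{1-q}}{1-q}-\tfrac{a^{1-q}}{1-q}-a^{-q}b\Bigr)\ge -\tfrac{\eta_1 b^\eta}{c_3},\qquad a\ge m,\ b\ge0,\ 1<\eta<\tfrac{N}{N-2s},
\]
which after using \eqref{asymp-3} contributes only $O(\epsilon^{\eta(N-2s)/2})$, a genuinely higher order than $\epsilon^{(N-2s)/2}$. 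Only then can the negative cross term from the refined elementary inequality (valid for $t\tilde w_\epsilon\ge 1$, hence the paper's piecewise function $H_\epsilon$ on $[0,1/2)$ and $[1/2,\infty)$) dominate all errors. To repair your proof, replace ``discard the singular term'' with ``Taylor-expand the singular term and bound the second-order remainder by $O(b^\eta)$,'' mirroring the paper's inequality; the rest of your structure then goes through.
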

\begin{proof}
Fix $\epsilon>0$. {Since} $0\leq\psi_\rho\leq1$ we {have}
\begin{equation*}
\begin{split}
\kappa_{s} \int_{\mathcal{C}_{\Omega}} y^{1-2s}|{\nabla \tilde{w}_\epsilon(x,y)|^{2}}  \,dx\,dy&\leq \kappa_{s} \int_{\mathbb{R}^{N+1}_{+}} y^{1-2s}|{\nabla {w}_\epsilon(x,y)|^{2}} \,dx\,dy\\
&+\kappa_{s} \int_{\mathcal{C}_{\Omega}} y^{1-2s}|{\nabla \psi_{\rho}(x,y)|^{2}}{ |{w}_\epsilon(x,y)|^{2}}\,dx\,dy\\
&+ 2\kappa_s\int_{\mathcal{C}_{\Omega}} y^{1-2s} \langle{\nabla \psi_{\rho}(x,y)}{ {w}_\epsilon(x,y)},\nabla{w}_\epsilon(x,y)\psi_{\rho}(x,y)\rangle\,dx\,dy
\end{split}
\end{equation*}
Now, following the arguments of
the proof of Lemma 3.4 of \cite{MR3912863}, we obtain
$$\kappa_{s} \int_{\mathcal{C}_{\Omega}} y^{1-2s}|{\nabla \tilde{w}_\epsilon(x,y)|^{2}}  \,dx\,dy
\leq S^\frac{N}{2s}(s,N)+  o\left(\left(\frac{\epsilon}{\rho}\right)^{N-2s} \right).$$
Thus, we can choose constant $c_1 > 0$ such that
 \begin{equation}\label{asymp-1}
 \kappa_{s} \int_{\mathcal{C}_{\Omega}} y^{1-2s}|{\nabla \tilde{w}_\epsilon(x,y)|^{2}}|  \,dx\,dy
\leq S^\frac{N}{2s}(s,N)+ c_{1}\left(\frac{\epsilon}{\rho}\right)^{N-2s}
\end{equation}
Concerning the critical exponent term we see that
\begin{align*}
\int_{\Omega} |\tilde{w}_\epsilon(x,0)|^{2^{\ast}_{s}}dx
& =\int_{\Omega} |\psi_{\rho}(x,0){w}_\epsilon(x,0)|^{2^*_s}dx\\
& =\int_{B^+_\frac{\rho}{2}} \Big|\psi_{0}  \left(\frac{|x|}{\rho}\right)  u_{\epsilon}(x)\Big|^{2^*_s}dx
 +\int_{B^+_{\rho} \setminus B^+_\frac{\rho}{2}}\Big|\psi_{0}  \left(\frac{|x|}{\rho}\right)  u_{\epsilon}(x)\Big|^{2^*_s}dx \\
& = \int_{\mathbb{R}^N} |u_\epsilon|^{2^*_s}dx
 - \int_{\mathbb{R}^N \setminus B^+_\frac{\rho}{2}} |u_\epsilon|^{2^*_s}dx
 + \int_{B^+_{\rho} \setminus B^+_\frac{\rho}{2}} \Big|\psi_{0}  \left(\frac{|x|}{\rho}\right)  u_{\epsilon}(x)\Big|^{2^*_s}dx\\
& \geq \int_{\mathbb{R}^N} |u_\epsilon|^{2^*_s}dx
 - \int_{\mathbb{R}^N \setminus B^+_\frac{\rho}{2}} |u_\epsilon|^{2^*_s}dx=\int_{\mathbb{R}^N} |u_\epsilon|^{2^*_s}dx
 - \int_{|x|>\frac{\rho}{2}} \frac{\epsilon^N}{(\epsilon^2+|x|^2)^N}dx \\
 & = \int_{\mathbb{R}^N} |u_\epsilon|^{2^*_s}dx
 - \int_{\frac{\rho}{2\epsilon}}^{\infty} \frac{t^{N-1}}{(1+t^2)^N}dt\geq \int_{\mathbb{R}^N} |u_\epsilon|^{2^*_s}dx
 - c\int_{\frac{\rho}{2\epsilon}}^{\infty} \frac{1}{t^{N+1}}dt,
\end{align*}
for an appropriate constant $c>0$.
Therefore
\begin{equation}\label{asymp-2}
   \int_{\Omega} |\tilde{w}_\epsilon(x,0)|^{2^{\ast}_{s}}dx \geq  S^\frac{N}{2s}(s,N) - c_2 \left(\frac{\epsilon}{\rho} \right)^N
\end{equation}
for some {constant} $c_2 >0.$
 Let us fix {$\eta$, with}
 $1 < \eta < \frac{N}{N-2s}$. {Then,}
 \begin{align*}
\int_{\Omega} |\tilde{w}_\epsilon(x,0)|^\eta dx
&= \int_{B^+_\rho}\Big|\psi_{0}  \left(\frac{|x|}{\rho}\right)  u_{\epsilon}(x)\Big|^\eta dx \\
& \leq  \epsilon^{\frac{(N-2s)\eta}{2}} \int_{B^+_{\rho}}
  {(\epsilon^2 +|x|^2)^{\frac{-\eta(N-2s)}{2}}} dx \\
& = c \epsilon^{\frac{-\eta(N-2s)}{2}} \int_0^{\frac{\rho}{\epsilon}} \frac{\epsilon^{N}t^{N-1}}{(1+t^2)^{\frac{\eta(N-2s)}{2}}}dt\leq c \epsilon^{\frac{N-\eta(N-2s)}{2}} \int_0^{\frac{\rho}{\epsilon}} \frac{t^{N-1}}{t^{{\eta(N-2s)}}}dt\\
& = c \epsilon^{\frac{-\eta(N-2s)}{2}} \left(\frac{\rho}{\epsilon}\right)^{N-\eta(N-2s)},
\end{align*}
for an appropriate constant $c>0$ depending on $\rho$.
Therefore
\begin{equation}\label{asymp-3}
    \int_{\Omega} |\tilde{w}_\epsilon(x,0)|^\eta dx \leq c_3 \epsilon^{\frac{\eta(N-2s)}{2}},
\end{equation}
for some {constant} $c_3 >0$ depending on $\rho$.
Now, we choose $\rho $ so small such that $\rho< 2\epsilon$
and we find that
 \begin{align*}
 \int_{B^+_\frac{\rho}{2}} |\tilde{w}_\epsilon(x,0)|^{2^*_s-1}dx
&= \int_{B^+_\frac{\rho}{2}} \Big|\psi_{0}\left(\frac{|x|}{\rho}\right)  u_{\epsilon}(x)\Big|^{2^*_s-1}dx  = \epsilon^{\frac{-(N+2s)}{2}} \int_{B^+_\frac{\rho}{2}}
  \Bigg|\frac{1}{(\epsilon^2 +|x|^2)^{\frac{N-2s}{2}}}\Bigg|^{2^*_s-1} dx\\
& = c\epsilon^{N-\frac{(N+2s)}{2}} \int^{\frac{\rho}{2\epsilon}}_0
  \frac{\theta^{N-1}}{(1+\theta^2)^{\frac{N+2s}{2}}} d\theta \geq c \epsilon^{\frac{(N-2s)}{2}} \int^1_0\frac{\theta^{N-1}}{(1+\theta^2)^{\frac{N+2s}{2}}} d\theta .
\end{align*}
Therefore
\begin{equation}\label{asymp-4}
    \int_{B^+_\frac{\rho}{2}} |\tilde{w}_\epsilon(x,0)|^{2^*_s-1}dx \geq c_4\epsilon^{\frac{(N-2s)}{2}},
\end{equation}
for some {constant} $ c_4 >0$.
We can find some constants
$\eta_1, \eta_2 >0$ and $m>0$, $M>0$ such that $m\leq u_\lambda$ a.e. so that  the following inequalities hold
\begin{gather*}
 \lambda \Big( \frac{(a+b)^{1-q}}{1-q} - \frac{a^{1-q}}{1-q} - \frac{b}{a^q} \Big)
\geq -\frac{\eta_1 b^{\eta}}{c_3},\quad\text{for all }a \geq m, b\geq 0,
\\
\Big(\frac{(a+b)^{2^*_s}}{2^*_s} - \frac{a^{2^*_s}}{2^*_s}-a^{2^*_s-1}b\Big)
\geq \frac{b^{2^*_s}}{2^*_s},\quad\text{for all }a,b \geq 0,
\\
 \frac{(a+b)^{2^*_s}}{2^*_s}-\frac{a^{2^*_s}}{2^*_s} - {a^{2^*_s-1}}b
\geq \frac{b^{2^*_s}}{2^*_s} + \frac{\eta_2ab^{2^*_s-1}}{c_4m(2^*_s-1)},
\quad\text{for all } 0\leq a \leq M,\; b\geq 1.
\end{gather*}
  We know $u_{\lambda}$ is a positive weak solution of
$(P_\lambda^*)$ and $0<q<1$, so using
the above inequalities and \eqref{asymp-1}-\eqref{asymp-4}, we obtain
\begin{align*}
& J(u_\lambda +t\tilde{w_\epsilon})-J(u_\lambda)\\
& = J(u_\lambda +t\tilde{w_\epsilon})-J(u_\lambda)
- t\Big(\kappa_{s} \int_{\mathcal{C}_{\Omega}} y^{1-2s} \langle{\nabla u_{\lambda}, \nabla \tilde{w_\epsilon}} \rangle \,dx\,dy
 - \lambda \int_\Omega  u_{\lambda}^{-q}(x,0)\tilde{w_{\epsilon}}(x,0)   \,dx \\
 &- \int_\Omega u_{\lambda}^{2^*_s-1}(x,0)\tilde{w_{\epsilon}}(x,0)\,dx \Big)
\\
 & = \frac{t^2 \kappa_{s}}{2} \int_{\mathcal{C}_{\Omega}} y^{1-2s} |\nabla \tilde{w_{\epsilon}}(x,y)|^2 \,dx\,dy
- \frac{1}{2^*_s}\Bigg[ \int_{\Omega} |(u_\lambda+t{\tilde{w_{\epsilon}})(x,0)}|^{2^*_s}-|u_\lambda(x,0)|^{2^*_s}\,dx\Bigg] \\
&\quad +t\int_{\Omega}u_{\lambda}^{2^*_s-1}(x,0)\tilde{w_{\epsilon}}(x,0)\,dx   - \lambda \Bigg[ \int_{\Omega}\frac{|(u_\lambda+t\tilde{w_{\epsilon}})(x,0)|^{1-q}
 -|u_\lambda(x,0)|^{1-q})}{1-q}\,dx\\
 &\quad \quad- t \int_{\Omega}u_{\lambda}^{-q}(x,0)\tilde{w_{\epsilon}}(x,0) \,dx \Bigg]\\
& \leq \frac{t^2}{2}\Bigg(S^{\frac{N}{2s}}(s,N)+c_1 \Big(\frac{\epsilon}{\rho}\Big)^{N-2s}\Bigg)
 -\frac{t^{2^*_s}}{2^*_s}\int_{\Omega}|\tilde{w_{\epsilon}}(x,0)|^{2^*_s}dx
 + \frac{\eta_1t^{\eta}}{c_3}\int_{\Omega}|\tilde{w_{\epsilon}}(x,0)|^{\eta}dx\\
& \leq \frac{t^2}{2}\Bigg(S^{\frac{N}{2s}}(s,N)+c_1 \Big(\frac{\epsilon}{\rho}\Big)^{N-2s}\Bigg)
 -\frac{ t^{2^*_s}}{2^*_s}\Bigg(S^{\frac{N}{2s}}(s,N)- c_2 \Big(\frac{\epsilon}{\rho}\Big)^{N}\Bigg)
 +\eta_1t^{\eta} \epsilon^{\frac{\eta (N-2s)}{2}}
 \end{align*}
for each $0\leq t\leq 1/2$. Since we can assume $t \tilde{w_{\epsilon}} \geq 1$,
for each $t \geq 1/2$ and $|x| \leq \frac{\rho}{2}$, we have
 \begin{align*}
& J(w_\lambda+t\tilde{w_{\epsilon}})-J(w_\lambda)\\
 & \leq \frac{t^2}{2}\Bigg(S^{\frac{N}{2s}}(s,N)+c_1 \Big(\frac{\epsilon}{\rho}\Big)^{N-2s}\Bigg)
-\frac{t^{2^*_s}}{2^*_s}\int_{\Omega}|\tilde{w_{\epsilon}}(x,0)|^{2^*_s}dx
-\frac{\eta_2t^{2^*_s-1}}{c_4 (2^*_s-1)}\int_{|x|\leq \frac{\rho}{2}}
 |\tilde{w_{\epsilon}}(x,0)|^{2^*_s-1} dx \\
  &\quad + \frac{\eta_1t^{\eta}}{c_3}\int_{\Omega}|\tilde{w_{\epsilon}}(x,0)|^{\eta}dx\\
 & \leq \frac{t^2}{2}\Bigg(S^{\frac{N}{2s}}(s,N)+c_1 \Big(\frac{\epsilon}{\rho}\Big)^{N-2s}\Bigg)
-\frac{t^{2^*_s}}{2^*_s-1}\Big(S^{\frac{N}{2s}}(s,N)-c_2 \Big(\frac{\epsilon}{\rho}\Big)^{N}\Big)
-\frac{\eta_2 t^{2^*_s-1}}{(2^*_s-1)} \epsilon^{\frac{(N-2s)}{2}}\\
 &\quad +\eta_1t^{\eta}\epsilon^{\frac{(N-2s)\eta}{2}}.
 \end{align*}
Now, we define a function $H_\epsilon : [0, \infty) \to \mathbb{R}$ by
$$
H_\epsilon(t)=\begin{cases}
\frac{t^2}{2}\Bigg(S^\frac{N}{2s}(s,N)+c_1\Big(\frac{\epsilon}{\rho}\Big)^{N-2s}\Bigg)
 -\frac{t^{2^*_s}}{2^*_s}\Bigg(S^{\frac{N}{2s}}(s,N)-c_2 \Big(\frac{\epsilon}{\rho}\Big)^{N}\Bigg)
 +\eta_1t^{\eta}
\epsilon^{\frac{(N-2s)\eta}{2}},\\
~~\text{for}~t\in[0,1/2)\\
\frac{t^2}{2}\Bigg(S^{\frac{N}{2s}}(s,N)+c_1 \Big(\frac{\epsilon}{\rho}\Big)^{N-2s}\Bigg)
-\frac{t^{2^*_s}}{2^*_s}\Bigg(S^{\frac{N}{2s}}(s,N)-c_2 \Big(\frac{\epsilon}{\rho}\Big)^{N}\Bigg)
-\frac{\eta_2 t^{2^*_s-1}}{(2^*_s-1)} \epsilon^{\frac{(N-2s)}{2}}\\
+\eta_1t^{\eta}\epsilon^{\frac{(N-2s)\eta}{2}}
~~,\;\text{for} ~t\in [1/2,\infty)
\end{cases}
$$
By some approximating computations, we can get that $H_\epsilon$ attains its maximum at
\[
t_{\epsilon} = 1-\frac{\eta_2\epsilon^{(N-2s)/2}}{(2^*_s-2)S^\frac{N}{2s}(s,N)}
+o \Bigg({\Big(\frac{\epsilon}{\rho}}\Big)^{\frac{(N-2s)}{2}}\Bigg)
\]
which states that
\begin{align*}
\sup\{J(u_\lambda+t\tilde{w}_\epsilon)-J(u_\lambda): t\geq 0 \}
&\leq \frac{s}{N}S^{\frac{N}{2s}}(s,N)-\frac{\eta_2\epsilon^{(N-2s)/2}}{(2^*_s-1)}
+o \Bigg({\Big(\frac{\epsilon}{\rho}}\Big)^{(N-2s)/2}\Bigg)\\
&< {\frac{s}{N}S^{\frac{N}{2s}}(s,N)}.
\end{align*}
This completes the proof.

\end{proof} \medskip
\begin{lemma} \label{lem5.2}
It holds that
 $\inf J(\mathcal{N}^-_{\lambda}) < J(u_\lambda)+\frac{s}{N} S^{\frac{N}{2s}}(s,N)$.
\end{lemma}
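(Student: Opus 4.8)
The plan is to combine Lemma \ref{5.1} with the correct choice of the test direction $\tilde w_\epsilon$ along the Nehari manifold $\mathcal N_\lambda^-$. First I would recall from Lemma \ref{ 3.2} that for the fixed direction $u_\lambda + \tilde w_\epsilon$ there is a unique $t_\epsilon^- > 0$ such that $t_\epsilon^-(u_\lambda + \tilde w_\epsilon) \in \mathcal N_\lambda^-$; this $t_\epsilon^-$ is exactly the point where the fiber map $\Phi_{u_\lambda+\tilde w_\epsilon}$ attains its local maximum (the larger of the two critical points). Consequently
\[
\inf J(\mathcal N_\lambda^-) \le J\big(t_\epsilon^-(u_\lambda + \tilde w_\epsilon)\big)
= \Phi_{u_\lambda + \tilde w_\epsilon}(t_\epsilon^-)
= \sup_{t \ge 0} J\big(t(u_\lambda + \tilde w_\epsilon)\big).
\]
So the task reduces to bounding $\sup_{t\ge 0} J(t(u_\lambda + \tilde w_\epsilon))$, and I would want to relate this to the quantity $\sup_{t\ge 0} J(u_\lambda + t\tilde w_\epsilon)$ already estimated in Lemma \ref{5.1}.

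The key point is that the two suprema agree, or at least that the first is dominated by the second. Here I would argue as follows: the function $t \mapsto J(t(u_\lambda+\tilde w_\epsilon))$ on $[0,\infty)$ is, by the structure of the fiber map $\Phi_{u_\lambda+\tilde w_\epsilon}$ (strictly decreasing near $0$ because of the singular term, then increasing, then decreasing to $-\infty$), maximized at $t_\epsilon^-$; on the other hand the function $s \mapsto J(u_\lambda + s\tilde w_\epsilon)$ passes through $J(u_\lambda)$ at $s = 0$ and, for the appropriate range of $\epsilon$, through the point $t_\epsilon^-(u_\lambda+\tilde w_\epsilon)$ at some $s$. More directly: since $u_\lambda \in \mathcal N_\lambda^+$ realizes a \emph{negative} infimum (Lemma \ref{new-1}), and $u_\lambda$ is a weak solution so the linear term in the expansion drops, one checks that
\[
\sup_{t\ge 0} J\big(t(u_\lambda+\tilde w_\epsilon)\big) \le \sup_{t\ge 0} J\big(u_\lambda + t\tilde w_\epsilon\big) + o(1),
\]
or one simply invokes that for $\epsilon$ small $t_\epsilon^-$ is close to $1$ (by the implicit-function-theorem argument as in Lemma \ref{3.5}(2)), so that $t_\epsilon^-(u_\lambda + \tilde w_\epsilon) = u_\lambda + \tilde w_\epsilon + o(1)$ and hence $J(t_\epsilon^-(u_\lambda+\tilde w_\epsilon)) \le J(u_\lambda + \tilde w_\epsilon) + o(1) \le \sup_{t\ge 0} J(u_\lambda + t\tilde w_\epsilon) + o(1)$. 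Feeding in Lemma \ref{5.1}, which gives $\sup_{t\ge 0} J(u_\lambda + t\tilde w_\epsilon) < J(u_\lambda) + \frac{s}{N} S^{N/2s}(s,N)$ with strict inequality and a quantitative gap of order $\epsilon^{(N-2s)/2}$, I would absorb the $o(1)$ error into this gap by choosing $\epsilon$ small enough, obtaining
\[
\inf J(\mathcal N_\lambda^-) \le \sup_{t\ge 0} J\big(u_\lambda + t\tilde w_\epsilon\big) + o\big(\epsilon^{(N-2s)/2}\big) < J(u_\lambda) + \frac{s}{N} S^{\frac{N}{2s}}(s,N),
\]
which is the claim.

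The main obstacle I anticipate is the careful justification that the sup over the ray $t(u_\lambda + \tilde w_\epsilon)$ can be controlled by the sup over the translate $u_\lambda + t\tilde w_\epsilon$ with an error that is genuinely $o(\epsilon^{(N-2s)/2})$ and not merely $o(1)$; this requires tracking the asymptotics of $t_\epsilon^- \to 1$ against the asymptotics in Lemma \ref{5.1}. An alternative, cleaner route that avoids this issue is to estimate $\sup_{t \ge 0} J(t(u_\lambda+\tilde w_\epsilon))$ directly by the same Taylor-expansion-and-cut-off computation used in Lemma \ref{5.1} — expanding $J(t(u_\lambda+\tilde w_\epsilon))$, using the algebraic inequalities for $(a+b)^p$, the estimates \eqref{asymp-1}–\eqref{asymp-4}, and the fact that $u_\lambda$ solves $(P_\lambda^*)$ with $u_\lambda \ge f > 0$ — and then maximizing the resulting one-variable function $H_\epsilon(t)$ exactly as before. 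Either way the arithmetic is routine given Lemma \ref{5.1}; the only subtlety is the bookkeeping of the lower-order terms, which must stay strictly below the threshold $\frac{s}{N} S^{N/2s}(s,N)$ for all small $\epsilon$.
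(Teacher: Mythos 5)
Your reduction to Lemma~\ref{5.1} via the scaling $t_\epsilon^-(u_\lambda+\tilde w_\epsilon)$ has a genuine gap, and the paper in fact routes around exactly this issue. The claim that $t_\epsilon^- \to 1$ as $\epsilon\to 0^+$ ``by the implicit-function-theorem argument as in Lemma~\ref{3.5}(2)'' does not apply here: in Lemma~\ref{3.5}(2) the perturbation is $u_2+\epsilon w$ with $w$ fixed, so the perturbation vanishes as $\epsilon\to 0$; in your setup the perturbed function is $u_\lambda+\tilde w_\epsilon$, and $\tilde w_\epsilon$ does \emph{not} vanish as $\epsilon\to 0$ — indeed $\kappa_s\int y^{1-2s}|\nabla\tilde w_\epsilon|^2\to S^{N/2s}(s,N)$. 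So $u_\lambda+\tilde w_\epsilon$ stays a bounded distance from $u_\lambda$, and there is no reason for the $\mathcal N^-_\lambda$ projection parameter of $u_\lambda+\tilde w_\epsilon$ to approach $1$ (recall $u_\lambda$ itself sits in $\mathcal N^+_\lambda$ at $t=1$, not $\mathcal N^-_\lambda$). Moreover, even if such convergence were granted, the resulting error is only $o(1)$, which does not survive the gap of size $O(\epsilon^{(N-2s)/2})$ in Lemma~\ref{5.1}; you yourself flag this, but then offer only a sketch of an alternative (``redo the Taylor expansion'') without carrying it out, so the proposal as written is incomplete.

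The paper's actual argument is cleaner and sidesteps the mismatch between the ray $t(u_\lambda+\tilde w_\epsilon)$ and the translate $u_\lambda+t\tilde w_\epsilon$: it never scales the sum at all. Instead it introduces $\alpha_1(t)=\Phi'_{u_\lambda+t\tilde w_\epsilon}(1)$ and $\alpha_2(t)=\Phi''_{u_\lambda+t\tilde w_\epsilon}(1)$, notes $\alpha_2(0)=\Phi''_{u_\lambda}(1)>0$ (as $u_\lambda\in\mathcal N^+_\lambda$) and $\alpha_2(t)\to-\infty$, hence $t_0:=\sup\{t:\alpha_2(t)\ge 0\}$ is finite and positive. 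Since $\alpha_1(t_0)>0$ and $\alpha_1(t)\to-\infty$, there is $t'>t_0$ with $\alpha_1(t')=0$, and then $\alpha_2(t')<0$, so $u_\lambda+t'\tilde w_\epsilon\in\mathcal N^-_\lambda$. This element is literally one of the points over which the supremum in Lemma~\ref{5.1} is taken, so $\inf J(\mathcal N^-_\lambda)\le J(u_\lambda+t'\tilde w_\epsilon)\le\sup_{t\ge0}J(u_\lambda+t\tilde w_\epsilon)<J(u_\lambda)+\tfrac{s}{N}S^{N/2s}(s,N)$, with no error terms to absorb. The missing idea in your proposal is precisely this intermediate-value construction of a translate in $\mathcal N^-_\lambda$, which collapses the problem to a direct application of Lemma~\ref{5.1}.
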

\begin{proof}
 Fixing sufficiently small $\epsilon >0$ as in {Lemma} $\ref{5.1}$, we
define functions $\alpha_1, \alpha_2: [0,\infty) \to \mathbb{R}$ by
\begin{gather*}
\begin{aligned}
\alpha_1(t)
&= \kappa_{s} \int_{\mathcal{C}_{\Omega}} y^{1-2s} |{\nabla u_{\lambda}+ t\tilde{w_\epsilon}}|^2 \,dx\,dy
 - \lambda  \int_\Omega |({u_{\lambda}+\tilde{w_{\epsilon}}})(x,0)|^{1-q}   \,dx
 -  \int_\Omega |({u_{\lambda}+\tilde{w_{\epsilon}}})(x,0)|^{2^*_s}\,dx ,
\end{aligned} \\
\begin{aligned}
\alpha_2(t)&= \kappa_{s} \int_{\mathcal{C}_{\Omega}} y^{1-2s} |{\nabla u_{\lambda}+ t\tilde{w_\epsilon}}|^2 \,dx\,dy
+\lambda q \int_\Omega |({u_{\lambda}+\tilde{w_{\epsilon}}})(x,0)|^{1-q}   \,dx\\
&\quad
-(2^*_s-1)\int_\Omega |({u_{\lambda}+\tilde{w_{\epsilon}}})(x,0)|^{2^*_s}\,dx .
\end{aligned}
\end{gather*} Denote $t_0 = \sup\{t \geq 0: \alpha_2(t)\geq 0 \}$. Then,
 $\alpha_2(0) = \Phi''_{u_\lambda}(1) >0$ and
$\alpha_2(t) \to -\infty$ as $t \to \infty$. which
imply that $0<t_0<\infty$.
As $\lambda \in (0, \lambda_*)$, we obtain $\alpha_1(t_0)>0$ and we can check that
$\alpha_1(t) \to -\infty$ as $t \to \infty$, so there exists
$t' \in (t_0, \infty)$ such that $\alpha_1(t')=0$.
It gives $\Phi''_{u_\lambda+t'\tilde{w_{\epsilon}}}(1)<0$, because $t'>t_0$
implies $\alpha_2(t')<0$. Hence, $ (u_\lambda+t'\tilde{w_{\epsilon}})\in \mathcal{N}^-_{\lambda}$
and Lemma $\ref{5.1}$   gives that
$$\inf J(\mathcal{N}^-_{\lambda}) \leq J(u_{\lambda}+t'\tilde{w_{\epsilon}})< J(u_\lambda)+\frac{s}{N} S^{\frac{N}{2s}}(s,N),$$
as required.
\end{proof} \medskip
\begin{proposition}
There exists $v_\lambda \in \mathcal{N}_{\lambda}^{-}$ satisfying
$J(v_\lambda) = \inf_{w\in \mathcal{N}_{\lambda}^{-}} J(w)$.
\end{proposition}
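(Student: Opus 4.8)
The plan is to run the direct method on $\mathcal{N}_\lambda^-$, recovering the missing compactness from the strict energy gap of Lemma \ref{lem5.2}. First I would fix a minimizing sequence $(w_k)_k \subset \mathcal{N}_\lambda^-$ with $J(w_k) \to c^- := \inf_{w \in \mathcal{N}_\lambda^-} J(w)$; since $J(w)=J(|w|)$ and replacing a function by the $s$-extension of the modulus of its trace does not increase the norm, I may take $w_k \ge 0$. By property $(P_2)$ of Lemma \ref{lem3.4} the sequence is bounded in $\mathcal{X}^s_{\sum_D}(\mathcal{C}_\Omega,dxdy)$, hence, up to a subsequence, $w_k \rightharpoonup v_\lambda$ weakly, $w_k(\cdot,0) \to v_\lambda(\cdot,0)$ strongly in $L^r(\Omega)$ for each $r\in[1,2^*_s)$ by the compact trace embedding and a.e. in $\Omega$, so $v_\lambda \ge 0$. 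Setting $W_k = w_k - v_\lambda$ and invoking the Brezis--Lieb lemma I would record $\|w_k\|^2 = \|v_\lambda\|^2 + \|W_k\|^2 + o(1)$ and $\int_\Omega |w_k(x,0)|^{2^*_s}\,dx = \int_\Omega |v_\lambda(x,0)|^{2^*_s}\,dx + \int_\Omega |W_k(x,0)|^{2^*_s}\,dx + o(1)$, and pass to a further subsequence so that $\|W_k\|^2 \to a^2 \ge 0$ and $\int_\Omega |W_k(x,0)|^{2^*_s}\,dx \to b^{2^*_s} \ge 0$, with $a^2 \ge S(s,N)\,b^2$ by \eqref{sobolev-ineq1}. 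Passing the Nehari identity of $w_k$ and the expression of $J(w_k)$ to the limit then gives $\Phi'_{v_\lambda}(1) + a^2 - b^{2^*_s} = 0$ and $c^- = J(v_\lambda) + a^2/2 - b^{2^*_s}/2^*_s$.

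Second I would show $v_\lambda \not\equiv 0$. If $v_\lambda \equiv 0$, the subcritical term drops out of the Nehari identity, so $a^2 = b^{2^*_s}$; combined with $a^2 \ge S(s,N) b^2$ and $a>0$ (forced by $\inf\{\|u\|:u\in\mathcal{N}_\lambda^-\}>0$) this yields $a^2 \ge S(s,N)^{N/2s}$, whence $c^- = a^2/2 - b^{2^*_s}/2^*_s = (s/N)a^2 \ge (s/N)S(s,N)^{N/2s}$. But Lemma \ref{lem5.2} together with $J(u_\lambda)=\inf J(\mathcal{N}_\lambda^+)<0$ (Lemma \ref{new-1} and Proposition \ref{u-lambda}) forces $c^- < J(u_\lambda) + (s/N)S(s,N)^{N/2s} < (s/N)S(s,N)^{N/2s}$, a contradiction. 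Hence $v_\lambda \not\equiv 0$, and Lemma \ref{ 3.2} supplies a unique $\bar t^* > 0$ with $\bar t^* v_\lambda \in \mathcal{N}_\lambda^-$ (and a unique $\bar t_* \in (0,\bar t^*)$ with $\bar t_* v_\lambda \in \mathcal{N}_\lambda^+$).

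Third I would prove $w_k \to v_\lambda$ strongly, i.e. $a = 0$. Assuming $a \ne 0$, I would run a trichotomy on the position of $\bar t^*$ relative to $1$ and on the sign of $a^2/2 - b^{2^*_s}/2^*_s$, parallel to Cases (i)--(iii) in the proof of Proposition \ref{u-lambda}: using the auxiliary map $g(t) = \Phi_{v_\lambda}(t) + a^2 t^2/2 - b^{2^*_s} t^{2^*_s}/2^*_s$ (for which $g(1)=c^-$ and $g'(1)=0$), the monotonicity pattern of $\Phi_{v_\lambda}$ from Lemma \ref{ 3.2}, the inequality $a^2 \ge S(s,N) b^2$, the uniform bound on $\|w\|$ over $\mathcal{N}_\lambda^+$ built into the definition of $\Lambda$, and crucially the strict inequality $c^- < J(u_\lambda) + (s/N)S(s,N)^{N/2s}$ of Lemma \ref{lem5.2}, each case leads to a contradiction. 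Therefore $a=0$, so $b=0$ and $w_k \to v_\lambda$ strongly in $\mathcal{X}^s_{\sum_D}(\mathcal{C}_\Omega,dxdy)$. Letting $k\to\infty$ in the Nehari identity and in $\Phi''_{w_k}(1)<0$ gives $\Phi'_{v_\lambda}(1)=0$ and $\Phi''_{v_\lambda}(1)\le0$; since $\Phi''_{v_\lambda}(1)=0$ would place $v_\lambda$ in $\mathcal{N}_\lambda^0=\{0\}$ by Corollary \ref{cor3.3}, contradicting $v_\lambda\not\equiv0$, I conclude $\Phi''_{v_\lambda}(1)<0$, i.e. $v_\lambda \in \mathcal{N}_\lambda^-$, and $J(v_\lambda) = \lim_k J(w_k) = c^- = \inf_{w\in\mathcal{N}_\lambda^-}J(w)$.

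The main obstacle is this third step: excluding the loss of compactness of the minimizing sequence. The hard part is the scenario where a fraction of the mass concentrates in a Talenti-type bubble; there the identity $c^- = J(v_\lambda) + a^2/2 - b^{2^*_s}/2^*_s$ and the Sobolev relation $a^2 \ge S(s,N) b^2$ only yield a contradiction because $c^-$ lies strictly below $J(u_\lambda) + (s/N)S(s,N)^{N/2s}$, which is precisely why the sharp bubble estimates behind Lemma \ref{5.1} and Lemma \ref{lem5.2} are indispensable.
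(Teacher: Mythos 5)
Your first two steps match the paper's: direct method, Brezis--Lieb decomposition with limits $a,b$, the Sobolev lower bound $a^2\ge S(s,N)b^2$, and ruling out $v_\lambda\equiv 0$ via the energy gap of Lemma \ref{lem5.2}. (Your version of that last step, using $J(u_\lambda)<0$, is actually cleaner than the paper's, which has a slip of writing $J(v_\lambda)$ where $J(u_\lambda)$ is meant.) The gap is in step three, the trichotomy. You propose to replicate Cases (i)--(iii) of Proposition \ref{u-lambda} verbatim --- splitting $t^*\geq 1$ by the sign of $a^2/2-b^{2^*_s}/2^*_s$ --- and you list the $\Lambda$-bound on $\mathcal N_\lambda^+$ among your tools. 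Two of these three cases do not transfer. In your Case (ii), the proof of Proposition \ref{u-lambda} concludes by trapping $a^2$ between the uniform bound on $\|w\|^2$ over $\mathcal N_\lambda^+$ that the definition of $\Lambda$ guarantees and the lower bound coming from $a^2/2<b^{2^*_s}/2^*_s$; but here the minimizing sequence lives in $\mathcal N_\lambda^-$, where by $(P_2)$ the norm is bounded only on energy sublevels and no analogue of the $\Lambda$-bound holds, so that squeeze is unavailable. In your Case (iii), the proof of Proposition \ref{u-lambda} uses $\Phi_{u_\lambda}(1)\geq\Phi_{u_\lambda}(t_*)$, which is correct because $t_*$ is the minimum of the fiber map on $(0,t^*]$; the analogous chain on $\mathcal N_\lambda^-$ would need $\Phi_{v_\lambda}(1)\geq\Phi_{v_\lambda}(t^*)$, but $t^*$ is the \emph{global maximum} of $\Phi_{v_\lambda}$, so this fails for $1\neq t^*$.

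The paper instead splits the case $t^*\geq 1$ by $b>0$ versus $b=0$, and uses the auxiliary map $Y=\Phi_{v_\lambda}+X$ (your $g$), the observation that $Y''(1)\le 0$ so $1$ is the maximizer of $Y$, the estimate $X(t')\geq(s/N)S(s,N)^{N/2s}$ at the maximizer $t'$ of $X$, the bound $J(t_*v_\lambda)\geq J(u_\lambda)$ (valid since $t_*v_\lambda\in\mathcal N_\lambda^+$), and Lemma \ref{lem5.2} to dispose of $b>0$; the case $b=0$, $a\neq 0$ is killed directly by computing $\Phi'_{v_\lambda}(1)=-a^2<0$ and $\Phi''_{v_\lambda}(1)\leq -a^2<0$ from the limiting Nehari relations, which contradicts $t^*\geq 1$ because $\Phi''_{v_\lambda}$ is strictly decreasing and so $\Phi''_{v_\lambda}(1)<0$ together with $\Phi'_{v_\lambda}(1)<0$ forces $1>t^*$. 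So the key energy gap you identified is indeed the crucial ingredient, but the case split and the specific contradictions differ from the $\mathcal N_\lambda^+$ proof, and the $\Lambda$-bound plays no role here; as written your step three would not close.
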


\begin{proof}
Let $(v_k)_k$ be a sequence in $\mathcal{N}^-_{\lambda}$ such that
$J(v_k) \to \inf J(\mathcal{N}^-_{\lambda})$ as $k \to \infty$. Then using
Lemma \ref{lem3.4}, we know that there {exists} $v_\lambda \in \mathcal{X}^{s}_{\sum_{D}}(\mathcal{C}_\Omega{,dxdy})$ such that
$v_k \rightharpoonup v_\lambda$ weakly in $\mathcal{X}^{s}_{\sum_{D}}(\mathcal{C}_\Omega{,dxdy})$ as $k\to \infty$
up to a subsequence if necessary. Set $z_k := v_k - v_\lambda$.
We claim that $v_k \to v_\lambda$ strongly in $\mathcal{X}^{s}_{\sum_{D}}(\mathcal{C}_\Omega{,dxdy})$. Assume that $\|z_k\|^2_{\mathcal{X}^{s}_{\sum_{D}}(\mathcal{C}_\Omega{,dxdy})} \to a^2$
and $\int_{\Omega}|z_k(x,0)|^{2^*_s}dx \to b^{2^*_s}$ as $k \to \infty$.
Using the Brezis-Lieb lemma, we obtain
\begin{equation} \label{eq5.1}
    \|v_\lambda\|^2_{\mathcal{X}^{s}_{\sum_{D}}(\mathcal{C}_\Omega{,dxdy})} +a^2 = \lambda\int_{\Omega}|v_\lambda(x,0)|^{1-q}dx
+ \int_{\Omega}|v_\lambda (x,0)|^{2^*_s}+b^{2^*_s}.
\end{equation}
We now claim that $v_\lambda $ is not identically zero in $\mathcal{C}_{\Omega}$. If $v_\lambda\equiv 0$ in $\mathcal{C}_\Omega$
and $a \neq 0$  then by \eqref{eq5.1} and definition of $S(s,N)$, we see that
\[
\inf J(\mathcal{N}^-_{\lambda}) = \lim_{k\to \infty} J(v_k) =
\frac{a^2}{2}-\frac{b^{2^*_s}}{2^*_s} \geq \frac{s}{N}S^\frac{N}{2s}(s,N)>0.
\]
Hence, Lemma $\ref{lem5.2}$ gives
$$ \frac{s}{N}S^\frac{N}{2s}(s,N) \leq \inf J(\mathcal{N}^-_{\lambda}) < J(v_\lambda)
+ \frac{s}{N}S^\frac{N}{2s}(s,N),$$
that is $J(v_\lambda) >0$.
This is a contradiction since $v_\lambda \equiv 0$.
Hence, the claim is proved.

Applying Lemma\ref{ 3.2}, we state that there exist
$t_*,\, t^*$, with
$0 < t_* < t^*$, such that
$\Phi'_{v_{\lambda}}(t_*)= \Phi'_{v_{\lambda}}(t^*) = 0$ and
$t_{*}v_{\lambda} \in \mathcal{N}^{+}_{\lambda}$, $t^*v_{\lambda} \in \mathcal{N}^{-}_{\lambda}$.
We consider the functions
$X,Y :(0,\infty) \to \mathbb{R}~~ \text{ defined by}$
\begin{equation}\label{eq11}
 X(t) = \frac{a^2t^2}{2}-\frac{b^{2^*_s}t^{2^*_s}}{2^*_s} \quad \text{and} \quad
 Y(t) = \Phi_{v_\lambda}(t)+ X(t).
\end{equation}
Then,  three possible cases {arise} as follows.\\
\textbf{Case (i)} $(t^* <1)$  We have $Y'(1)= \Phi'_{v_\lambda}(1)+ X'(1) = 0$, using
\eqref{eq11} and $Y'(t^*)= \Phi'_{v_\lambda}(t^*)+ X'(t^*)
= t^*(a^2-b^{2^*_s}{t^*}^{2^*_s-2}) \geq t^*(a^2-b^{2^*_s})>0$.
We can observe that $Y$ is increasing on $[t^*,1]$ and we have
\[
\inf J(\mathcal{N}^-_{\lambda}) = Y(1)> Y(t^*)
\geq J(t^* v_\lambda)+\frac{{t^*}}{2}(a^2-b^{2^*_s}) > J(t^* v_\lambda)
\geq \inf J(\mathcal{N}^-_{\lambda}),
\]
which is a contradiction.\\
\textbf{Case (ii)} $(t^* \geq 1$ \textbf{and} $b >0)$ Let ${t'}=(a^2/b^{2^*_s})^{\frac{1}{2^*_s-2}}$.
{It is easy to see} that $X$ attains its maximum at $t'$ and
$$
X(t')= \frac{a^2{t'}^2}{2}-\frac{b^{2^*_s}
{{t'}}^{2^*_s}}{2^*_s}
=\Big( {\frac{a^2}{b^{2^*_s}}}\Big)^{\frac{2^*_s}{2^*_s-2}}
\Big(\frac{1}{2}- \frac{1}{2^*_s} \Big) \geq S^{\frac{2^*_s}{2^*_s-2}}(s,N)\frac{s}{N}
= \frac{s}{N}S^\frac{N}{2s}(s,N).
$$
Also, $X'({t})= t(a^2-b^{2^*_s}t^{2^*_s-2}) >0 $ if $0<t<t'$
and $X'(t) <0$ if $t >{t'}$. Moreover, we know
$Y(1) = \max _{t>0} Y(t) \geq Y({t'})$ using the assumption
$\lambda \in (0, \lambda_*)$. If $t'\leq 1$, then we have
\[
\inf J(\mathcal{N}^-_{\lambda}) = Y(1)\geq Y(t')=J(t'v_\lambda)
+ X({t'}) \geq J(t_* v_\lambda)+\frac{s}{N}S^\frac{N}{2s}(s,N),
\]
which contradicts Lemma \ref{lem5.2}. Thus, we must have ${t'}>1$.
Since $Y'(t) \leq 0$ for $t\geq 1$, there holds
$\Phi''_{v_\lambda}(t) \leq -X'(t) \leq 0 $ for $1\leq t \leq {t'}$.
Then we have ${t'}\leq t_*$ or $t^*=1$.
If ${t'}\leq t_*$ then
\[
\inf J(\mathcal{N}^-_{\lambda})= Y(1) \geq  Y({t'})
=J({t'}v_\lambda)+ X({t'})
\geq J(t_*v_\lambda)+\frac{s}{N}S^\frac{N}{2s}(s,N),
\]
which {contradicts Lemma} \ref{lem5.2}. If $t^*=1$, then {$a^2=b^{2^*_s}$ by \eqref{eq5.1} and so}
\[
\inf J(\mathcal{N}^-_{\lambda})=Y(1)= J(v_\lambda)
+ \Big(\frac{a^2}{2}-\frac{b^{2^*_s}}{2^*_s}\Big)\geq J(v_\lambda)
+ \frac{s}{N}S^\frac{N}{2s}(s,N).
\]
This is again a contradiction.\\
\textbf{Case (iii)} ($t^* \geq 1$ \textbf{and} $b = 0$)  For this case,  we are left with the choice that
if $a \neq 0$, then $\Phi'_{v_\lambda}(1) = -a^2 <0$ and
$\Phi''_{v_\lambda}(1) = -a^2<0$ due to \eqref{eq5.1} which contradicts $t^* \geq 1$.
Thus, $a=0$ {and} $v_k \to v_\lambda$ strongly in ${\mathcal{X}^{s}_{\sum_{D}}(\mathcal{C}_{\Omega},dxdy)}$.
Consequently, $v_\lambda \in \mathcal{N}^-_{\lambda}$ and
$\inf J(\mathcal{N}^-_{\lambda})= J(v_\lambda)$.
\end{proof} \medskip
\begin{proposition}\label{prop5.4}
 $v_\lambda$ is a positive weak solution of $(P_\lambda^*)$.
\end{proposition}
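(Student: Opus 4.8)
The plan is to follow the scheme of Proposition~\ref{prop4.2}, but with one essential change: $v_\lambda$ only minimises $J$ over $\mathcal{N}_\lambda^-$, where it is a strict local \emph{maximum} of its own fibering map $\Phi_{v_\lambda}$ (since $\Phi''_{v_\lambda}(1)<0$), not a minimiser in the ambient space; hence the direct comparison $J(v_\lambda+\epsilon\zeta)\ge J(v_\lambda)$ need not hold, and one must instead compare energies \emph{after projecting back onto} $\mathcal{N}_\lambda^-$.

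First I would fix $\zeta\in C^\infty_c(\mathcal{C}_\Omega)$ and invoke Lemma~\ref{le05}: $v_\lambda\ge f=\beta\Psi_{1,s}$ a.e.\ in $\mathcal{C}_\Omega$. Since $\Psi_{1,s}=\mathcal{E}_s[\phi_{1,s}]\in C^\alpha(\overline{\mathcal{C}_\Omega})$ is strictly positive on $\Omega\times[0,\infty)$ (it is the extension of the positive first eigenfunction $\phi_{1,s}$, so the maximum principle applies), on the compact set $\mathrm{supp}\,\zeta$ there is $M'>0$ with $v_\lambda\ge M'$. Hence $v_\lambda+\epsilon\zeta\ge0$ for all $|\epsilon|$ small, and on a neighbourhood in $\Omega$ of $\{x:\zeta(x,0)\ne0\}$ the density $|(v_\lambda+\epsilon\zeta)(x,0)|^{1-q}$ coincides with the smooth function $(v_\lambda(x,0)+\epsilon\zeta(x,0))^{1-q}$, while off that set it equals $v_\lambda(x,0)^{1-q}$; this pointwise lower bound is what permits $\zeta$ of \emph{either} sign. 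In particular $\epsilon\mapsto\|v_\lambda+\epsilon\zeta\|^2_{\mathcal{X}^{s}_{\sum_{D}}(\mathcal{C}_\Omega,dxdy)}$, $\epsilon\mapsto\int_\Omega|(v_\lambda+\epsilon\zeta)(x,0)|^{1-q}dx$ and $\epsilon\mapsto\int_\Omega|(v_\lambda+\epsilon\zeta)(x,0)|^{2^*_s}dx$ are $C^1$ near $0$.

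Next, by Lemma~\ref{ 3.2} each such $\epsilon$ admits a unique $t_\epsilon>0$ with $t_\epsilon(v_\lambda+\epsilon\zeta)\in\mathcal{N}_\lambda^-$. Applying the implicit function theorem to the map $g$ of Lemma~\ref{3.5} at the point associated with $v_\lambda$, where $\partial_t g=\Phi''_{v_\lambda}(1)<0$, and composing with the three $C^1$ maps above, I would get that $\epsilon\mapsto t_\epsilon$ is $C^1$ near $0$ with $t_0=1$ (and $t_\epsilon(v_\lambda+\epsilon\zeta)$ genuinely lies in $\mathcal{N}_\lambda^-$, since $\Phi''_{v_\lambda+\epsilon\zeta}(t_\epsilon)\to\Phi''_{v_\lambda}(1)<0$). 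Now put $h(\epsilon)=J\big(t_\epsilon(v_\lambda+\epsilon\zeta)\big)$. Since $t_\epsilon(v_\lambda+\epsilon\zeta)\in\mathcal{N}_\lambda^-$ and $J(v_\lambda)=\inf J(\mathcal{N}_\lambda^-)$, we have $h(\epsilon)\ge h(0)$ for all small $|\epsilon|$, and as $h$ is differentiable at $0$ this forces $h'(0)=0$.

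Finally I would compute $h'(0)$ by the chain rule along $\gamma(\epsilon)=t_\epsilon v_\lambda+\epsilon t_\epsilon\zeta$, with $\gamma(0)=v_\lambda$ and $\gamma'(0)=t_0'\,v_\lambda+\zeta$; differentiating the smooth quadratic and critical parts of $J$ and the singular part (which is $C^1$ along $\gamma$ by the splitting above) and regrouping gives
\begin{align*}
0=h'(0)={}&t_0'\Big(\|v_\lambda\|^2_{\mathcal{X}^{s}_{\sum_{D}}(\mathcal{C}_\Omega,dxdy)}-\lambda\int_\Omega v_\lambda^{1-q}(x,0)\,dx-\int_\Omega v_\lambda^{2^*_s}(x,0)\,dx\Big)\\
&+\kappa_s\int_{\mathcal{C}_\Omega}y^{1-2s}\langle\nabla v_\lambda,\nabla\zeta\rangle\,dx\,dy-\lambda\int_\Omega v_\lambda^{-q}(x,0)\zeta(x,0)\,dx-\int_\Omega v_\lambda^{2^*_s-1}(x,0)\zeta(x,0)\,dx.
\end{align*}
The first bracket is $\Phi'_{v_\lambda}(1)=0$, because $v_\lambda\in\mathcal{N}_\lambda^-\subset\mathcal{N}_\lambda$; hence the remaining terms vanish for every $\zeta\in C^\infty_c(\mathcal{C}_\Omega)$, which is precisely the weak formulation of $(P_\lambda^*)$, and positivity follows from $v_\lambda\ge f>0$ in $\mathcal{C}_\Omega$. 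The main obstacle, relative to Proposition~\ref{prop4.2}, is exactly that $v_\lambda$ is a critical point of mountain-pass type rather than an ambient minimiser, which necessitates the projection step; the technical core is then to justify the differentiability of $t_\epsilon$ and of $h$ at $0$ despite the singular term $u^{-q}$ (where the lower bound $v_\lambda\ge M'$ on $\mathrm{supp}\,\zeta$ is the key point), after which the cancellation of the $t_0'$-term via $\Phi'_{v_\lambda}(1)=0$ renders the precise value of $t_\epsilon$ irrelevant.
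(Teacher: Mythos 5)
Your argument is correct and follows essentially the same scheme as the paper's: fix a test function $\zeta$, invoke Lemma~\ref{le05} to get $v_\lambda\ge M'>0$ on $\mathrm{supp}\,\zeta$ so that both signs of $\zeta$ and differentiability through the singular term are available, project $v_\lambda+\epsilon\zeta$ back onto $\mathcal{N}_\lambda^-$ via $t_\epsilon$ with $t_\epsilon\to1$, and exploit minimality on $\mathcal{N}_\lambda^-$ together with $\Phi'_{v_\lambda}(1)=0$. The one technical difference is in how the dependence on $t_\epsilon$ is eliminated: you differentiate $t_\epsilon$ (via the implicit function theorem and the $C^1$ dependence of the norm and integral terms on $\epsilon$) and then observe that the coefficient of $t_0'$ is $\Phi'_{v_\lambda}(1)=0$, whereas the paper avoids differentiating $t_\epsilon$ altogether by inserting the comparison $J(v_\lambda)\ge J(t_\epsilon v_\lambda)$ — valid because $t=1$ is a local maximum of $\Phi_{v_\lambda}$ — so that one works with the difference quotient $[J(t_\epsilon(v_\lambda+\epsilon\zeta))-J(t_\epsilon v_\lambda)]/\epsilon$, for which only continuity of $t_\epsilon$ is needed. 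Your route is slightly heavier (you must justify $C^1$-smoothness of $\epsilon\mapsto t_\epsilon$ and of $h$, which you do), the paper's route is slightly slicker, but the proofs are structurally the same and both are sound.
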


\begin{proof}
Let $\zeta \in C^{\infty}_c(\mathcal{C}_{\Omega})$. By
using Lemma \ref{le05},
since $f \in L^{\infty}(\mathcal{C}_\Omega) \cap \mathcal{C}^\alpha(\overline{\mathcal{C}_\Omega})$ for some $\alpha \in (0,1)$ we {there exists a} constant $k >0$ such that
$v_\lambda \geq {k}>0$ on $\operatorname{supp}(\zeta)$. Also, $t_{\epsilon} \to 1$
as $\epsilon \to 0+$, where $t_{\epsilon}$ is the unique positive
real number corresponding to $(v_\lambda+\epsilon \zeta)$ such that
$t_\epsilon (v_\lambda+\epsilon \zeta) \in \mathcal{N}^{-}_{\lambda}$.
Now, {Lemma \ref{3.5} gives}
\begin{align*}
0 & \leq \lim_{\epsilon \to 0}\frac{J(t_\epsilon(v_\lambda+\epsilon\zeta))
 - J( v_\lambda)}{\epsilon} \leq \lim _{\epsilon \to 0}
 \frac{J(t_{\epsilon}(v_\lambda+\epsilon\zeta)) - J(t_{\epsilon} v_\lambda)}{\epsilon}\\
&= \kappa_{s} {\int_{\mathcal{C}_{\Omega}}} y^{1-2s} \langle{\nabla v_\lambda, \nabla \zeta} \rangle \,dx\,dy
 - \lambda \int_\Omega  {v_{\lambda}}^{-q}(x,0) \zeta(x,0)\,dx
 - \int_\Omega {v_{\lambda}}^{2^*_s-1}(x,0) \zeta(x,0) dx.
  \end{align*}
Since $\zeta \in C^{\infty}_c(\mathcal{C}_{\Omega})$ is arbitrary, we conclude that
 $v_{\lambda}$ is positive weak solution of $(P_\lambda^*)$.
\end{proof} \medskip

\noindent
 \textbf{Proof of Therorem \ref{2}:} The assertion of Theorem \ref{2} is a direct consequence of Propositions~\ref{prop4.2}, \ref{prop5.4} and the fact that $u_\lambda(x,0)$ and $v_\lambda(x,0)$ {solve}  problem $(P_\lambda)$.

\section{Regularity of  weak solutions}

In this section, we shall discuss the regularity properties of weak solutions of $(P_\lambda^*)$. We start with the following lemma.

\begin{lemma} \label{lem6.1}
Let $w$ be a positive weak solution of $(P_\lambda^*)$
{and let $u \in \mathcal{X}^{s}_{\sum_{D}}(\mathcal{C}_\Omega{,dxdy})$. Then} $w^{-q}u \in L^{1}(\mathcal{C}_{\Omega})$ and 
\begin{gather*}
\kappa_{s} \int_{\mathcal{C}_{\Omega}} y^{1-2s} \langle{\nabla w(x,y), \nabla u (x,y)} \rangle \,dxdy
 - \int_\Omega \big(\lambda  w^{-q}(x,0)  + w^{2^*_s-1}(x,0)\big)u(x,0) \,dx = 0,
 \end{gather*}
 for each $u \in \mathcal{X}^{s}_{\sum_{D}}(\mathcal{C}_\Omega{,dxdy})$.
\end{lemma}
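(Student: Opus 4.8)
The plan is to prove the identity first for nonnegative $u$, approximating $u$ monotonically from below by compactly supported functions, and then to recover the general case by splitting $u=u^+-u^-$. So fix $u\ge 0$ in $\mathcal{X}^{s}_{\sum_{D}}(\mathcal{C}_\Omega,dxdy)$ and, by Lemma~\ref{lem2.6}, take $(w_m)_m$ with $0\le w_1\le w_2\le\cdots\le u$, each $w_m$ with compact support contained in $(\Omega\cup\sum_{\mathcal{N}})\times[0,\infty)$, and $w_m\to u$ strongly in $\mathcal{X}^{s}_{\sum_{D}}(\mathcal{C}_\Omega,dxdy)$; by continuity of the trace, $w_m(\cdot,0)\to u(\cdot,0)$ in $L^{2^*_s}(\Omega)$, hence, being monotone, $w_m(\cdot,0)\nearrow u(\cdot,0)$ a.e. in $\Omega$. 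Since $w>0$ in $\Omega$, the strong maximum principle for $(P^*_\lambda)$ (see \cite{MR4065090,MR4343111}) gives $w>0$ on $(\Omega\cup\sum_{\mathcal{N}})\times[0,\infty)$, and by the regularity theory recalled in the Introduction $w$ is continuous there, so $c_m:=\inf_{\operatorname{supp}(w_m)}w>0$. In particular $w^{-q}(\cdot,0)$ is bounded on $\operatorname{supp}(w_m)\cap(\Omega\times\{0\})$, so a routine mollification/density argument makes $w_m$ an admissible test function in the weak formulation, yielding
\[
\kappa_s\int_{\mathcal{C}_\Omega} y^{1-2s}\langle\nabla w,\nabla w_m\rangle\,dx\,dy
=\int_\Omega\big(\lambda w^{-q}(x,0)+w^{2^*_s-1}(x,0)\big)w_m(x,0)\,dx .
\]

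Now let $m\to\infty$. By the Cauchy--Schwarz inequality in $L^2(\mathcal{C}_\Omega,y^{1-2s}dxdy)$ and the strong convergence $w_m\to u$, the left-hand side tends to $\kappa_s\int_{\mathcal{C}_\Omega} y^{1-2s}\langle\nabla w,\nabla u\rangle\,dx\,dy$. Since $w(\cdot,0)\in L^{2^*_s}(\Omega)$, we have $w^{2^*_s-1}(\cdot,0)\in L^{(2^*_s)'}(\Omega)$, so the critical term converges to $\int_\Omega w^{2^*_s-1}(x,0)u(x,0)\,dx$. Finally $\int_\Omega w^{-q}(x,0)w_m(x,0)\,dx$ is nondecreasing in $m$, so by the monotone convergence theorem it converges to $\int_\Omega w^{-q}(x,0)u(x,0)\,dx\in[0,\infty]$; as the other two limits are finite, this one is finite as well. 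This shows simultaneously that $w^{-q}(\cdot,0)u(\cdot,0)\in L^1(\Omega)$ and that the asserted identity holds for every $u\ge0$.

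For an arbitrary $u\in\mathcal{X}^{s}_{\sum_{D}}(\mathcal{C}_\Omega,dxdy)$ one writes $u=u^+-u^-$ with $u^\pm\in\mathcal{X}^{s}_{\sum_{D}}(\mathcal{C}_\Omega,dxdy)$, applies the previous two steps to $u^+$ and $u^-$, and subtracts; in particular $w^{-q}(\cdot,0)u(\cdot,0)\in L^1(\Omega)$, which is the integrability claimed in the statement. I expect the main obstacle to be precisely the handling of the singular term: one cannot test directly against a general $\mathcal{X}^{s}_{\sum_{D}}$-function because $w^{-q}(\cdot,0)$ need not be integrable against it \emph{a priori}, and this is why Lemma~\ref{lem2.6} is essential rather than plain density of smooth functions --- the monotone approximation converts the uncontrolled limit $\int_\Omega w^{-q}(x,0)w_m(x,0)\,dx$ into a monotone one, and it is the finiteness of the remaining non-singular terms that forces its integrability.
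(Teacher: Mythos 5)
Your proposal follows the paper's argument essentially step by step: reduce to nonnegative $u$, approximate monotonically from below by compactly supported elements of $\mathcal{X}^{s}_{\sum_{D}}(\mathcal{C}_\Omega,dxdy)$ via Lemma~\ref{lem2.6}, pass to the limit using strong $\mathcal{X}^{s}_{\sum_{D}}$-convergence for the gradient and critical terms and the monotone convergence theorem for the singular one, and finally split a general $u$ as $u^+-u^-$. The only place you go beyond the paper is in justifying why each $w_m$ is an admissible test function: you invoke the strong maximum principle and interior/lateral continuity to get a positive lower bound for $w$ on $\operatorname{supp}(w_m)$, whereas the paper passes over this point tersely ("since each $u_m$ has compact support in $\mathcal{C}_\Omega$ and $w$ is a positive weak solution \dots we obtain \dots"), implicitly relying on the same mechanism (compare Propositions~\ref{prop4.2} and~\ref{prop5.4}, where the lower bound $u_\lambda\ge f$ from Lemma~\ref{le05} plays this role). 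So this is the same proof, stated a little more carefully at the point where the singular weight is controlled.
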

\begin{proof}
Let $w$ be a  weak solution of $(P_\lambda^*)$  and let
$u \in \mathcal{X}^{s}_{\sum_{D}}(\mathcal{C}_\Omega{,dxdy})$.
Consider first the case
when $u>0$ in  $\mathcal{C}_\Omega$. Then
by Lemma $\ref{lem2.6}$ there exists a sequence ${(u_m)_m \subset} \mathcal{X}^{s}_{\sum_{D}}(\mathcal{C}_\Omega{,dxdy})$ such that
$u_m \to u$ strongly in $\mathcal{X}^{s}_{\sum_{D}}(\mathcal{C}_\Omega{,dxdy})$, where each $u_m$ has compact support in
$\mathcal{C}_\Omega$ and $0 \leq u_1 \leq u_2 \leq \ldots$. Since each $u_m$ has compact support in $\mathcal{C}_{\Omega}$ and $w$ is a positive weak
solution of $(P_\lambda^*)$, we obtain
$$\kappa_{s} \int_{\mathcal{C}_{\Omega}} y^{1-2s} \langle{\nabla w(x,y), \nabla u_m (x,y)} \rangle \,dxdy =\int_\Omega \big(\lambda  w^{-q}(x,0)u_m(x,0) + w^{2^*_s-1}u_m(x,0)\big) \,dx,$$
for each~ $m\in \mathbb{N}.$
 Using the monotone convergence theorem, we obtain
 $$w^{-q}(x,0)u(x,0) \in L^{1}({\Omega})~\text{and}$$
$$\kappa_{s} \int_{\mathcal{C}_{\Omega}} y^{1-2s} \langle{\nabla w(x,y), \nabla u (x,y)} \rangle \,dxdy=\int_\Omega \big(\lambda  w^{-q}(x,0)u(x,0)\big) \,dx +\int_\Omega w^{2^*_s-1}(x,0)u(x,0)dx.$$

In the general case, $u = u^+ - u^-$ and $u^+, u^- \in \mathcal{X}^{s}_{\sum_{D}}(\mathcal{C}_\Omega{,dxdy})$.
The proof of the first part shows that the assertion of the lemma holds
for $u^+$ and $u^-$ and so for $u$. This concludes the proof.
\end{proof} \medskip

\begin{lemma}\label{lem6.2}
Let $w$ be a positive weak solution of $(P_\lambda^*)$. Then $w \in L^r(\mathcal{C}_\Omega)$ for each $r \in [1,\infty).$
\end{lemma}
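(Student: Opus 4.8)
The plan is to first prove that the trace $u:=w(\cdot,0)\in H^{s}_{\sum_D}(\Omega)$ belongs to $L^r(\Omega)$ for every $r\in[1,\infty)$ by a Brezis--Kato/Moser iteration based on the weak formulation in Lemma \ref{lem6.1} and the trace inequality \eqref{sobolev-ineq}, and then to lift this to $w\in L^r(\mathcal{C}_\Omega)$ using the decay of the $s$-extension $w=\mathcal E_s[u]$. A useful preliminary observation: by Lemma \ref{le05} one has $w\ge f=\beta\Psi_{1,s}$ a.e.\ in $\mathcal{C}_\Omega$, hence $u\ge\beta\phi_{1,s}$ on the trace; since $\phi_{1,s}\in C^{\alpha}(\overline\Omega)$ is positive in $\Omega$ and vanishes on $\partial\Omega$ at rate $d(x,\partial\Omega)^{s}$ with $qs<1$, the singular datum satisfies $u^{-q}\le\beta^{-q}\phi_{1,s}^{-q}\in L^{p_0}(\Omega)$ for some $p_0>1$. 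Together with $u\in L^{2^*_s}(\Omega)$, this is all the input the iteration needs.

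For $\beta\ge1$ and $L>0$ set $w_L:=\min\{w,L\}$, $u_L:=\min\{u,L\}$ and test Lemma \ref{lem6.1} with $\varphi:=w\,w_L^{2(\beta-1)}$; this is admissible because $|\nabla\varphi|\le(2\beta-1)L^{2(\beta-1)}|\nabla w|$ gives $\varphi\in\mathcal{X}^{s}_{\sum_D}(\mathcal{C}_\Omega,dxdy)$, its trace is $u\,u_L^{2(\beta-1)}$, and Lemma \ref{lem6.1} already ensures $u^{1-q}u_L^{2(\beta-1)}=w^{-q}(\cdot,0)\varphi(\cdot,0)\in L^1(\Omega)$. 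Computing $\nabla w\cdot\nabla\varphi$ separately on $\{w<L\}$ and $\{w\ge L\}$ yields the elementary bound
\[
\kappa_s\int_{\mathcal{C}_\Omega} y^{1-2s}\langle\nabla w,\nabla\varphi\rangle\,dxdy\;\ge\;\tfrac{2\beta-1}{\beta^2}\,\bigl\|w\,w_L^{\beta-1}\bigr\|^{2}_{\mathcal{X}^{s}_{\sum_D}(\mathcal{C}_\Omega,dxdy)}\;\ge\;c_\beta\Bigl(\int_\Omega|u\,u_L^{\beta-1}|^{2^*_s}\,dx\Bigr)^{2/2^*_s},
\]
where the last step is the embedding $\mathcal{X}^{s}_{\sum_D}\hookrightarrow L^{2^*_s}(\Omega)$ from \eqref{sobolev-ineq} (with $r=2^*_s$) and $c_\beta>0$. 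On the other side of the identity of Lemma \ref{lem6.1} I would bound the singular contribution by $\lambda\int_\Omega u^{1-q}u_L^{2(\beta-1)}\le\lambda\int_\Omega(1+u^{2\beta-1})\,dx$ (using $0<1-q<1$ and $u_L\le u$), and split the critical contribution as
\[
\int_\Omega u^{2^*_s}u_L^{2(\beta-1)}\,dx=\int_\Omega u^{2^*_s-2}(u\,u_L^{\beta-1})^{2}\,dx\;\le\;K^{2^*_s-2}\int_\Omega u^{2}u_L^{2(\beta-1)}\,dx+\Bigl(\int_{\{u>K\}}u^{2^*_s}\,dx\Bigr)^{2s/N}\Bigl(\int_\Omega|u\,u_L^{\beta-1}|^{2^*_s}\,dx\Bigr)^{2/2^*_s}.
\]
Since $u\in L^{2^*_s}(\Omega)$, one may fix $K=K(\beta)$ so large that $(\int_{\{u>K\}}u^{2^*_s})^{2s/N}\le c_\beta/2$, which absorbs the last term into the left-hand side. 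Consequently, whenever $u\in L^{2\beta}(\Omega)$ (so that $\int_\Omega u^{2\beta-1}$ and $\int_\Omega u^{2}u_L^{2(\beta-1)}\le\int_\Omega u^{2\beta}$ are finite), one obtains a bound on $\int_\Omega|u\,u_L^{\beta-1}|^{2^*_s}dx$ uniform in $L$, and letting $L\to\infty$ by monotone convergence gives $u\in L^{\beta 2^*_s}(\Omega)$. Starting from $\beta_0=2^*_s/2=N/(N-2s)>1$ (for which $u\in L^{2\beta_0}=L^{2^*_s}$) and iterating $\beta_{k+1}=\tfrac12\beta_k2^*_s$, the exponents $\beta_k2^*_s$ grow geometrically by the factor $N/(N-2s)>1$ and hence diverge, so $u\in L^{r}(\Omega)$ for every $r\in[1,\infty)$ ($\Omega$ bounded handling the small exponents).

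Finally, from $u\in L^{r}(\Omega)$ for all finite $r$ one passes to $w\in L^{r}(\mathcal{C}_\Omega)$ using the Poisson representation $w(x,y)=\int_\Omega\mathcal P_s(x,\xi,y)u(\xi)\,d\xi$ of the extension, with $\mathcal P_s\ge0$ and $\int_\Omega\mathcal P_s(x,\cdot,y)\,d\xi\le1$: Jensen gives $\|w(\cdot,y)\|_{L^{r}(\Omega)}\le\|u\|_{L^{r}(\Omega)}$, hence $\int_0^1\int_\Omega|w|^{r}<\infty$, while the far-field kernel estimate $\mathcal P_s(x,\xi,y)\le C\,y^{2s}(\operatorname{diam}\Omega+y)^{-(N+2s)}\le C'y^{-N}$ for $y\ge1$ (here the boundedness of $\Omega$ is used) yields $|w(x,y)|\le C'y^{-N}\|u\|_{L^1(\Omega)}$, so $\int_1^\infty\int_\Omega|w|^{r}<\infty$ since $Nr>1$; the relevant properties of the extension are in \cite{MR3023003,MR2354493}. (Alternatively, one can run a weighted Moser iteration directly on $\mathcal{C}_\Omega$ via the $y^{1-2s}$-weighted Sobolev inequality on $\mathbb R^{N+1}_+$ together with the boundary term.) The genuine difficulty is the critical exponent term: the Brezis--Kato absorption of the tail $\int_{\{u>K\}}u^{2^*_s}$ is exactly what keeps the iteration from stalling at $r=2^*_s$, just as in the classical Brezis--Kato lemma; by contrast the singular term is harmless, being of order $u^{1-q}$ with $1-q<1$, and only the crude lower bound $u\ge\beta\phi_{1,s}$ is used to make sense of it.
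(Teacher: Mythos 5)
Your argument is essentially the paper's proof: the same test function $\varphi=w\,(\min\{w,L\})^{2(\beta-1)}$ (the paper's $wu^{2}$ with $u=\min\{w^{\beta-1},R\}$), the same Brezis--Kato absorption of the critical term via the tail factor $\bigl(\int_{\{u>K\}}u^{2^*_s}\,dx\bigr)^{2s/N}$ and the trace inequality \eqref{sobolev-ineq}, and the same geometric gain $L^{2\beta}\Rightarrow L^{2^*_s\beta}$ starting from $\beta_0=2^*_s/2$ and iterated over the bounded $\Omega$. One caveat on the step where you lift $u\in L^r(\Omega)$ to $w\in L^r(\mathcal{C}_\Omega)$ (a step the paper leaves implicit behind ``Sobolev embedding''): the kernel you quote, $\mathcal P_s(x,\xi,y)\sim y^{2s}(|x-\xi|^2+y^2)^{-(N+2s)/2}$, is the Caffarelli--Silvestre Poisson kernel on all of $\mathbb R^{N+1}_+$, whereas here $w=\mathcal E_s[u]$ is the \emph{spectral} extension on the bounded cylinder, $w(x,y)=\sum_{j}\langle u,\phi_j\rangle\,\phi_j(x)\,\vartheta_s(\sqrt{\lambda_j}\,y)$ with a Bessel-type profile $\vartheta_s$ that decays \emph{exponentially} in $y$; your conclusion $\int_1^\infty\!\int_\Omega|w|^r\,dx\,dy<\infty$ therefore holds with room to spare, but the polynomial far-field bound you wrote is not the correct kernel estimate in this setting and should be replaced by the exponential one (or one can, as you also note, run the weighted iteration on the cylinder directly, which is closer to what the paper does).
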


\begin{proof}
Let $w$ is a positive weak solution {of $(P_\lambda^*)$.  We} claim that $w\in L^{2\beta}(\mathcal{C}_\Omega)$ implies that $w\in L^{2^*_{s}\beta}(\mathcal{C}_\Omega)$ for each $\beta \geq 1$. Suppose
that $w\in L^{2\beta}(\mathcal{C}_\Omega)$ for
some  $\beta \in[1,\infty)$. Fix $R\ge0$ and set $u= \min\{w^{\beta-1},R\}$.
Then, $wu, wu^2 \in \mathcal{X}^{s}_{\sum_{D}}(\mathcal{C}_\Omega{,dxdy})$. Using lemma \ref{lem6.1} and fixing a $k>0$, we obtain
\begin{equation*}
\begin{split}
&{\kappa_{s} \int_{\mathcal{C}_{\Omega}} y^{1-2s} {|\nabla  wu|^2}  \,dxdy
\leq \beta \kappa_{s} \int_{\mathcal{C}_{\Omega}} y^{1-2s} \langle{\nabla w(x,y), \nabla w(x,y)u^2 (x,y)} \rangle \,dxdy}\\
&= \beta\int_\Omega \big(\lambda  w^{-q}(x,0)+w^{2^*_s-1}(x,0)\big)w(x,0)u^2(x,0) \,dx\\
&\leq \lambda \beta \int_\Omega  w^{2\beta -q-1}(x,0)\,dx+ \beta \Bigg(\int_{w\leq k} w^{2\beta+2^*_s-2}(x,0)\,dx +\int_{w>k} w^{ 2^*_s-2}(x,0)w^2(x,0)u^2(x,0)\,dx\Bigg)\\
&\leq k_1 + \beta |\Omega|k^{2\beta+2^*_s-2} +k_2 \Bigg(\int_{w>k} w^{ (2^*_s-2)\frac{N}{2s}}(x,0)\,dx\Bigg)^{\frac{2s}{N}} \Bigg(\int_{w>k} (w^2(x,0)u^2(x,0))^{\frac{N}{N-2s}}\,dx\Bigg)^{\frac{N-2s}{N}}\\
&\leq k_1 + \beta |\Omega|k^{2\beta+2^*_s-2} +C \Bigg(\int_{w>k} w^{\frac{(2^*_s-2)N}{2s} }(x,0)\,dx\Bigg)^{2s/N} \Bigg (\kappa_{s} \int_{\mathcal{C}_{\Omega}} y^{1-2s} {|\nabla  wu|^2}  \,dxdy\Bigg),
\end{split}
\end{equation*}
where $k_1$, $k_2$ and C are positive constants independent of both $k$
 and $R$. We can choose {$k$} such that
$$C \Bigg(\int_{w>k} w^{\frac{(2^*_s-2)N}{2s} }(x,0)\,dx\Bigg)^{2s/N}\leq \frac{1}{2}.$$
Then we have,
$$\kappa_{s} \int_{w^{\beta-1 }\leq R} y^{1-2s} {|\nabla w^{\beta } |^2}  \,dxdy\\
\leq \kappa_{s} \int_{\mathcal{C}_{\Omega}} y^{1-2s} {|\nabla wu|^2}  \,dxdy\\
\leq 2(k_1 +\beta |\Omega|k^{2\beta +2^*_s-2}).$$
As $R \to \infty$ then we find
$$\kappa_{s} \int_{\mathcal{C}_{\Omega}} y^{1-2s} {|\nabla w^{\beta } |^2}  \,dxdy
\leq 2(k_1 +\beta |\Omega|k^{2\beta +2^*_s-2}).$$
This implies that $w^{\beta} \in \mathcal{X}^{s}_{\sum_{D}}(\mathcal{C}_\Omega{,dxdy})$ and therefore
the Sobolev embedding theorem implies that $w \in L^{2^*_s\beta}(\mathcal{C}_\Omega)$.
An inductive argument shows that  $w \in L^r(\mathcal{C}_\Omega)$ for each $1 \leq r < \infty$.
 \end{proof} \medskip

 \begin{lemma}\label{fin-1}
Let $w$ be a positive weak solution of $(P_\lambda^*)$. Then $w \in L^{\infty}(\mathcal{C}_\Omega)$.
\end{lemma}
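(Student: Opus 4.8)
The plan is to pass to the trace $u:=w(\cdot,0)$, upgrade its integrability to $L^{\infty}(\Omega)$ by linear regularity, and then recover the bound for $w$ on the cylinder by a maximum principle for the extension equation. First, observe that the proof of Lemma \ref{lem6.2} actually shows $w^{\beta}\in\mathcal{X}^{s}_{\sum_{D}}(\mathcal{C}_\Omega{,dxdy})$ for every $\beta\ge1$; applying the trace inequality \eqref{sobolev-ineq} gives $u^{\beta}\in L^{2^*_s}(\Omega)$, that is $u\in L^{2^*_s\beta}(\Omega)$ for all $\beta\ge1$, hence $u\in L^{r}(\Omega)$ for every $r<\infty$. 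In particular the nonsingular part $u^{2^*_s-1}$ of the right-hand side lies in $L^{p}(\Omega)$ for every $p$.

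Second, control the singular term. Since $w$ is a weak solution, Lemma \ref{lem6.1} shows that the Euler--Lagrange identity holds against every element of $\mathcal{X}^{s}_{\sum_{D}}(\mathcal{C}_\Omega{,dxdy})$, so in particular inequality \eqref{eq4.2} holds with $w$ in place of $u_1$; the argument of Lemma \ref{le05} then applies verbatim and yields $w\ge f=\beta\Psi_{1,s}$ a.e.\ in $\mathcal{C}_\Omega$. Taking $y=0$ gives $u\ge\beta\phi_{1,s}$ on $\Omega$, whence $0\le\lambda u^{-q}\le\lambda\beta^{-q}\phi_{1,s}^{-q}$. Since $\phi_{1,s}\in\mathcal{C}^{\alpha}(\overline{\Omega})$ is positive in $\Omega$ and, by the Hopf-type estimate, comparable to the boundary distance near $\partial\Omega$, the weight $\phi_{1,s}^{-q}$ is controlled; splitting $u=U_1+U_2$ with $(-\Delta)^{s}U_1=\lambda u^{-q}$ and $(-\Delta)^{s}U_2=u^{2^*_s-1}$ under the mixed boundary conditions, one gets $U_2\in\mathcal{C}^{\gamma}(\overline{\Omega})$ from the $L^{p}$-regularity result for \eqref{sm1} of \cite{MR4319042}, while $U_1$ is bounded because the Green operator of the spectral fractional Laplacian absorbs the mild boundary singularity of $\phi_{1,s}^{-q}$. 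Hence $u\in L^{\infty}(\Omega)$. The \textbf{main obstacle} is precisely this last point: a naive estimate only gives $u^{-q}\in L^{p}(\Omega)$ for $p<1/q$, which meets the hypothesis $p>\frac{N}{2s}$ of the cited theorem only for small $q$, so the general case requires exploiting the extra boundary decay of the spectral Green kernel (equivalently, a boundary-weighted form of the regularity estimate).

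Finally, $w=\mathcal{E}_s[u]$ solves the degenerate linear problem $-\mbox{div}(y^{1-2s}\nabla w)=0$ in $\mathcal{C}_\Omega$, $w=0$ on $\sum^{*}_{D}$, $\partial_\nu w=0$ on $\sum^{*}_{\mathcal{N}}$, $w(\cdot,0)=u$; since $w\ge0$, the maximum principle for this $A_2$-weighted operator forces its supremum to be attained on $\Omega\times\{0\}$ (it vanishes on $\sum^{*}_{D}$, has vanishing conormal derivative on $\sum^{*}_{\mathcal{N}}$, and decays as $y\to\infty$), so $\|w\|_{L^{\infty}(\mathcal{C}_\Omega)}\le\|u\|_{L^{\infty}(\Omega)}<\infty$. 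An alternative, self-contained route would avoid the external regularity theorem altogether by running a Moser iteration directly on the extension problem, exactly as in the proof of Lemma \ref{lem6.2} but tracking the dependence of the constants on the iteration exponent $\beta$ and absorbing the singular contribution $\lambda\beta\int_\Omega w^{2\beta-1-q}\,dx$ by interpolating $L^{2\beta-1-q}$ between $L^{1}$ and $L^{2\beta}$; the resulting recursion $\|u\|_{L^{2^*_s\beta}}\le (C\beta)^{c/\beta}\max\{1,\|u\|_{L^{2\beta}}\}$ iterates to $u\in L^{\infty}(\Omega)$, after which the same maximum-principle step finishes the proof.
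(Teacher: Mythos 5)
Your main line of argument hits a genuine gap that you yourself flag, and the paper avoids it entirely by a simpler device. In the paper's proof one passes to the truncation $(w-1)^{+}$ and observes that on the set $\{w\ge 1\}$ the singular term is simply bounded, $\lambda w^{-q}\le\lambda$; hence $(w-1)^{+}$ is a weak subsolution of the extension problem with right-hand side $\lambda+w^{2^*_s-1}(x,0)$, and since Lemma \ref{lem6.2} places $w(\cdot,0)^{2^*_s-1}$ in every $L^p(\Omega)$, the $L^\infty$ estimate of Theorem 3.10 in \cite{MR3023003} applies directly. No lower bound $w\ge f$, no boundary analysis of $\phi_{1,s}$, and no decomposition $u=U_1+U_2$ is required.

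Your decomposition route, by contrast, really does fail at the point you name: with $u\ge\beta\phi_{1,s}$ and $\phi_{1,s}\sim d(x,\partial\Omega)$ near the boundary, one only obtains $u^{-q}\in L^p(\Omega)$ for $p<1/q$, and the hypothesis $p>N/(2s)$ of \cite{MR4319042} is then met only when $q<2s/N$. The phrase ``the Green operator absorbs the mild boundary singularity'' is exactly the missing ingredient; it would require a boundary-weighted regularity estimate that is neither stated nor proved here, nor available among the paper's tools, so as written your main route covers only small $q$. Your alternative sketch (Moser iteration directly on the extension, noting the harmless non-negative exponent $2\beta-1-q$ after testing with $wu^2$) is sound and is in fact much closer in spirit to what the paper, and already the proof of Lemma \ref{lem6.2}, actually does: once multiplied by a power of $w$ the singularity is subcritical and no boundary-distance analysis is needed. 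Developing that iteration with explicit constant tracking, followed by the maximum-principle transfer $\|w\|_{L^\infty(\mathcal{C}_\Omega)}\le\|u\|_{L^\infty(\Omega)}$, would give a correct proof, but as submitted the principal argument is incomplete.
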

\begin{proof}
Suppose that $w$ is a positive weak solution of $(P_\lambda^*)$.
Then for $\lambda>0$ we have\newline
$\Big(\lambda ((w-1)^+)^{-q} + ((w-1)^+)^{2^*_s-1} \leq \lambda +  ((w-1)^+)^{2^*_s-1}\Big)$.
Thus
\begin{equation*}
\begin{split}
&\kappa_{s} \int_{\mathcal{C}_{\Omega}} y^{1-2s} \langle{\nabla (w-1)^+, \varphi} \rangle \,dx dy
= \int_{w\geq 1}y^{1-2s} \langle{\nabla (w-1), \varphi} \rangle \,dx dy\\
&\leq \int_{ w\geq1}\big(\lambda+w^{2^*_s-1}(x,0)\big)\varphi(x,0)\,dx,
\leq \int_{\Omega}\big(\lambda+w^{2^*_s-1}(x,0)\big)\varphi(x,0)\,dx
\end{split}
\end{equation*}
for each $0\leq \varphi \in C^{\infty}_0(\mathcal{C}_\Omega)$. Applying
Lemma \ref{lem6.2} and following proof of Theorem 3.10 of \cite{MR3023003}, we assert that $w \in L^{\infty}(\mathcal{C}_{\Omega})$.
\end{proof} \\

\noindent
 \textbf{Proof of Therorem \ref{3}:} The assertion of Theorem \ref{3} is a direct consequence of Lemma~\ref{fin-1} and equivalence of problems \eqref{1} and \eqref{p}.

\section{Acknowledgement}
T. Mukherjee acknowledges the support of  Start up Research Grant from DST-SERB, sanction no. SRG/2022/000524.

P. Pucci is a  member of the {\em Gruppo Nazionale per l'Analisi Ma\-te\-ma\-ti\-ca, la Probabilit\`a e le loro Applicazioni}
(GNAMPA) of the {\em Istituto Nazionale di Alta Matematica} (INdAM) and was partly supported by the INdAM -- GNAMPA Project
{\em Equazioni alle derivate parziali: problemi e mo\-del\-li} (Prot\_U-UFMBAZ-2020-000761).

\end{document}